\documentclass[12pt]{amsart}
\usepackage{amsmath}
\usepackage{amsthm}
\usepackage{amssymb}
\usepackage{tikz-cd}
\usepackage[T1]{fontenc}
\usepackage[utf8]{inputenc}
\usepackage[english]{babel}
\usepackage[colorlinks=true,linkcolor=red]{hyperref}
\usepackage[normalem]{ulem}
\usepackage{caption}

\usepackage{tikz}
\usetikzlibrary{shapes}
\usepackage[autostyle]{csquotes}
\usepackage[shortlabels]{enumitem}
\usepackage{xspace}
\usepackage{extarrows}
\usepackage{comment}

\makeatletter
\@namedef{subjclassname@2020}{%
  \textup{2020} Mathematics Subject Classification}
\makeatother


\usepackage[backend=biber, style=numeric, giveninits=true, maxbibnames=99]{biblatex}

\DeclareFieldFormat*{title}{\mkbibemph{#1}}
\DeclareFieldFormat*{citetitle}{\mkbibemph{#1}}
\DeclareFieldFormat{journaltitle}{#1}


\theoremstyle{definition}
\newtheorem{defn}{Definition}[section]
\newtheorem{example}[defn]{Example}
\newtheorem{remark}[defn]{Remark}


\theoremstyle{plain}
\newtheorem{lemma}[defn]{Lemma}
\newtheorem{prop}[defn]{Proposition}
\newtheorem{thm}[defn]{Theorem}
\newtheorem{cor}[defn]{Corollary}


\frenchspacing

\textwidth=13.5cm
\textheight=23cm
\parindent=16pt
\oddsidemargin=-0.5cm
\evensidemargin=-0.5cm
\topmargin=-0.5cm


\newcommand\restr[2]{{
  \left.\kern-\nulldelimiterspace 
  #1 
  \vphantom{\big|} 
  \right|_{#2} 
  }}

\newcommand{\F}{\mathcal{F}}
\newcommand{\Id}{\mathrm{Id}}
\DeclareMathOperator{\Aut}{\mathrm{Aut}}
\DeclareMathOperator{\Homeo}{\mathrm{Homeo}}

\newcommand{\G}{\mathbb{G}}
\renewcommand{\Bbb}{\mathbb}
\newcommand{\ord}{\mathrm{ord}}
\newcommand{\Fraisse}{Fra\"{i}ss\'{e}\xspace}
\newcommand{\Wazewski}{Wa\.{z}ewski\xspace}
\newcommand{\T}{\mathcal{T}}
\newcommand{\Kubis}{Kubi\'{s}\xspace}
\newcommand{\K}{\mathfrak{K}}
\newcommand{\dF}{\ddagger\mathcal{F}_P}
\newcommand{\sdF}{\sigma{\ddagger}\mathcal{F}_P}

\renewcommand{\L}{\mathcal{L}}
\DeclareMathOperator{\End}{\mathrm{End}}

\DeclareMathOperator{\osc}{\mathrm{osc}}
\DeclareMathOperator{\diam}{\mathrm{diam}}

\addbibresource{main.bib}
\renewbibmacro{in:}{}

\begin{document}


\baselineskip=17pt


\title[Projective fra\"{i}ss\'{e} limits and wa\.{z}ewski dendrites]{Projective fra\"{i}ss\'{e} limits and generalized wa\.{z}ewski dendrites}

\author{Alessandro Codenotti}
\address{A. Codenotti, Institut f\"{u}r Mathematische Logik und Grundlagenforschung, Universit\"{a}t M\"{u}nster, Einsteinstrasse 62, 48149 M\"{u}nster, Germany}
\email{acodenot@uni-muenster.de}

\author{Aleksandra Kwiatkowska}
\address{A. Kwiatkowska, Institut f\"{u}r Mathematische Logik und Grundlagenforschung, Universit\"{a}t M\"{u}nster, Einsteinstrasse 62, 48149 M\"{u}nster, Germany {\bf{and}} Instytut Matematyczny, Uniwersytet Wroc{\l}awski, pl. Grunwaldzki 2/4, 50-384 Wroc{\l}aw, Poland}
\email{kwiatkoa@uni-muenster.de}

\thanks{Funded by the Deutsche Forschungsgemeinschaft (DFG, German Research Foundation) under Germany’s Excellence Strategy EXC 2044–390685587, Mathematics Münster: Dynamics– Geometry–Structure and by CRC 1442 Geometry: Deformations and Rigidity.}
\subjclass[2020]{54D80, 54F15, 54F50}
\keywords{projective Fra\"{i}ss\'{e} limits, dendrites, topological graphs, Fra\"{i}ss\'{e} categories}

\begin{abstract}
We continue the study of projective \Fraisse limits of trees initiated by Charatonik and Roe and further continued by Charatonik, Kwiatkowska, Roe and Yang. We construct many generalized \Wazewski dendrites as  topological realizations of  projective \Fraisse limits of families of finite trees with (weakly) coherent epimorphisms. Moreover we use the categorical approach to \Fraisse limits developed by \Kubis to construct all generalized \Wazewski dendrites as topological realizations of \Fraisse limits of suitable categories of finite structures. As an application we recover a homogeneity result for countable dense sets of endpoints in generalized \Wazewski dendrites.
\end{abstract}
\maketitle

\maketitle

\section{Introduction}
Projective \Fraisse limits were introduced by Irwin and Solecki in \cite{Irwin_Solecki} by dualizing the classical construction of \Fraisse limits from model theory, as a tool to study the pseudo-arc. Panagiotopolous and Solecki
generalised their construction in \cite{A_combinatorial_model} by 
allowing strict subcollections of epimorphism, instead of considering all of them. They used this approach to study the Menger curve by approximating it with finite graphs with monotone epimorphisms. Projective \Fraisse limits have since been used to study various compact spaces, for example the Lelek fan by Barto\v{s}ov\'{a} and Kwiatkowska \cite{BK}, the Poulsen simplex by \Kubis and Kwiatkowska \cite{KK}, $P$-adic pseudo-solenoids (for a set of primes $P$)
by Barto\v{s} and \Kubis \cite{BaKu},
the universal Knaster continuum by Iyer \cite{Knaster} (which has independently been constructed from  a projective \Fraisse limit in  Wickman's PhD thesis \cite{Wi}) and the generic object for a new class of compact metric spaces named \emph{fences} by Basso and Camerlo \cite{BC}. Recently Charatonik, Kwiatkowska, Roe and Yang studied projective \Fraisse limits of trees with monotone and confluent epimorphisms, which are adaptations to topological graphs of corresponding well established notions from continuum theory, see \cite{CR}. 

Dendrites are one-dimensional continua which have been widely studied in topological dynamics, \cite{DuchesneMonod1}, \cite{DuchesneMonod2}, \cite{AbNa}, \cite{MaNa}, \cite{GMtreelike_compact_spaces}. By a result of Charatonik and Dilks \cite{CD}, for any $P\subseteq\{3,4,\ldots,\omega\}$, there exists a unique dendrite
$W_P$ such that: \begin{enumerate}
    \item every ramification point of $W_P$ has order in $P$;
    \item for every $p\in P$ the set of ramification points of $W_P$ of order $p$ is arcwise dense in $W_P$.
\end{enumerate}
The spaces $W_P$ are known as the (generalized) \Wazewski dendrites, they are universal for appropriate classes of dendrites and enjoy good homogeneity properties. Because of these regularity properties they are of particular interest among dendrites. Their homeomorphism groups have been studied by Duchesne \cite{Duchesne}, while the universal minimal flows of those groups were computed by Kwiatkowska \cite{kwiatkowskaUMFofWazewski}.

 Among the spaces constructed in \cite{CR} there is the \Wazewski dendrite $W_3$ (denoted by $D_3$ in their paper), obtained from the projective \Fraisse limit of the family of  finite trees with ramification points of order 3 and all monotone epimorphisms; we build on their work to realize many generalized \Wazewski dendrites $W_P$ for $P\subseteq \{3,4,5,\ldots,\omega\}$ as the topological realizations of projective \Fraisse limits of appropriately chosen families of finite trees and epimorphisms between them. 

In particular, we obtain the following theorem. See Definition \ref{def: families Fp and Gp}, the remarks following it, and Theorems \ref{thm: limit of G_P} and \ref{thm: limit of F_P} for details.
\begin{thm}\label{thm: main thm1}
    Let $P\subseteq\{3,4,\ldots,\omega\}$ be coinfinite. Then the \Wazewski dendrite $W_P$ can be constructed as the topological realization of the projective \Fraisse limit of finite trees with monotone maps with additional properties.
\end{thm}

By moving to the more general setting of \Fraisse categories developed in \cite{K} we remove the coinfiniteness assumption from the previous result and obtain the following general statement, see Theorems \ref{thm: limit of dF is W_P} \and \ref{thm: limit of dF with countable dense set of endpoints} for details.
\begin{thm}\label{thm: main thm2}
    Let $P\subseteq\{3,4,\ldots,\omega\}$ and let $E\subseteq W_P$ be a countable dense set of endpoints. Then the pair $(W_P,E)$ can be constructed as the topological realization of the limit of a \Fraisse category $\dF$ of pairs $(T,F)$ with projection-embedding pairs of maps, where $T$ is a finite tree and $F$ is a set of endpoints in $T$.
\end{thm}

Moreover we show in Theorem \ref{thm: sequence is Fraisse if limit is WP-prespace} that any sequence in the category $\dF$ whose limit has as topological realization $W_P$, must be a \Fraisse sequence for $\dF$.
 
 A \Fraisse theoretic approach to the study of \Wazewski dendrites has already been employed successfully by Kwiatkowska in \cite{kwiatkowskaUMFofWazewski}, in which a \Fraisse structure $A_P$ with $\Aut(A_P)\simeq\Homeo(W_P)$ is constructed. While this approach is well suited to study properties of $\Homeo(W_P)$, it cannot be used to study the endpoints of $W_P$, since the injective \Fraisse limit construction produces a countable structure corresponding to the ramification points of $W_P$. In contrast, we use the uniqueness of \Fraisse sequences in the last section to recover a homogeneity result for the endpoints of $W_P$ from \cite{CD}, see Theorem \ref{thm: CDH}.

The paper is structured as follows. In Sections 2 and 3 we recall some background notions concerning projective \Fraisse limits and topological graphs. In Section 4 we introduce a new class of maps between finite trees and use it to prove Theorem \ref{thm: main thm1}. Finally in Section 5 we describe projection-embedding \Fraisse categories of finite trees that can be used to prove Theorem \ref{thm: main thm2}.

\section{Projective \Fraisse Limits} 

In this section we recall the necessary background on projective \Fraisse limits. Most definitions and results are from either \cite{Irwin_Solecki} or \cite{A_combinatorial_model}.

\begin{defn}
	Let $L$ be a first order language consisting of a set $\{R_i\}_{i\in I}$ of relation symbols, with $R_i$ of arity $n_i\in \Bbb N$, and a set $\{f_j\}_{j\in J}$ of function symbols, with $f_j$ of arity $m_j$. A \emph{topological $L$-structure} is a zero-dimensional, compact, metrizable topological space $A$  together with closed subsets $R_i^A\subseteq A^{n_i}$ for every $i\in I$ and continuous functions $f_j^A\colon A^{m_j}\to A$ for every $j\in J$. 
\end{defn}
\begin{defn}\label{def: epimorphism}
	Given two topological $L$-structures $A$ and $B$ we say that $g\colon B\to A$ is an \emph{epimorphism} if $g$ is a continuous surjection satisfying
	\begin{enumerate}
		\item For all $j\in J$, $f_j^A(g(b_1),\ldots,g(b_{m_j}))=g(f_j^B(b_1,\ldots,b_{m_j}))$ 
		\item For all $i\in I$, we have $R_i^A(a_1,\ldots,a_{n_i})$ if and only if there are $b_1\in g^{-1}(a_1),\ldots,b_{n_i}\in g^{-1}(a_{n_i})$ such that $R_i^B(b_1,\ldots,b_{n_i})$ holds.
	\end{enumerate}
\end{defn}

We can now give the definition of a projective \Fraisse family and its projective \Fraisse limit. We follow the construction from \cite{A_combinatorial_model}. The main difference compared to the original construction from \cite{Irwin_Solecki} is allowing a restricted class of epimorphisms between the structures under consideration.

\begin{defn}
	Let $\mathcal F$ be a class of finite topological $L$-structures with a fixed class of epimorphisms between them. We say that $\mathcal F$ is a \emph{projective \Fraisse family} if it satisfies the following conditions:
	\begin{enumerate}
		\item Up to isomorphism there are countably many $L$-structures in $\mathcal F$.
		\item The epimorphisms in $\mathcal F$ are closed under composition and the identity epimorphisms are in $\mathcal F$.
		\item For all $A,B\in\mathcal F$ there are $C\in\F$ and epimorphisms $f\colon C\to A$ and $g\colon C\to B$ in $\mathcal F$. In analogy with the classical \Fraisse limits this property is known as the \emph{joint projection property}.
		\item For all $A,B,C\in\mathcal F$ and epimorphisms $f\colon B\to A$ and $g\colon C\to A$ in $\mathcal F$, there exist $D\in\F$ and epimorphisms $h\colon D\to B$ and $k\colon D\to C$ in $\mathcal F$ such that $f\circ h=g\circ k$. This property is known as the \emph{projective amalgamation property}.
	\end{enumerate}
\end{defn}

Following \cite{A_combinatorial_model} we introduce $\mathcal F^\omega$ as the class of structures that can be approximated in $\mathcal F$, indeed the projective \Fraisse limit will be an element of $\mathcal F^\omega$ satisfying appropriate universality and homogeneity properties.

\begin{defn}
	Let $\mathcal F$ be a class of finite topological $L$-structures. We define $\mathcal F^\omega$ to be the class of all topological $L$-structures in $\F$ that are realized as the inverse limits of an $\omega$-indexed inverse system of topological $L$-structures with bonding epimorphisms in $\mathcal F$. More explicitly $F\in\mathcal F^\omega$ iff there is an inverse system of topological $L$-structures $\langle F_i,f^n_m\colon F_n\to F_m,i,n,m<\omega\rangle$ with $F_i,f^n_m\in\mathcal F$, such that $F=\varprojlim F_i$ (note in particular that $\mathcal F\subseteq\mathcal F^\omega$ by picking constant sequences).
	 
	Note that, given an inverse system $\langle F_i,f^n_m\colon F_n\to F_m,i,n,m<\omega\rangle$ in $\mathcal F$ as above, $F=\varprojlim F_i$ exists. Its underlying space is the inverse limit of the $F_i$ in the category of topological spaces, while for $f^n\colon F\to F_n$, the continuous maps obtained by construction of the limit of topological spaces, we have $$R_i^F(a_1,\ldots,a_{n_i})\iff \forall n R_i^{F_n}(f^n(a_1),\ldots,f^n(a_{n_i}))$$ and 
	$$f_j^F(a_1,\ldots,a_{m_j})=a\iff\forall n f_j^{F_n}(f^n(a_1),\ldots,f^n(a_{m_j}))=f^n(a).$$
	
	If $F=\varprojlim F_i\in\mathcal F^\omega$ and $E\in \mathcal F$, an epimorphism $f\colon F\to E$ is in $\mathcal F^\omega$ if and only if there exist $i\in\Bbb N$ and an epimorphism $f'\colon F_i\to E$ with $f=f'\circ f^i$. If $G$ is another object in $F^\omega$ an epimorphism $g\colon G\to F$ is in $\mathcal F^\omega$ iff $f^i\circ g$ is in $\mathcal F^\omega$ for all $i$. 
\end{defn}

\begin{thm}[\protect{\cite[Theorem 3.1]{A_combinatorial_model}}]\label{thm: projective limit exists}
	Let $\mathcal F$ be a projective \Fraisse family of topological $L$-structures. Then there exists a unique topological $L$-structure $\mathbb F\in\mathcal F^\omega$ satisfying the following properties:
	\begin{enumerate}
		\item For every $A\in\mathcal F$ there is an epimorphism $f\colon\mathbb F\to A$ in $\mathcal F^\omega$.
		\item For all $A,B\in\mathcal F^\omega$ and all epimorphisms $f\colon \mathbb F\to A$ and $g\colon B\to A$ in $\mathcal F^\omega$, there exist an epimorphism $h\colon \mathbb F\to B$ in $\mathcal F^\omega$ such that $f=g\circ h$.
	\end{enumerate}
Moreover, as pointed out in \cite[Lemma 2.2]{Irwin_Solecki} in terms of covers, we also have that $\mathbb F$ satisfies
\begin{enumerate}\setcounter{enumi}{2}
	\item For every $\varepsilon>0$ there is $A\in\mathcal F$ and an epimorphism $f\colon \mathbb F\to A$ in $\mathcal F^\omega$ such that $\mathrm{diam}(f^{-1}(a))\leq\varepsilon$ for every $a\in A$. 
\end{enumerate}
\end{thm}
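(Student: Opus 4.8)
The plan is to reconstruct the standard projective (dual) \Fraisse argument in four movements: build an inverse sequence whose limit is $\mathbb F$, extract a lifting lemma from the amalgamation property, run a dual back-and-forth to obtain universality, homogeneity and uniqueness, and finally read off property (3) from the metric geometry of inverse limits. For the first movement I would construct a \emph{\Fraisse sequence}. Using condition (1) I fix an enumeration of the countably many isomorphism types in $\F$ and, since all structures are finite, of the countably many epimorphisms between them, so that the collection of ``tasks'' is countable and can be dovetailed. I then build by recursion an inverse system $\langle F_n, p^{n+1}_n\colon F_{n+1}\to F_n\rangle$ in $\F$ enjoying two features: \emph{(cofinality)} every $A\in\F$ admits an epimorphism $F_n\to A$ for some $n$, arranged at the appropriate stage via the joint projection property; and \emph{(genericity)} for every $n$ and every epimorphism $g\colon B\to F_n$ in $\F$ there are $m\ge n$ and an epimorphism $q\colon F_m\to B$ with $g\circ q=p^m_n$, arranged by applying the projective amalgamation property to the composite $p^n_i$ and $g$ and taking the amalgam as the next term. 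Dovetailing guarantees each task is met eventually, and I set $\mathbb F=\varprojlim F_n$ with projections $f^n\colon\mathbb F\to F_n$, which lies in $\F^\omega$ by construction.

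The key technical step I would isolate is a \emph{lifting lemma}: for finite $A,B\in\F$ and epimorphisms $\phi\colon F_n\to A$ and $\psi\colon B\to A$, there are $m\ge n$ and an epimorphism $\chi\colon F_m\to B$ with $\psi\circ\chi=\phi\circ p^m_n$. I would prove this by first amalgamating $\phi$ and $\psi$ to obtain $D$ with maps $\alpha\colon D\to F_n$, $\beta\colon D\to B$ and $\phi\circ\alpha=\psi\circ\beta$, then applying the genericity of the sequence to $\alpha$ to pull some $F_m$ back through $D$ via a map $t\colon F_m\to D$ with $\alpha\circ t=p^m_n$, and finally setting $\chi=\beta\circ t$. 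Property (1) is then immediate from cofinality, since $\mathbb F\xrightarrow{f^n}F_n\to A$ is, by definition, an epimorphism in $\F^\omega$.

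The heart of the proof is property (2), the projective homogeneity, from which uniqueness follows by the usual two-sided variant (running the same lifting construction alternately against two objects satisfying (1) and (2) to produce mutually inverse epimorphisms). Given $A,B\in\F^\omega$ and epimorphisms $f\colon\mathbb F\to A$, $g\colon B\to A$ in $\F^\omega$, I would write $A=\varprojlim A_i$ and $B=\varprojlim B_j$, use that $f$ and the composites of $g$ with the projections of $B$ factor through finite levels of $\mathbb F$, and then iterate the lifting lemma along a dovetailing of the levels of $B$ to produce a coherent family of epimorphisms $\mathbb F\to B_j$ that is compatible with $g$, taking its limit $h\colon\mathbb F\to B$ with $f=g\circ h$. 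I expect the genuine obstacle to lie entirely here and to be twofold: first, organizing the back-and-forth so that the maps into successive finite levels of $B$ commute with the bonding maps of $B$ \emph{and} keep $g\circ h=f$ exactly, which requires threading the lifting lemma carefully through both the $A$-side and the $B$-side approximations; and second, verifying that the limiting map $h$ is an honest epimorphism in the sense of Definition \ref{def: epimorphism}, i.e.\ that the biconditional preservation of each relation $R_i$ survives the passage to the inverse limit. The function symbols pass to the limit routinely, but the ``$\exists$ preimage'' direction for relations must be checked by a compactness argument using that the fibers of the approximating epimorphisms are compact.

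Finally, property (3) is geometric. Since $\mathbb F=\varprojlim F_n$ sits inside $\prod_n F_n$ with each $F_n$ finite and discrete, distinct threads are separated by some coordinate, so the clopen partitions $\{(f^n)^{-1}(a):a\in F_n\}$ separate points; compactness then forces the $\mesh$ of these partitions to tend to $0$. Hence, given $\varepsilon>0$, it suffices to take $A=F_n$ and $f=f^n$ for $n$ large, so that every fiber $f^{-1}(a)=(f^n)^{-1}(a)$ has diameter at most $\varepsilon$.
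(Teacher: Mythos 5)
Your proposal is correct and follows essentially the same route as the paper's own (sketched) proof: both construct a \Fraisse sequence by dovetailing countably many tasks, using the joint projection property for cofinality and the projective amalgamation property for genericity, set $\mathbb F=\varprojlim F_n$, and obtain property (3) from the metric on $\prod F_n$. Your write-up is in fact more detailed than the paper's sketch (which defers the lifting lemma, the back-and-forth for property (2), and uniqueness to \cite{Irwin_Solecki} and \cite{A_combinatorial_model}), and the points you flag as the genuine obstacles --- coherence of the lifted maps with the bonding maps of $B$ while keeping $g\circ h=f$, and the compactness argument for relation preservation in the limit --- are exactly the details carried out in those references.
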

\begin{proof}
	We give a sketch of the argument, the details can be found in \cite{Irwin_Solecki} and \cite{A_combinatorial_model}. The idea is to obtain $\Bbb F$ as the inverse limit of a sequence $\langle F_i \mid i\in\Bbb N\rangle$ with maps $f^n_m\colon F_n\to F_m$ of elements and epimorphisms of $\F$. We will call this sequence a \emph{\Fraisse sequence} for $\F$. Since there are only countably many finite structures in $\F$ up to isomorphism, we can build two countable lists $\langle A_i\mid i\in\Bbb N\rangle$ and $\langle e_n\colon B_n\to C_n\rangle$ so that 
	\begin{itemize}
		\item Every structure in $\F$ is isomorphic to $A_i$ for some $i$.
		\item Every epimorphism type $e\colon B\to C$ in $\F$ appears infinitely many times in $\langle e_n\mid n\in\Bbb N\rangle$.
	\end{itemize} 
    Now set $F_0=A_0$. Inductively assume that $F_n$ together with the epimorphisms $f^n_m\colon F_n\to F_m$ for all $m<n$ have been defined. By the joint projection property we can find $H\in\F$ with epimorphisms $g\colon H\to A_{n+1}$ and $f\colon H\to F_n$. Since $H$ is finite there are only finitely many epimorphism types $s_i\colon H\to C_{n+1}$, $i=1,\ldots,k$. Using the projective amalgamation property, we can find $H_1'$ and epimorphisms $h_1\colon H_1'\to H$, $c_1\colon H_1'\to B_{n+1}$ such that $s_1\circ h_1= e_{n+1}\circ c_1$. Iterating this construction, for all $n\leq k$, we can find $H_n'$ with epimorphisms $h_n\colon H_n'\to H_{n-1}'$ and $c_n\colon H_n'\to B_{n+1}$ such that $s_n\circ h_1\circ\ldots\circ h_n=e_{n+1}\circ c_n$. Setting now $H'=H_k'\in\F$, $h=h_1\circ\ldots h_k\colon H'\to H$, $d_i=c_i\circ h_{i+1}\circ\ldots\circ h_k\colon H'\to B_{n+1}$, $i=1,\ldots,k$, we have that the following diagram commutes, that is $s_i\circ h=e_{n+1}\circ d_i$ for all $i$.
    \begin{center}
    \begin{tikzcd}
    F_n     & H' \arrow[d, "h"] \arrow[r, "d_1"] \arrow[rr, "d_2"', bend left] \arrow[rrrr, "d_k", bend left]                    & B_{n+1} \arrow[d, "e_{n+1}"] & B_{n+1} \arrow[d, "e_{n+1}"] & \cdots & B_{n+1} \arrow[d, "e_{n+1}"] \\
    A_{n+1} & H \arrow[r, "s_1"] \arrow[rr, "s_2", bend right] \arrow[rrrr, "s_k", bend right] \arrow[l, "g"] \arrow[lu, "f"'] & C_{n+1}                      & C_{n+1}                      & \cdots & C_{n+1}                     
    \end{tikzcd}
    \end{center}
    Once this is done set $F_{n+1}=H'$, and set $f^{n+1}_m=f^n_m\circ f\circ h$ for all $m<n$. It is easy to verify that $\Bbb F=\varprojlim F_i$ satisfies the first two properties of a projective \Fraisse limit of $\mathcal F$. The third one follows from the definition of the product metric on $\prod F_i$.
\end{proof}

From the proof of Theorem \ref{thm: projective limit exists} we extract the following important notion.

\begin{defn}
	Let $\F$ be a projective \Fraisse family of topological $L$-structures. A sequence $\langle F_n\mid n\in\Bbb N\rangle$ with epimorphisms $f^m_n\colon F_m\to F_n$ is called a \emph{\Fraisse sequence} for $\F$ if the following conditions hold:
	\begin{itemize}
		\item for every $A\in\F$ there exists $n\in\Bbb N$ such that there is an epimorphism $f\colon F_n\to A$ in $\F$,
		\item whenever $f\colon A\to F_m$ is an epimorphism in $\F$, there exists $n\in\Bbb N$ such that there is an epimorphism $g\colon F_n\to A$ in $\F$ with $f\circ g=f^n_m$. 
	\end{itemize}
\end{defn}

\section{Topological Graphs and Monotone Epimorphisms}
In this section we reproduce for convenience definitions and results about topological graphs that will be used in Section 4 and Section 5. They are mostly taken from \cite{CR}. 

Theorems \ref{thm: transitive edges}, \ref{thm: monotone maps preserve endpoints}, \ref{thm: limit is arcwise connected}, 
\ref{thm: limit is unicoherent}, \ref{thm: limit is dendrite} first appeared in the preprint of Charatonik-Roe \cite{CR-old}, but since that preprint is unpublished, with the permission of the authors, we include the proofs for completeness.
Those results will appear in \cite{CR}.

\subsection{Topological Graphs and epimorphisms}
\begin{defn}
A \emph{graph} $G$ is a pair $(V(G),E(G))$ consisting of a set of \emph{vertices} $V(G)$ and a set of \emph{edges} $E(G)\subseteq V(G)^2$ so that 
\begin{itemize}
	\item For all $a,b\in V(G)$, $\langle a,b\rangle\in E(G)\implies\langle b,a\rangle\in E(G)$.
	\item For all $a\in V(G)$, $\langle a,a\rangle\in E(G)$.
\end{itemize}
A finite graph $T$ is a {\it tree } if there are no pairwise distinct vertices $a_1,\ldots, a_n\in T$, $n\geq 3$, such that $\langle a_i, a_{i+1}\rangle \in E(T)$ for $i=1,2,\ldots, n-1$, and $\langle a_n, a_1\rangle \in E(T)$. 
We will use interchangeably $a\in V(A)$ and $a\in A$ and we call the edges $\langle a,b\rangle\in E(A)$ with $a\neq b$ \emph{nontrivial}.  
A \emph{topological graph} is a graph $G$ equipped with a compact, Hausdorff, zero-dimensional, metrizable topology on $V(G)$ so that $E(G)$ is a closed subspace of $V(G)^2$. Every topological graph is a topological $L$-structure for the language $L=\{R\}$ consisting of a single binary relation in an obvious way.  We will keep using the notation $\langle a,b\rangle\in E(G)$ rather than $aRb$ even though we will often think about topological graphs as topological $L$-structures. An epimorphism $f\colon A\to B$ of topological graphs is an epimorphism in the sense of Definition \ref{def: epimorphism}, so in particular it maps edges to edges.

Let us point out that in this context requiring topological graphs to be zero-dimensional is a natural property, since we are interested in inverse limits of finite discrete structures. We warn the reader that in the literature there are different definitions of  topological graphs, 
where edges are embedded as arcs into the topological structure.

If $A$ is a topological graph and $B\subseteq A$, $A\setminus B$ denotes the topological graph with $V(A\setminus B)=V(A)\setminus V(B)$ and for all $a_1,a_2\in V(A\setminus B)$, $$\langle a_1,a_2\rangle\in E(A\setminus B)\iff \langle a_1,a_2\rangle\in E(A).$$

A topological graph $G$ is said to have a \emph{transitive set of edges} if $$\langle a,b\rangle,\langle b,c\rangle\in E(G)\implies\langle a,c\rangle\in E(G).$$ It is called a \emph{prespace} if the edge relation $aRb\iff \langle a,b\rangle\in E(G)$ is an equivalence relation, equivalently a prespace is a topological graph with a transitive set of edges. In that case we call the quotient topological space $|G|=V(G)/R$ the \emph{topological realization} of $G$.
\end{defn}

\begin{example}\label{example: cantor}
Let $\mathcal C\subseteq [0,1]$ be the standard middle-thirds Cantor set. Consider the topological graph $G$ with $V(G)=\mathcal C$ and for distinct $x,y\in \mathcal C$ we have $\langle x,y\rangle\in E(G)$ if and only if $x,y$ are the endpoints of one of the intervals removed from $[0,1]$ in the construction of $\mathcal C$ (see the picture below for the first few stages of the construction, with the edges represented by solid black lines). 
\begin{center}
	\begin{tikzpicture}[thick]
	\draw (4/3,0) -- (8/3,0) (4,0) -- (8,0) (28/3,0) -- (32/3,0);
	\draw (12/27,0) -- (24/27,0) (84/27,0) -- (96/27,0) 
	(228/27,0) -- (240/27,0) (300/27,0) -- (312/27,0);
	\foreach \x in  {1,2,3,6,7,8}{
	\draw[shift={(12/9*\x,0)},color=black] (0pt,3pt) -- (0pt,-3pt);
	\draw[shift={(12/9*\x,0)},color=black] (0pt,0pt) -- (0pt,-3pt) node[below] {$\frac{\x}{9}$};
    }

	\draw[shift={(0,0)},color=black] (0pt,3pt) -- (0pt,-3pt) node[below] {$0$};
	\draw[shift={(12,0)},color=black] (0pt,3pt) -- (0pt,-3pt) node[below] {$1$};	

    \foreach \x in {1,2,7,8,19,20,25,26}{
    \draw[shift={(12/27*\x,0)},color=black] (0pt,3pt) -- (0pt,-3pt);
    \draw[shift={(12/27*\x,0)},color=black] (0pt,0pt) -- (0pt,-3pt) node[below] {$\frac{\x}{27}$};
    }
	\end{tikzpicture}
\end{center}

\noindent Then $G$ is a prespace, in particular $|G|\cong [0,1]$.
\end{example}

\begin{defn}\label{defn: arcs and more}
	A topological graph $G$ is called \emph{disconnected} if it is possible to partition $G$ into two nonempty disjoint closed sets $A,B$ such that whenever $a\in A$, $b\in B$, we have $\langle a,b\rangle\not\in E(G).$
	A topological graph is called \emph{connected} if it is not disconnected. A topological graph $A$ is called an \emph{arc} if it is connected, but for all $a\in A$, except at most two vertices, called the endpoints, $A\setminus\{a\}$ is disconnected. 
\end{defn}

\begin{example}
	The Cantor graph of Example \ref{example: cantor} is a connected  topological graph. In fact it is an arc. Note that while there is a unique topological arc, namely the interval $[0,1]$, there are many nonisomorphic topological graphs which are arcs. Indeed, there are finite, countable and uncountable topological graphs which are arcs.
\end{example}

The notion of a monotone map from continuum theory, where a continuous map $f\colon X\to Y$ between continua is called monotone if $f^{-1}(y)$ is connected for every $y\in Y$, was adapted to topological graphs by Panagiotopoulos and Solecki in \cite{A_combinatorial_model} (where those maps were called connected epimorphism instead) and studied in detail by Charatonik, Kwiatkowska, Roe, and Yang in \cite{CR}. The epimorphisms we will consider in the following sections will always be monotone so we will use various results from \cite{CR}, whose statements and proofs we include here for completeness. 

\begin{defn}
	An epimorphism $f\colon G\to H$ between topological graphs is called \emph{monotone} if $f^{-1}(h)$ is connected in $G$ for every $h\in H$.
\end{defn}

By Lemma 1.1 of \cite{A_combinatorial_model} we can equivalently require that $f^{-1}(C)$ is connected in $G$ whenever $C\subseteq H$ is closed connected in $H$. From this equivalent definition it is clear that if $f\colon G\to H$ is monotone and $H$ is connected then $G$ is too. The converse of this statement actually holds for arbitrary epimorphisms, an observation repeatedly used in both \cite{A_combinatorial_model} and \cite{CR}:
\begin{remark}\label{remark: connected image}
	Let $f\colon G\to H$ be an epimorphism between topological graphs. If $G$ is connected then $H$ is connected as well.
\end{remark}
\begin{thm}[\protect{\cite[Theorem 2.17]{CR}}] \label{thm: transitive edges}
	Let $\mathcal G$ be a projective \Fraisse family of graphs such that for every $G\in\mathcal G$ and $a,b,c\in G$ pairwise distinct with $\langle a,b\rangle,\langle b,c\rangle\in E(G)$, there is a graph $H\in\mathcal G$ and an epimorphism $f^H_G\colon H\to G$ so that whenever $p,q,r\in H$ are such that $f^H_G(p)=a,f^H_G(q)=b$ and $f^H_G(r)=c$, we have $\langle p,q\rangle\not\in E(H)$ or $\langle q,r\rangle \not\in E(H)$. Then, if $\G$ denotes the projective \Fraisse limit of $\mathcal G$, for every $a\in\G$ there is at most one $b\in\G\setminus\{a\}$ with $\langle a,b\rangle\in E(\G)$. In particular, the edge relation of $\G$ is an equivalence relation, that is, $\G$ is a prespace.
\end{thm}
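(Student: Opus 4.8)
The plan is to realize $\G$ as $\varprojlim F_i$ along a \Fraisse sequence, with canonical projections $f^n\colon\G\to F_n$, and to argue by contradiction. Recall from the description of relations in an inverse limit that $\langle x,y\rangle\in E(\G)$ if and only if $\langle f^n(x),f^n(y)\rangle\in E(F_n)$ for every $n$, and that an epimorphism of graphs maps edges to edges. Suppose toward a contradiction that some $a\in\G$ has two distinct neighbours, i.e.\ there are $b,c\in\G$ with $a,b,c$ pairwise distinct and $\langle a,b\rangle,\langle a,c\rangle\in E(\G)$. Since the projections $f^n$ separate the points of the inverse limit, I would first choose a single level $n_0$ at which the three images $a'=f^{n_0}(a)$, $b'=f^{n_0}(b)$, $c'=f^{n_0}(c)$ are pairwise distinct. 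As edges are sent to edges, $\langle b',a'\rangle,\langle a',c'\rangle\in E(F_{n_0})$, so $F_{n_0}$ contains a genuine length-two path whose middle vertex $a'$ is distinct from its endpoints.

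Next I would apply the hypothesis of the theorem to $G=F_{n_0}$ and the triple $(b',a',c')$ (playing the role of $(a,b,c)$, so that $a'$ is the middle vertex). This produces $H\in\mathcal G$ and an epimorphism $g=f^H_{F_{n_0}}\colon H\to F_{n_0}$ with the property that no $p,q,r\in V(H)$ with $g(p)=b'$, $g(q)=a'$, $g(r)=c'$ satisfy both $\langle p,q\rangle\in E(H)$ and $\langle q,r\rangle\in E(H)$. The crucial move is then to transport this \emph{edge-breaking} refinement back through the limit: since $H\in\mathcal G\subseteq\mathcal G^\omega$ and both the projection $f^{n_0}\colon\G\to F_{n_0}$ and $g\colon H\to F_{n_0}$ are epimorphisms in $\mathcal G^\omega$, the projection/homogeneity property (property (2) of Theorem \ref{thm: projective limit exists}, applied with $A=F_{n_0}$, $B=H$) yields an epimorphism $h\colon\G\to H$ in $\mathcal G^\omega$ with $f^{n_0}=g\circ h$.

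Finally I would set $p=h(b)$, $q=h(a)$, $r=h(c)$. The factorization gives $g(p)=f^{n_0}(b)=b'$, $g(q)=a'$, $g(r)=c'$, so in particular $p,q,r$ are distinct. On the other hand $h$, being an epimorphism, sends edges to edges, so from $\langle a,b\rangle,\langle a,c\rangle\in E(\G)$ we obtain $\langle q,p\rangle,\langle q,r\rangle\in E(H)$, that is, both $\langle p,q\rangle\in E(H)$ and $\langle q,r\rangle\in E(H)$, contradicting the defining property of $g$. Hence no vertex of $\G$ has two distinct neighbours. To conclude, $E(\G)$ is reflexive and symmetric by the definition of a graph; for transitivity, if $\langle x,y\rangle,\langle y,z\rangle\in E(\G)$ and the vertices are not trivially forced equal, then $x$ and $z$ are both neighbours of $y$, so the degree bound forces $x=z$ and $\langle x,z\rangle\in E(\G)$ holds by reflexivity. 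Thus $E(\G)$ is an equivalence relation and $\G$ is a prespace. I expect the pull-back step to be the main obstacle: one must recognize that the finite-stage refinement guaranteed at $F_{n_0}$ can be realized as an epimorphism defined on all of $\G$, which is exactly what the homogeneity/projection property of the \Fraisse limit supplies; everything else is bookkeeping about how edges behave under epimorphisms and under the inverse-limit projections.
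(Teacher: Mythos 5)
Your proof is correct and follows essentially the same route as the paper's: project the bad triple to a finite stage where its three points are separated, apply the hypothesis to obtain the edge-breaking refinement $H$, lift via the extension property (property (2) of Theorem \ref{thm: projective limit exists}) to an epimorphism $\G\to H$, and contradict the fact that epimorphisms send edges to edges. The only differences are cosmetic (explicit inverse-limit bookkeeping, relabelled middle vertex, and spelling out the ``in particular'' clause), so no further comment is needed.
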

\begin{proof}
Suppose on the contrary that there are three distinct vertices $a,b,c\in \mathbb G$ such that $\langle a,b\rangle, \langle b,c\rangle\in E(\mathbb G)$. Let a graph $G\in\mathcal G$ and an epimorphism $f_G\colon \mathbb G\to G$ be such that $f_G(a), f_G(b), \text{  and }f_G(c)$ are three distinct vertices of $G$. Then $\langle f_G(a),f_G(b)\rangle, \langle f_G(b),f_G(c)\rangle\in E(G)$, and thus, by our assumption, there is a graph
$H$ and an epimorphism $f^H_G\colon  H\to G$ such that for an epimorphism $f_H\colon \mathbb G\to H$ satisfying $f_G=f^H_G\circ f_H$ we have  $\langle f_H(a),f_H(b)\rangle\notin E(H)$ or $\langle f_H(b),f_H(c)\rangle\notin E(H)$. This contradicts the fact that $f_H$ maps edges to edges.
\end{proof}

\begin{defn}\label{def: splitting edges}
	Let $\T$ be a projective \Fraisse family of finite graphs. We say that $\T$ \emph{allows splitting edges} if for all $G\in\T$ and all distinct $a,b\in G$ with $\langle a,b\rangle\in E(G)$ the graph $H$ defined by $V(H)=V(G)\sqcup\{\ast\}$ and $$E(H)=(E(G)\setminus\{\langle a,b\rangle\})\cup\{\langle a,\ast\rangle,\langle\ast, b\rangle\}$$ and the two morphisms $H\to G$ that map $\ast$ to either $a$ or $b$ and are the identity otherwise, are in $\T$.
\end{defn}
Allowing to split edges is a basic closure property that will be satisfied by all the projective \Fraisse families considered in what follows. Despite its simplicity it has important consequences, such as the following lemma, and it will be used for some arguments in the next sections.
\begin{lemma}\label{lemma: splitting implies prespace}
	Suppose that $\T$ is a projective \Fraisse family of finite graphs that allows splitting edges. If $\G$ is the projective \Fraisse limit of $\T$, then $\G$ is a prespace.
\end{lemma}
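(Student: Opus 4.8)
The plan is to reduce everything to Theorem \ref{thm: transitive edges}: once I verify that the hypothesis of that theorem holds for $\T$, the conclusion that the edge relation of $\G$ is an equivalence relation, i.e.\ that $\G$ is a prespace, comes for free. So the entire task is to produce, for each $G\in\T$ and each pairwise distinct triple $a,b,c\in V(G)$ with $\langle a,b\rangle,\langle b,c\rangle\in E(G)$, a graph $H\in\T$ and an epimorphism $f^H_G\colon H\to G$ in $\T$ that breaks every preimage path $p$--$q$--$r$ lying over $a$--$b$--$c$.

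For a fixed such $G,a,b,c$ I would apply the splitting construction of Definition \ref{def: splitting edges} to the edge $\langle a,b\rangle$, obtaining $H$ with $V(H)=V(G)\sqcup\{\ast\}$ and $E(H)=(E(G)\setminus\{\langle a,b\rangle\})\cup\{\langle a,\ast\rangle,\langle\ast,b\rangle\}$. The crucial choice is which of the two available morphisms to use: I would take $f^H_G\colon H\to G$ to be the one sending $\ast\mapsto b$ (the common vertex of the two edges) and acting as the identity elsewhere. By the assumption that $\T$ allows splitting edges, both $H$ and this morphism lie in $\T$, so it is a legitimate witness for the theorem.

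With this choice the fibers are $(f^H_G)^{-1}(a)=\{a\}$, $(f^H_G)^{-1}(b)=\{b,\ast\}$ and $(f^H_G)^{-1}(c)=\{c\}$, so any triple $(p,q,r)$ over $(a,b,c)$ has $p=a$, $r=c$ and $q\in\{b,\ast\}$. If $q=b$ then $\langle a,b\rangle=\langle p,q\rangle\notin E(H)$, since that edge was deleted in the splitting; if $q=\ast$ then $\langle \ast,c\rangle=\langle q,r\rangle\notin E(H)$, since the new vertex $\ast$ is adjacent in $H$ only to $a$ and $b$. In both cases one of the two edges of the would-be path is absent, which is exactly the condition demanded in Theorem \ref{thm: transitive edges}. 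As this works for every $G,a,b,c$, the theorem applies to $\T$ and gives that $\G$ is a prespace.

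I do not expect a genuine obstacle, as this is really a short verification; the only point that needs care is the endpoint to which $\ast$ is mapped. Mapping $\ast\mapsto a$ instead would leave the path $\ast$--$b$--$c$ entirely inside $H$ and fail the hypothesis, so the whole argument hinges on routing $\ast$ through the shared vertex $b$ so that the preimage of $b$ splits the critical path.
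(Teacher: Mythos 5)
Your proof is correct and takes essentially the same route as the paper: both reduce the lemma to Theorem \ref{thm: transitive edges} and verify its hypothesis by splitting the edge $\langle a,b\rangle$. The only difference is that the paper splits that edge \emph{twice} (inserting two new vertices $a'$, $b'$ mapping to $a$ and $b$ respectively), which makes the verification symmetric and independent of any choice, whereas your single split with the deliberate choice $\ast\mapsto b$ is a slightly more economical variant that works for exactly the reason you point out -- the fiber over the middle vertex $b$ is what must be used to break both potential paths.
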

\begin{proof}
	We will show that $\T$ satisfies the hypothesis of Theorem \ref{thm: transitive edges}, from which the conclusion follows immediately. Fix $G\in\T$ and pairwise distinct $a,b,c\in G$ with $\langle a,b\rangle,\langle b,c\rangle\in E(G)$ and let $H$ be the graph obtained from $G$ by splitting the edge $\langle a,b\rangle$ twice. More formally we have $V(H)=V(G)\sqcup\{a',b'\}$ and $$E(H)=(E(G)\setminus\{\langle a,b\rangle\})\cup\{\langle a,a'\rangle,\langle a',b'\rangle,\langle b',b\rangle\},$$ with the epimorphism $f^H_G$ such that $f^H_G(a')=a$, $f^H_G(b')=b$ and $f^H_G$ is the identity otherwise. We have that $H$ and $f^H_G$ are in $\T$ since $\T$ allows splitting edges, but if $p,q,r\in H$ are such that $f^H_G(p)=a$, $f^H_G(q)=b$ and $f^H_G(r)=c$, then either $\langle p,q\rangle\not\in E(H)$, or $\langle q,r\rangle\not\in E(H)$. 
\end{proof}

\subsection{Dendrites}

We begin this section by recalling some facts concerning dendrites from general topology. 
A compact connected metrizable space is called a \emph{dendrite} if it is 
locally connected and contains no simple closed curve \cite[Definition 10.1]{Nadler}.
Equivalently dendrites are locally connected dendroids \cite[Page 218]{Nadler_hyperspaces}.
For more characterizations of dendrites see \cite[Section 10.1]{Nadler}. Therefore a dendrite $X$ is uniquely arcwise connected, that is between any $x\neq y\in X$, there is a unique arc. In a dendrite $X$ there are three types of points $x\in X$, based on how many connected components $X\setminus\{x\}$ has:
\begin{itemize}
	\item If $X\setminus \{x\}$ is connected, then $x$ is called an \emph{endpoint} of $X$.
	\item If $X\setminus \{x\}$ has two connected components, then $x$ is called a \emph{regular point} of $X$.
	\item If $X\setminus\{x\}$ has more than two connected components, then $x$ is called a \emph{ramification point} of $X$.
\end{itemize}
In all of those cases the (potentially infinite) number of connected components of $X\setminus\{x\}$ is called the \emph{order} or \emph{degree} of $x$, denoted by $\ord(x)$, and it coincides with the \emph{Menger-Urysohn order} of $x$ in $X$, that is with the maximum number of arcs in $X$ meeting only in $x$ \cite[§46, I]{Kuratowski}. For $P\subseteq \{3,4,5,\ldots,\omega\}$ the \emph{generalized \Wazewski dendrite} $W_P$ is a dendrite satisfying the following two properties:
\begin{itemize}
	\item If $x\in W_P$ is a ramification point, $\ord(x)\in P$.
	\item If $p\in P$, the set of ramification points in $W_P$ of order $p$ is \emph{arcwise dense}. This means that for all $x,y\in W_P$ distinct, there is a ramification point of order $p$ in the unique arc joining $x$ and $y$.
\end{itemize}
For a fixed $P$ those two properties characterize a unique dendrite up to homeomorphism \cite[Theorem 6.2]{CD}, so we can actually talk about \emph{the} generalized \Wazewski dendrite $W_P$.

These notions were adapted to topological graphs in \cite{CR}, from which the remaining definitions and results of this section are taken.

\begin{defn}
	A topological graph $A$ is called \emph{arcwise connected} if for all $a,b\in A$, there is a subgraph of $A$ containing $a$ and $b$ which is an arc. A topological graph $A$ is called \emph{locally connected} if every $a\in A$ has an open neighbourhood which is connected. 
 A topological graph $G$ is {\it hereditarily unicoherent} if for every two non-empty closed connected topological graphs, $P$ and $Q$, with $V(P) \subseteq V(G)$, $V(Q) \subseteq V(G)$, $E(P) \subseteq E(G)$, and $E(Q) \subseteq E(G)$, the intersection $P\cap Q=(V(P)\cap V(Q), E(P)\cap E(Q))$  is connected. The topological graph $G$ is {\it unicoherent} if the above holds for all $P$ and $Q$ such that $V(P)\cup V(Q)=V(G)$.
 
\end{defn}

\begin{defn}
	A topological graph $G$ is called a \emph{graph-dendroid} if it is hereditarily unicoherent and arcwise connected. A graph-dendroid $G$ is called a \emph{graph-dendrite} if it is locally connected. 
\end{defn}

\begin{defn}\label{def: endpoint in topological graph}
	Let $G$ be a topological graph. A vertex $x\in G$ is called an \emph{endpoint} of $G$ if whenever $H$ is a topological graph which is an arc and $f\colon H\to G$ is an embedding with $x\in f(H)$, then $x$ is in the image of an endpoint of $H$. Note that, when $G$ is an arc, this notion agrees with the one in the Definition \ref{defn: arcs and more}.
\end{defn}

\begin{defn}
Let $X$ be a graph-dendrite. We say that $x\in X$ is a \emph{ramification point} if $X\setminus\{x\}$ has at least 3 components, in which case we call the (possibly infinite) number of components of $X\setminus\{x\}$ the \emph{order} of $x$ in~$X$.
\end{defn}

\begin{thm}[\protect{\cite[Lemma 3.7]{CR}}]\label{thm: monotone maps preserve endpoints} If $f\colon G\to H$ is a monotone epimorphism between topological graphs and $G$ is an arc, then $H$ is an arc and the images of endpoints of $G$ are endpoints of $H$. 
\end{thm}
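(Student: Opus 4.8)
The plan is to reduce the statement to a structural fact about connected subsets of arcs, transported across $f$ by its monotonicity. First I would record that $H$ is connected: since $G$ is connected and $f$ is an epimorphism, this is immediate from Lemma \ref{lemma: connected image}. The main device is the following observation about fibers. For $h\in V(H)$ the set $f^{-1}(h)$ is closed and, by monotonicity, connected, and the restriction $f\colon G\setminus f^{-1}(h)\to H\setminus\{h\}$ is again a monotone epimorphism (surjectivity and the edge condition are inherited, and its fibers are exactly the fibers $f^{-1}(h')$ with $h'\neq h$, which are connected). Applying Lemma \ref{lemma: connected image} to this restriction, and in the other direction the equivalent formulation of monotonicity (preimages of connected sets are connected), I obtain the equivalence
\[
H\setminus\{h\}\text{ is connected}\iff G\setminus f^{-1}(h)\text{ is connected}.
\]
Thus locating the exceptional vertices of $H$ is converted into a question about which connected subsets of the arc $G$ have connected complement.

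The heart of the argument --- and the step I expect to be the main obstacle --- is the following claim about arcs: if $K\subsetneq V(G)$ is a nonempty closed connected subset of an arc $G$, then $G\setminus K$ is connected if and only if $K$ contains an endpoint of $G$. I would prove this using the betweenness (linear order) structure of an arc, the topological-graph analogue of the order on $[0,1]$ developed in \cite{CR}: an arc carries a linear order compatible with its topology and edge relation, whose extreme points are precisely its endpoints, the closed connected subsets are exactly the order-convex ones, and no edge joins two points separated by a third. Granting this, a closed connected $K$ is a convex set $[\,\min K,\max K\,]$; if $K$ omits both endpoints then $\min K$ and $\max K$ are interior, so $G\setminus K$ splits as $\{z<\min K\}\sqcup\{z>\max K\}$ into two nonempty relatively clopen pieces with no edge between them and is therefore disconnected, whereas if $K$ contains an endpoint then $G\setminus K$ is itself convex and hence connected. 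Establishing (or citing from \cite{CR}) this order--convexity correspondence in the zero-dimensional, possibly non-metrizable setting of topological graphs is the technical core of the proof.

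Combining the two ingredients finishes the argument. By the equivalence and the claim, $H\setminus\{h\}$ is connected precisely when $f^{-1}(h)$ contains an endpoint of $G$. Since $G$ has at most two endpoints and distinct fibers are disjoint, at most two vertices $h\in V(H)$ enjoy this property; for every other $h$ the set $G\setminus f^{-1}(h)$, and hence $H\setminus\{h\}$, is disconnected. Together with the connectedness of $H$ this is exactly the statement that $H$ is an arc, with endpoints contained in $\{f(e):e\text{ an endpoint of }G\}$. For the final assertion, if $a$ is an endpoint of $G$ then $a\in f^{-1}(f(a))$, so the fiber over $f(a)$ contains an endpoint, whence $G\setminus f^{-1}(f(a))$ and therefore $H\setminus\{f(a)\}$ is connected, making $f(a)$ an endpoint of $H$. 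Here I use that for an arc the endpoints in the sense of the embedding definition coincide with the vertices whose removal leaves the graph connected, as in \cite{CR}; the degenerate case in which $H$ is a single vertex is trivial and can be dispatched at the outset.
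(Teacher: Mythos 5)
You should first be aware that the paper contains no proof of this statement: Theorem \ref{thm: monotone maps preserve endpoints} is reproduced verbatim from \cite{CR} (their Theorem 3.4), and the surrounding text explicitly says the proofs of these quoted results are to be found in \cite{CR}. So there is no internal argument to compare yours against, and I can only assess the proposal on its own terms.

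Your reduction is sound. The equivalence ``$H\setminus\{h\}$ is connected iff $G\setminus f^{-1}(h)$ is connected'' is correct: one direction is the equivalent formulation of monotonicity quoted after the definition (preimages of connected sets are connected, stated with no closedness restriction, so it does apply to the open set $H\setminus\{h\}$), and the other follows by pulling back a separation along the restricted map. One small caution: you cannot literally cite Lemma \ref{lemma: connected image} for that second direction, since it is stated for topological graphs, which are compact, while $G\setminus f^{-1}(h)$ and $H\setminus\{h\}$ are not; but its proof (preimages of the two closed separating sets are closed, nonempty, and joined by no edge) uses no compactness, so rerunning it on the restriction is legitimate. The endgame bookkeeping --- at most two fibers contain an endpoint of $G$, hence all but at most two $h$ have $H\setminus\{h\}$ disconnected, and each $f(e)$ is non-disconnecting --- is also correct, including the degenerate single-vertex case, and it never actually needs the embedding-based definition of endpoint, only the arc definition from Definition \ref{defn: arcs and more}.

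The genuine issue is the structural claim you rest everything on: that an arbitrary topological-graph arc carries a closed linear order in which closed connected sets are exactly the convex ones, no edge joins two points separated by a third, the extreme points are the endpoints, and (as you implicitly use when $K$ contains an endpoint) arbitrary convex sets, not just closed ones, are connected. The claim and your derivation of the cut-structure statement from it are both correct, but this orderability theorem is the combinatorial analogue of the classical fact that a continuum with exactly two non-cut points is an ordered continuum, and proving it for topological graphs (existence of exactly two endpoints, each cut vertex separating into exactly two pieces, linearity and closedness of the resulting order) is work of essentially the same depth as the theorem you are trying to prove --- it is where all the content lives, as you yourself note. Deferring it to \cite{CR} is only non-circular if \cite{CR} develops the order theory of arcs independently of, and prior to, its Theorem 3.4; nothing in the paper (or in your proposal) certifies this. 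As it stands, then, you have a correct and cleanly executed reduction of the theorem to a structure theorem for arcs, not a complete proof.
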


\begin{proof}
 Denote the end vertices of $G$ by $a$ and $b$. We need to show that every vertex in $H\setminus \{f(a),f(b)\}$ disconnects $H$. Let $y\in H\setminus \{f(a),f(b)\}$; then, since
$G$ is an arc, the graph $G\setminus f^{-1}(y)$ is disconnected. Let $G\setminus f^{-1}(y)$ be the union of two disjoint graphs $G\setminus f^{-1}(y)=U\cup V$. Thus $H\setminus \{y\}=f(U)\cup f(V)$ and $f(U)\cap f(V)=\emptyset$, so $H\setminus \{y\}$ is disconnected as needed.
\end{proof}

Our goal is to construct a large class of dendrites from projective \Fraisse limits of finite graphs. The following results from \cite{CR} will be crucial. Lemma \ref{thm: limit is arcwise connected} is only stated in \cite{CR}, below we supplement a proof.

\begin{lemma}[\protect{\cite[Lemma 3.8]{CR}}] \label{thm: limit is arcwise connected}
Let $\Bbb T$ be the inverse limit of finite arcs with monotone epimorphisms. Then $\Bbb T$ is an arc. 
\end{lemma}
\begin{proof}
    Let $\langle I_n\mid n<\omega\rangle$ with monotone epimorphisms $f^m_n\colon I_m\to I_n$ be such that $\Bbb T=\varprojlim I_n$, and let $f_n\colon\Bbb T\to I_n$ be the induced 
    projections. Note that, since monotone epimorphisms map endpoints to endpoints, there are two points $a=(a_n)$ and $b=(b_n)$ in $\Bbb T$ such that $a_n,b_n$ are endpoints of $I_n$ for every $n$. In particular for any $x\in\Bbb T\setminus\{a,b\}$ 
    there is $N<\omega$ such that $f_k(x)\not\in\{a_k,b_k\}$ for all $k\geq N$. We want to show that every $x\in\Bbb T\setminus\{a,b\}$ disconnects $\Bbb T$. Let $N$ be as above for $x$ and let, for $k\geq N$, $U^k_a$ and $U^k_b$ be the 
    connected components of $I_k\setminus\{f_k(x)\}$ containing $a_k$ and $b_k$ respectively. Since $f_k$ is monotone, $f_k^{-1}(U^k_a)$ and $f_k^{-1}(U^k_b)$ are connected. Moreover for $k'\geq k$ we have $f_k^{-1}(U^k_a)\subseteq f_{k'}^{-1}(U^{k'}_a)$ (and analogously for $b$) so that $$\Bbb T\setminus\{x\}=\bigcup_{k\geq N}f_k^{-1}(U^k_a)\sqcup\bigcup_{k\geq N} f_k^{-1}(U^k_b)$$ is disconnected.
\end{proof}

\begin{thm}[\protect{\cite[Theorem 2.18]{CR}}]\label{thm: limit is unicoherent}
Let $\Bbb G$ be the inverse limit 
    of trees with monotone epimorphisms. Then $\Bbb G$ is hereditarily unicoherent.
\end{thm}
The proof below is an adaptation of the proof in Nadler \cite[Theorem 10.36]{Nadler} of the fact that the inverse limit of dendrites is hereditarily unicoherent.It is different from the proof given in \cite{CR}.
\begin{proof}
    Let $P,Q$ be closed connected subgraphs with $V(P)\subseteq V(\Bbb G)$, $V(Q)\subseteq V(\Bbb G)$, $E(P)\subseteq E(\Bbb G)$ and $E(Q)\subseteq E(\Bbb G)$. We want to show that $C=P\cap Q$ is connected. By assumption $\Bbb G=\varprojlim G_n$ is an inverse limit of trees with monotone epimorphisms $f^{n+1}_n\colon G_{n+1}\to G_n$. Let $P_n=f_n(P)$, $Q_n=f_n(Q)$ and $C_n=P_n\cap Q_n$, where $f_n\colon\Bbb G\to G_n$ is the canonical projection. If $C_n=\varnothing$ for some $n$ there is nothing to prove, so we can assume that $C_n\neq\varnothing$ for all $n$. Since every $G_n$ is a tree, hence hereditarily unicoherent, $C_n$ is connected and nonempty for every $n$. 
    It is now not hard to check that 
    $P\cap Q=\varprojlim f_n(P\cap Q)=
    \varprojlim (f_n(P)\cap f_n(Q))$, i.e. that
    $C=\varprojlim C_n$.
    This  implies that $C$ is connected, since it is the inverse limit of closed connected sets.
\end{proof}

\begin{thm}[\protect{\cite[Corollary 3.11+Observation 2.16]{CR}}] \label{thm: limit is dendrite}Let $\mathcal T$ be a projective \Fraisse family of trees with monotone epimorphisms, let $\G$ be its projective \Fraisse limit, then $\G$ is a graph-dendrite. Moreover if $\G$ has a transitive set of edges, then $|\G |$ is a dendrite.
\end{thm}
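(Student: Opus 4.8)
The plan is to use that, by construction of the projective \Fraisse limit, $\G=\varprojlim F_n$ is the inverse limit of an $\omega$-indexed system of finite trees $F_n$ whose bonding epimorphisms $f^n_m\colon F_n\to F_m$ are monotone, and to transfer the three defining properties of a dendrite from the finite trees to the limit. Write $f^n\colon\G\to F_n$ for the canonical projections; since $f^n=f^m_n\circ f^m$ is a composite of monotone epimorphisms, each $f^n$ is itself a monotone epimorphism, and every closed $A\subseteq\G$ satisfies $A=\bigcap_n(f^n)^{-1}(f^n(A))$ because the cylinders $(f^n)^{-1}(U)$ form a clopen basis. I would first dispose of connectedness: if $\G=P\sqcup Q$ were a disconnection, then $P,Q$ are clopen, hence $P=(f^n)^{-1}(P_n)$ and $Q=(f^n)^{-1}(Q_n)$ for some $n$ and a partition $V(F_n)=P_n\sqcup Q_n$; lifting any edge of $F_n$ between $P_n$ and $Q_n$ through the epimorphism $f^n$ would produce an edge of $\G$ between $P$ and $Q$, so there is none, and $F_n$ would be disconnected, contradicting that it is a tree.

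For arcwise connectedness, fix distinct $a,b\in\G$ and let $I_n\subseteq F_n$ be the unique arc between $f^n(a)$ and $f^n(b)$ in the tree $F_n$. The key point is that each bonding map restricts to a monotone epimorphism $I_m\to I_n$: the fibres $I_m\cap(f^m_n)^{-1}(x)$ are connected (monotonicity of $f^m_n$ together with hereditary unicoherence of trees), while $f^m_n(I_m)$ is a connected subgraph containing $f^n(a),f^n(b)$, so by Theorem \ref{thm: monotone maps preserve endpoints} it is an arc with endpoints $f^n(a),f^n(b)$, hence equals $I_n$ by uniqueness of arcs in trees. Consequently $I:=\varprojlim I_n$ is a subgraph of $\G$ containing $a$ and $b$, and being a monotone inverse limit of arcs it is itself an arc (the inverse-limit analogue of Theorem \ref{thm: monotone maps preserve endpoints}, proved by the same nested-connectedness argument). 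Thus any two points of $\G$ lie on a common arc.

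Hereditary unicoherence is where I expect monotonicity to be used most essentially. Let $A,B\subseteq\G$ be closed and connected. The images $f^n(A),f^n(B)$ are connected subtrees of $F_n$ (the image of a connected graph is connected, cf.\ Lemma \ref{lemma: connected image}), so $Y_n:=f^n(A)\cap f^n(B)$ is connected because trees are hereditarily unicoherent. I then pass to the saturations $(f^n)^{-1}(Y_n)=(f^n)^{-1}(f^n(A))\cap(f^n)^{-1}(f^n(B))$: these are connected, since the preimage of a connected set under a monotone map is connected (the equivalent formulation of monotonicity recorded after its definition), they decrease with $n$, and their intersection is exactly $A\cap B$. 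It remains to verify the purely topological lemma that a decreasing intersection of closed connected subgraphs of a compact topological graph is connected; this follows by separating a hypothetical disconnection of $A\cap B$ by a clopen cylinder and using closedness of $E(\G)$ together with compactness to push the disconnection down to some $(f^n)^{-1}(Y_n)$, contradicting its connectedness. Hence $A\cap B$ is connected.

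Finally, local connectedness is the part I expect to be the main obstacle, since an arbitrary monotone inverse limit of trees is only a dendroid a priori and the combinatorial notion of local connectedness for a zero-dimensional graph must be handled with care. Here I would combine monotonicity of the bonding maps with the vanishing-mesh property of Theorem \ref{thm: projective limit exists}(3): because monotone inverse limits of locally connected continua remain locally connected and the projections $f^n$ eventually have fibres of arbitrarily small diameter, $\G$ inherits local connectedness from the $F_n$, upgrading the dendroid to a dendrite. For the last assertion, a transitive set of edges makes the edge relation an equivalence relation, so $\G$ is a prespace and the quotient $q\colon\G\to|\G|$ is defined; its fibres are the edge-equivalence classes, which are complete subgraphs and hence connected, so $q$ is monotone. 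The dendrite structure of the graph $\G$ then transports across $q$ to its topological realization, which is the content of the correspondence between prespace graph-dendrites and topological dendrites; this yields that $|\G|$ is a dendrite.
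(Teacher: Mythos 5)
First, a point of comparison: this theorem is one of the results the paper imports from \cite{CR} without proof (the text explicitly says the statements from \cite{CR} are repeated ``for convenience'' with proofs deferred there), so there is no in-paper argument to measure you against. Your skeleton — realize $\G$ as an inverse limit of finite trees with monotone bonding epimorphisms and transfer connectedness, arcwise connectedness, hereditary unicoherence and local connectedness to the limit — is the route one would expect from \cite{CR}, and your connectedness, arcwise-connectedness and hereditary-unicoherence arguments are essentially sound, modulo the caveat that ``a monotone inverse limit of finite arcs is an arc'' is itself a nontrivial lemma of \cite{CR} which you only gesture at, and modulo the issue below, on which all three of these arguments silently depend.

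There are two concrete problems. First, your justification that the canonical projections $f^n\colon\G\to F_n$ are monotone is circular: writing $f^n=f^m_n\circ f^m$ expresses $f^n$ as a composite in which one factor is another canonical projection $f^m$, whose monotonicity is exactly what is in question; the projections from the limit are not finite composites of bonding maps. The correct statement is Lemma~\ref{lemma: maps from the limit are monotone} (that is, \cite[Theorem 3.7]{CR}), or one can argue directly that $(f^n)^{-1}(x)=\varprojlim_{m\geq n}\,(f^m_n)^{-1}(x)$ is an inverse limit of connected finite graphs and hence connected. Since you use monotonicity of the $f^n$ for the fibres in the arcwise-connectedness step and for the saturations in the unicoherence step, this must be repaired. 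Second, and more seriously, the local connectedness step fails as written: ``monotone inverse limits of locally connected continua remain locally connected'' is a theorem about continua, and $\G$ is not a continuum — as a topological space it is compact and zero-dimensional, hence totally disconnected, and connectedness and local connectedness in this theorem are the graph-theoretic notions of \cite{CR}. No appeal to vanishing mesh converts the continuum-theoretic fact into the graph-theoretic one. Fortunately the graph statement is much easier than you anticipate: once the projections are known to be monotone, the sets $(f^n)^{-1}(f^n(x))$, $n\in\omega$, form a neighborhood basis at $x$ consisting of clopen, graph-connected sets, which is precisely graph local connectedness. Finally, note that your closing paragraph (prespace plus graph dendrite implies $|\G|$ is a topological dendrite) is not an argument but a restatement of \cite[Observation 2.17]{CR}, which the theorem's own attribution cites; that is acceptable only if you are permitted to quote that result rather than prove it.
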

\begin{proof}
It is already proved in \cite[Proposition 2.1]{A_combinatorial_model} that $\mathbb{G}$ is connected and locally connected, and it is proved in \cite[Theorem 2.1]{A_combinatorial_model} that $|\mathbb{G}|$ is a Peano continuum (i.e. it is a locally connected compact connected space).

Peano continua are arcwise connected \cite[Theorem 8.23]{Nadler}.  It follows from Lemma \ref{thm: limit is arcwise connected} that $\mathbb G$ is arcwise connected (see also \cite[Proposition 3.10]{CR}), and it follows from Theorem \ref{thm: limit is unicoherent} that $\mathbb G$ is hereditarily unicoherent. The argument by contradiction 
can be used to show that $|\Bbb G|$ is hereditarily unicoherent whenever $\Bbb G$ is, which concludes the proof.
\end{proof}
Note that combining Lemma \ref{lemma: splitting implies prespace} and Theorem \ref{thm: limit is dendrite} we have that all projective \Fraisse families of trees with monotone maps that allow splitting edges have as limit a prespace whose topological realization is a dendrite. In particular this will be true for all projective \Fraisse families considered in the following sections. The  result below, which is contained in \cite[Proposition 2.1]{A_combinatorial_model}, 
will also be important.

\begin{lemma} 
\label{lemma: maps from the limit are monotone}
 Let $\T$ be a projective \Fraisse family of graphs with monotone epimorphisms. Let $\langle  T_i\mid i<\omega\rangle$ with maps $f^{i+1}_i\colon T_{i+1}\to T_i$ be a \Fraisse sequence for $\T$ and let $\mathbb T$ be its projective \Fraisse limit. Then for $G\in\T$ all epimorphisms $\mathbb T\to G$ in $\T^\omega$ are monotone. 
\end{lemma}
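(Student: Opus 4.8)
\section*{Proof proposal}

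The plan is to reduce the statement to two facts: that the limit projections $f^i\colon\mathbb T\to T_i$ are monotone, and that monotone epimorphisms are closed under composition. Recall that, by definition of $\T^\omega$, an epimorphism $f\colon\mathbb T\to G$ in $\T^\omega$ factors as $f=f'\circ f^i$ for some $i$ and some epimorphism $f'\colon T_i\to G$ of $\T$, which is therefore monotone by hypothesis. Thus, granting that $f^i$ is monotone, it suffices to check that the composite of two monotone epimorphisms is monotone, and this is immediate from the equivalent description of monotonicity recalled after its definition (an epimorphism is monotone iff preimages of connected sets are connected): for $g\in G$ we have $(f'\circ f^i)^{-1}(g)=(f^i)^{-1}((f')^{-1}(g))$, the set $(f')^{-1}(g)$ is connected because $f'$ is monotone, and its preimage under the monotone map $f^i$ is then connected as well.

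It remains to prove the crux of the argument, that each projection $f^i\colon\mathbb T\to T_i$ is monotone; I expect this to be the main obstacle. First I would observe that every composed bonding map $f^j_i\colon T_j\to T_i$ (for $j\ge i$) is monotone, being a composition of the monotone maps $f^{k+1}_k$, again by the composition fact above. Fixing $x\in T_i$, the fiber $(f^i)^{-1}(x)$ is, as a subgraph of $\mathbb T$, naturally the inverse limit $\varprojlim_{j\ge i}K_j$ of the fibers $K_j:=(f^j_i)^{-1}(x)$, with bonding maps the restrictions of $f^{j+1}_j$. Each $K_j$ is a nonempty finite connected subgraph of $T_j$ (nonempty and connected because $f^j_i$ is a monotone surjection), and the restricted bonding maps $K_{j+1}\to K_j$ are surjective. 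So the task reduces to the classical statement that an inverse limit of nonempty connected compact graphs with surjective bonding maps is connected.

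The delicate step is to establish this connectedness in the graph-theoretic sense of Definition \ref{defn: arcs and more}, and here is where I would spend the real effort. Arguing by contradiction, suppose $(f^i)^{-1}(x)=A\sqcup B$ with $A,B$ nonempty, closed, and with no edge of $E(\mathbb T)$ joining $A$ to $B$. Since $(f^i)^{-1}(x)$ is a zero-dimensional compactum realized as $\varprojlim K_j$, the clopen partition $\{A,B\}$ is pulled back from a partition $\{A',B'\}$ of some $K_{j_1}$; transporting it forward yields, for each $j\ge j_1$, a partition of $K_j$ into nonempty clopen sets $A'_j,B'_j$. I would then show by a compactness argument that for some $j$ there is no edge of $E(K_j)$ between $A'_j$ and $B'_j$: the sets $W_j$ of pairs $(a,b)$ with $a$ lying over $A'$, $b$ lying over $B'$, and $\langle f^j(a),f^j(b)\rangle\in E(T_j)$ form a decreasing sequence of closed subsets of the compact space $\bigl((f^i)^{-1}(x)\bigr)^2$, and if all were nonempty their intersection would produce, using that epimorphisms send edges to edges so that the edge relations are compatible along the system, an edge of $E(\mathbb T)$ between $A$ and $B$, a contradiction. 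Hence some $W_j$ is empty, so $A'_j\sqcup B'_j$ disconnects $K_j$, contradicting the connectedness of $K_j$. This shows $(f^i)^{-1}(x)$ is connected, so $f^i$ is monotone, and the lemma follows.
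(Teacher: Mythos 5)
Your proof is correct. Note, however, that the paper contains no proof of this lemma to compare against: it is one of the statements quoted verbatim from Charatonik and Roe (Theorem 3.7 of \cite{CR}), and the paper explicitly defers to \cite{CR} for the proofs of these quoted results. Your reconstruction --- factoring $f=f'\circ f^i$ via the definition of $\T^\omega$, reducing to monotonicity of the projections $f^i$, identifying the fiber $(f^i)^{-1}(x)$ with the inverse limit of the nonempty connected finite graphs $K_j=(f^j_i)^{-1}(x)$, and then establishing graph-theoretic connectedness of that limit by the decreasing-closed-sets compactness argument on the edge relation --- is sound in all its steps (including the subtle one, since the edge relation on $\mathbb T$ is by definition determined coordinatewise, which is exactly what makes $\bigcap_j W_j\neq\varnothing$ produce an edge between $A$ and $B$), and it is essentially the standard argument underlying the cited result.
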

\begin{proof}
Since epimorphisms $f^{i+1}_i$ are monotone, it is not hard to see that the projection epimorphisms $f^i\colon\mathbb{T}\to T_i$  are monotone. Any epimorphism  $\mathbb T\to G$ is a composition of a projection epimorphism $f_i$, for some $i$, and a monotone epimorphism from $T_i$ to $G$.
\end{proof}

\section{Weakly Coherent Epimorphisms and Generalized \Wazewski Dendrites.}
\noindent In this section, we introduce a new class of maps between trees, which we call \emph{(weakly) coherent} and study the relationship between weakly coherent points in a prespace and ramification points of its topological realization. Once this is done we will show how to construct many generalized \Wazewski dendrites as the topological realization of a projective \Fraisse limit.

\subsection{Weakly Coherent Epimorphisms}
\begin{defn}
	Let $A,B$ be finite trees and $f\colon B\to A$ a monotone epimorphism. Fix $a\in A$  with $\ord(a)=n\geq 3$ and enumerate as $A_0,\ldots,A_{n-1}$ the connected components of $A\setminus\{a\}$. We say that $a$ is a \emph{point of weak coherence} for $f$ if there exists $b\in f^{-1}(a)$ such that $m=\ord(b)\geq n$ and an injection $p\colon n\to m$ such that if $B_0,\ldots,B_{m-1}$ are the connected components of $B\setminus\{b\}$, then $f^{-1}(A_i)\subseteq B_{p(i)}$ for all $0\leq i\leq n-1$. In this case we call $b$ the \emph{witness} for the weak coherence of $f$ at $a$. We say that $f$ is \emph{weakly coherent} if every $a\in A$ with $\ord(a)\geq 3$ is a point of weak coherence for $f$.
	
	This notion can be strengthened by requiring $\ord(b)=\ord(a)$ and $p$ to be a bijection. In that case we say that $a$ is a \emph{point of coherence of $f$} and that $b$ is a \emph{witness} for the coherence of $f$ at $a$. We say that $f$ is \emph{coherent} if every $a\in A$ with $\ord(a)\geq 3$ is a point of coherence for $f$. 
\end{defn}

\begin{remark}
   Definitions of a weakly coherent and of a coherent epimorphism can be extended (in the obvious way) to epimorphisms between inverse limits of trees. 
\end{remark}

\begin{remark}\label{remark: witnesses of coherence}
	Let $A,B$ be inverse limits of trees, $f\colon B\to A$ a monotone epimorphism and $a\in A$ a ramification point. There exists at most one $b\in B$ witnessing the weak coherence of $f$ at $a$. Consider now a third tree $C$ with a monotone epimorphism $g\colon C\to B$. If $f\circ g$ is weakly coherent at $a$ with witness $c$, then $f$ is weakly coherent at $a$. Then $b=g(c)$ witnesses the weak coherence of $f$ at $a$, and $c$ witnesses the weak coherence of $g$ at $b$.
 If $f\circ g$ and $g$ are coherent, then $f$ is coherent.
\end{remark}

\begin{example}\noindent\label{example: monotone but not coherent}
	\begin{center}
		\begin{tikzpicture}[thick]
		\draw (-1,2) -- (0,1) (1,2) -- (0,1) (0,1) -- (0,0) 
		(-1,-1) -- (0,0) (1,-1) -- (0,0);
		
		\node [fill,circle,scale=0.5,label=below:$a_1$] at (0,0) {};
		\node [fill,circle,scale=0.5,label=$b$] at (-1,2) {};
		\node [fill,circle,scale=0.5,label=$c$] at (1,2) {};
		\node [fill,circle,scale=0.5,label=$a_2$] at (0,1) {};
		\node [fill,circle,scale=0.5,label=below:$d$] at (-1,-1) {};
		\node [fill,circle,scale=0.5,label=below:$e$] at (1,-1) {};
		
		\draw (4,0.5) -- (3,1.5) (4,0.5) -- (5,1.5) (4,0.5) -- (3,-0.5)
		(4,0.5) -- (5,-0.5);

		\draw [->] (1,0.5) -- (3,0.5) node[midway,above] {f};
		\node [fill,circle,scale=0.5,label=below:$a$] at (4,0.5) {};
		\node [fill,circle,scale=0.5,label=$b$] at (3,1.5) {};
		\node [fill,circle,scale=0.5,label=$c$] at (5,1.5) {};
		\node [fill,circle,scale=0.5,label=below:$d$] at (3,-0.5) {};
		\node [fill,circle,scale=0.5,label=below:$e$] at (5,-0.5) {};
		\end{tikzpicture}
	\end{center}
	The map $f$ defined by $f(a_1)=f(a_2)=a$ and mapping every other node to the node with the same name is a monotone epimorphism which is not weakly coherent at $a$.
\end{example}

\begin{example}\noindent
	\begin{center}
		\begin{tikzpicture}[thick]
		\draw (-1,1) -- (0,0) (1,1) -- (0,0)
		(-1,-1) -- (0,0) (1,-1) -- (0,0);
		
		\node [fill,circle,scale=0.5,label=below:$a_1$] at (0,0) {};
		\node [fill,circle,scale=0.5,label=$b$] at (-1,1) {};
		\node [fill,circle,scale=0.5,label=$c$] at (1,1) {};
		\node [fill,circle,scale=0.5,label=below:$d$] at (-1,-1) {};
		\node [fill,circle,scale=0.5,label=below:$a_2$] at (1,-1) {};
		
		\draw (4,0) -- (3,1) (4,0) -- (5,1) (4,0) -- (4,-1);

		\draw [->] (1,0) -- (3,0) node[midway,above] {f};
		\node [fill,circle,scale=0.5,label=$a$] at (4,0) {};
		\node [fill,circle,scale=0.5,label=$b$] at (3,1) {};
		\node [fill,circle,scale=0.5,label=$c$] at (5,1) {};
		\node [fill,circle,scale=0.5,label=below:$d$] at (4,-1) {};
		
		\end{tikzpicture}
	\end{center}
	The map $f$ defined by $f(a_1)=f(a_2)=a$ and mapping every other node to the node with the same name is a weakly coherent epimorphism which is not coherent at $a$.
\end{example}

\begin{example}\label{example: order three}
	Let $A,B$ be finite trees in which every vertex has degree at most $3$. Then any monotone epimorphism $f\colon A\to B$ is coherent. Indeed fix $b\in B$ of order $3$ and let $b_1$,$b_2$ and $b_3$ be its distinct neighbours. Let $b'_1\in f^{-1}(b_1),b'_2\in f^{-1}(b_2)$ and let $P=(p_0=b_1',p_1,\ldots,p_n=b'_2)$ be the unique arc from $b'_1$ to $b'_2$. By Remark \ref{remark: connected image}, $f(P)$ is connected and since it contains $b_1,b_2$ and $B$ is uniquely arcwise connected it must also contain $b$. Hence $P\cap f^{-1}(b)\neq\varnothing$. Let $b'_3\in f^{-1}(b_3)$ and let $Q$ be the shortest arc from $b'_3$ to a point in $P$, let $p_i=P\cap Q$ and note that, by another application of Remark \ref{remark: connected image}, similar to the one above, $p_i\in f^{-1}(b)$. It is now easy to check that $p_i$ witnesses the coherence of $f$ at $b$.
\end{example}

\begin{remark}\label{remark: composition of coherent is coherent}
	Note that if $A,B,C$ are finite trees and $f\colon A\to B,\,g\colon B\to C$ are (weakly) coherent epimorphisms, then $g\circ f\colon A\to C$ is also (weakly) coherent. Indeed if $c\in C$ with $\ord(c)\geq 3$ then there is some $b\in g^{-1}(c)$ witnessing the (weak) coherence of $g$ at $c$ and some $a\in f^{-1}(b)$ witnessing the (weak) coherence of $f$ at $b$. It is easy to check that $a$ also witnesses the (weak) coherence of $g\circ f$ at $c$.
\end{remark}
\begin{remark}
	Note also that if $A$ is a finite tree with at least two vertices and $B$ with $V(B)=V(A)\sqcup\{\ast\}$ obtained from $A$ by splitting an edge as in Definition \ref{def: splitting edges}, then the two maps $B\to A$ mapping $\ast$ to either endpoint of the split edge are coherent.
\end{remark}

\begin{defn}
	Given an inverse system $\langle A_i,i<\omega\rangle$ of graphs and monotone epimorphisms $f^i_j\colon A_i\to A_j$ we say that $a=(a_i)_i\in\varprojlim A_i\subseteq\prod A_i$ is a \emph{point of weak coherence} or a \emph{weakly coherent point} if there is a $k\in\omega$ such that for all $l>k$, $a_l$ is a point of weak coherence of $f^{l+1}_l$ and $a_{l+1}$ witnesses this. We say that $a$ is a \emph{point of coherence} or a \emph{coherent point} if there is a $k\in\omega$ such that for all $l>k$, $a_l$ is a point of coherence of $f^{l+1}_l$, and $a_{l+1}$ witnesses this.
\end{defn}

\begin{lemma}\label{lemmaw: uniqueness}
	Let $\T$ be a projective \Fraisse family of finite trees with monotone epimorphisms that allows splitting edges, let $\G$ be its projective \Fraisse limit and let $\pi\colon\G\to |\G|$ be its topological realization, which is a dendrite by Theorem \ref{thm: limit is dendrite}. Moreover let $p\in |\G|$ be a ramification point. Then there is a unique $p'\in\G$ with $\pi(p')=p$.
\end{lemma}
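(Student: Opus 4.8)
My plan is to determine the fibre $\pi^{-1}(p)$ completely, and then to exclude the possibility that it has two points by computing the order of $p$ in $|\G|$ from the local structure of $\G$ at the fibre.

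First I would carry out the structural reduction. Since $\T$ allows splitting edges, Lemma \ref{lemma: splitting implies prespace} tells us that $\G$ is a prespace, so $\pi$ is precisely the quotient by the edge equivalence relation $R$ and $\pi^{-1}(p)$ is a single $R$-class. By Theorem \ref{thm: transitive edges} every vertex of $\G$ has at most one neighbour, hence every $R$-class has at most two elements. Thus either $\pi^{-1}(p)=\{p'\}$, in which case there is nothing left to prove, or $\pi^{-1}(p)=\{p',p''\}$ with $p'\neq p''$ and $\langle p',p''\rangle\in E(\G)$. From now on I assume the second case and aim at a contradiction with the hypothesis that $p$ is a ramification point.

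Next I would compute $\ord_{|\G|}(p)$ in terms of $\G$. Recall from Theorem \ref{thm: limit is dendrite} that $\G$ is itself a dendrite, so orders in $\G$ make sense. The restriction of $\pi$ to $\G\setminus\{p',p''\}$ is a continuous surjection onto $|\G|\setminus\{p\}$, and the key observation is that $\pi$ can only identify two distinct points $a,b$ when $\langle a,b\rangle\in E(\G)$; but then $\{a,b\}$ is a connected two-point set disjoint from $\{p',p''\}$, hence lies inside a single component of $\G\setminus\{p',p''\}$. Therefore $\pi$ never glues distinct components of $\G\setminus\{p',p''\}$, and I would use this, together with the fact that $\G$ is locally connected, to identify the components of $|\G|\setminus\{p\}$ with those of $\G\setminus\{p',p''\}$. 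Since $p'$ and $p''$ are adjacent, removing both of them deletes exactly the branch of $p'$ pointing towards $p''$ and the branch of $p''$ pointing towards $p'$, so the number of components is $(\ord_\G(p')-1)+(\ord_\G(p'')-1)$. Hence $\ord_{|\G|}(p)=(\ord_\G(p')-1)+(\ord_\G(p'')-1)$, and $p$ being a ramification point forces the pair $\{p',p''\}$ to carry at least three branches jointly, i.e. $\ord_\G(p')+\ord_\G(p'')\geq 5$.

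The heart of the argument is then to show that a two-element $R$-class can in fact carry at most two branches jointly, i.e. that it always collapses to a point of order $\leq 2$; this is the step I expect to be the main obstacle. The strategy I would pursue is to localise the three separated branches at a finite stage and then exploit genericity. Concretely, I would pick points $y_1,y_2,y_3$ in three distinct components of $\G\setminus\{p',p''\}$ and, using property (3) of Theorem \ref{thm: projective limit exists}, choose a level $n$ with $f^n$-fibres so small that $p'_n,p''_n,y_{1,n},y_{2,n},y_{3,n}$ are distinct and the three directions appear as three distinct branches emanating from the adjacent pair $\{p'_n,p''_n\}$ in the tree $T_n$. I would then split the edge $\langle p'_n,p''_n\rangle$ and feed the resulting monotone epimorphism into the defining extension property of the \Fraisse limit, keeping in mind that every epimorphism $\G\to T_n^{\ast}$ obtained this way is monotone by Lemma \ref{lemma: maps from the limit are monotone}, so that it preserves arcs and endpoints by Theorem \ref{thm: monotone maps preserve endpoints} and sends connected sets to connected sets (Lemma \ref{lemma: connected image}). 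The hard part will be to turn this finite splitting into a genuine contradiction in the limit: a single split merely relocates $p'$ onto the degree-two midpoint while leaving both of its external branches on one side, so it does not by itself separate the branching from the edge. I therefore expect that the correct route is to iterate the splitting together with the projective amalgamation property so as to pull the three branches onto incompatible sides of a degree-two vertex, forcing either that $p'$ and $p''$ are separated (contradicting $\langle p',p''\rangle\in E(\G)$) or that $p'$ acquires a second neighbour in $\G$ (contradicting Theorem \ref{thm: transitive edges}). Establishing this limiting separation rigorously, and thereby the bound $\ord_{|\G|}(p)\leq 2$, is the crux on which the whole proof rests.
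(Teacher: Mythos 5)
Your setup coincides with the paper's: reduce via Lemma \ref{lemma: splitting implies prespace} and Theorem \ref{thm: transitive edges} to a two-element class $\pi^{-1}(p)=\{p_1,p_2\}$ with $\langle p_1,p_2\rangle\in E(\G)$, pull back three components of $|\G|\setminus\{p\}$, localize at a finite tree, and split the edge joining the images of $p_1$ and $p_2$. But the step you yourself flag as the crux is genuinely missing, and the route you sketch for it (iterating splits and amalgamations until $p_1,p_2$ become ``separated'' or $p_1$ acquires a second neighbour) is not how the argument can close: the eventual contradiction is not with edge-preservation or with Theorem \ref{thm: transitive edges}, and nothing in your outline shows the iteration terminates in either of those states. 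Incidentally, your order formula $(\ord_\G(p')-1)+(\ord_\G(p'')-1)$ is a detour the paper avoids entirely; it never computes orders in $\G$, it only needs three connected pullbacks meeting pairwise inside the fibre.

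The two ideas you are missing are these. First, an intersection claim: with $A_i$ the closure of a component of $|\G|\setminus\{p\}$ and $f_G\colon\G\to G$ a finite-level epimorphism separating $p_1$ from $p_2$ and making three images $f_G(\pi^{-1}(A_i))$ nontrivial, one shows $f_G(\pi^{-1}(A_i))\cap f_G(\pi^{-1}(A_j))\subseteq\{f_G(p_1),f_G(p_2)\}$; the proof uses that $f_G^{-1}(r)$ is connected for any other vertex $r$ (monotonicity, via Lemma \ref{lemma: maps from the limit are monotone}) whereas $(\pi^{-1}(A_i)\cup\pi^{-1}(A_j))\setminus\{p_1,p_2\}$ is disconnected. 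Second, a pigeonhole that orients the split: since $G$ is a tree, the three connected sets $f_G(\pi^{-1}(A_i))$ can only leave the edge $\langle f_G(p_1),f_G(p_2)\rangle$ through its two endpoints, so two of them, say for $A_1$ and $A_3$, attach at the same vertex $f_G(p_2)$. One then splits the edge with the \emph{new} vertex $r$ mapped to $f_G(p_2)$; commutativity plus edge-preservation forces $f_H(p_1)=a$ and $f_H(p_2)=r$, so the image of the pair is pulled off the old attachment vertex $b$, while $f_H(\pi^{-1}(A_1))$ and $f_H(\pi^{-1}(A_3))$ are still connected, still contain $r$, and still contain neighbours of $b$ other than $r$, hence both contain $b$. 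This contradicts the intersection claim applied in $H$. So a single split suffices, contrary to your worry that it ``merely relocates $p'$ onto the degree-two midpoint'': the split is effective precisely because its direction is chosen by the pigeonhole, and the contradiction lands on the fibre-connectedness claim rather than on the graph relations you propose.
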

\begin{proof}
	Suppose for a contradiction that $|\pi^{-1}(p)|>1$ and, using that equivalence classes in $\G$ have at most two elements by Theorem \ref{thm: transitive edges}, write $\pi^{-1}(p)=\{p_1,p_2\}$. Let $\{A_i'\mid i\leq n\}$ be the connected components of $|\Bbb G|\setminus\{p\}$, where $\ord(p)=n\in\Bbb N\cup\{\omega\}$. Let $A_i=A_i'\cup\{p\}$ for $i=1,\ldots,n$  and note that $\{\pi^{-1}(A_i)\mid i\leq n\}$ are connected sets in $\G$ intersecting only in $\{p_1,p_2\}$. Find a finite tree $G$ and a monotone epimorphism $f_G\colon \G\to G$ such that $f_G(\pi^{-1}(A_i))$ is nontrivial (meaning not contained in $\{f_G(p_1),f_G(p_2)\}$) for $i\in\{1,2,3\}$ and such that $f_G(p_1)\neq f_G(p_2)$. This is possible because epimorphisms in $\T^\omega$ are monotone by Lemma \ref{lemma: maps from the limit are monotone}. We claim that $$f_G(\pi^{-1}(A_i))\cap f_G(\pi^{-1}(A_j))\subseteq\{f_G(p_1),f_G(p_2)\},$$ for all distinct $i,j\in\{1,2,3\}$. Suppose for a contradiction that $$r\in f_G(\pi^{-1}(A_1))\cap f_G(\pi^{-1}(A_2))\setminus\{f_G(p_1),f_G(p_2)\}.$$ Then $f_G^{-1}(r)$ and $\{p_1,p_2\}$ are disjoint, nonempty, closed, connected subsets of $\Bbb G$. However $f_G^{-1}(r)$ meets both $\pi^{-1}(A_1)\setminus\{p_1,p_2\}$ and $\pi^{-1}(A_2)\setminus\{p_1,p_2\}$, which is a contradiction since $f_G^{-1}(r)$ is connected, while \\ $(\pi^{-1}(A_1)\cup\pi^{-1}(A_2))\setminus\{p_1,p_2\}$ is not. 
	
	The idea now is that we can always move to a bigger graph $H$ obtained from $G$ by splitting an edge in order to force $f_H(\pi^{-1}(A_1))\cap f_H(\pi^{-1}(A_3))\not\subseteq\{f_H(p_1),f_H(p_2)\}$. Indeed there must be at least two of $f_G(\pi^{-1}(A_1))$, $f_G(\pi^{-1}(A_2))$, $f_G(\pi^{-1}(A_3))$ that are connected to the same vertex $f_G(p_1)$ or $f_G(p_2)$, suppose without loss of generality that $f_G(\pi^{-1}(A_1))$ and $f_G(\pi^{-1}(A_3))$ are both connected to $f_G(p_2)$. Now construct a graph $H$ by taking $H=G\sqcup\{r\}$, where $r$ is a vertex not in $G$ and $$E(H)=(E(G)\setminus\{\langle f_G(p_1),f_G(p_2)\rangle\})\cup\langle f_G(p_1),r\rangle,\langle r,f_G(p_2)\rangle,$$ together with the epimorphism $f^H_G\colon H\to G$ defined by $f^H_G(r)=f_G(p_2)$ and the identity otherwise. Call $a=(f^H_G)^{-1}(f_G(p_1))$ and $b$ the vertex in $(f^H_G)^{-1}(f_G(p_2))$ distinct from $r$. Note that $H,f^H_G\in\T$ since $\T$ allows splitting edges. By definition of projective \Fraisse limit there must be an epimorphism $f_H\colon\Bbb G\to H$ such that $f^H_G\circ f_H=f_G$. The last identity combined with the fact that $f_H$ maps edges to edges forces $f_H(p_1)=a$ and $f_H(p_2)=r$. Now we can use the same argument as above to show that $$f_H(\pi^{-1}(A_1))\cap f_H(\pi^{-1}(A_3))\subseteq\{f_H(p_1),f_H(p_2)\}.$$ This is a contradiction, since by construction those two sets also meet in $b$.
\end{proof}

\begin{thm}\label{thmw: coherent implies splitting}
	Let $\T$ be a projective \Fraisse family of trees with monotone epimorphisms that allows splitting edges, let $\langle G_i\mid i<\omega\rangle$ with epimorphisms $f^m_n\colon G_m\to G_n$ be a \Fraisse sequence for $\T$, let $\G=\varprojlim G_i$ be the projective \Fraisse limit of $\T$ and let $\pi\colon \G\to |\G|$ be its topological realization, which is a dendrite by Theorem \ref{thm: limit is dendrite}. Then $\pi$ maps points of weak coherence of $\G\subseteq\prod G_i$ to ramification points of $|\G|$. 
\end{thm}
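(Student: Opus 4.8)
The plan is to show directly that $p:=\pi(a)$ has order at least $3$ in the dendrite $|\G|$ by producing three points of $|\G|$ whose pairwise connecting arcs all pass through $p$. Let $k$ witness that $a=(a_i)_i$ is a point of weak coherence, so that for every $l>k$ the vertex $a_l$ has $\ord(a_l)\ge 3$ and $a_{l+1}$ witnesses the weak coherence of $f^{l+1}_l$ at $a_l$; in particular, for each component $A$ of $G_l\setminus\{a_l\}$ there is a \emph{distinct} component $A^+$ of $G_{l+1}\setminus\{a_{l+1}\}$ with $(f^{l+1}_l)^{-1}(A)\subseteq A^+$. I would fix three components of $G_{k+1}\setminus\{a_{k+1}\}$ and, iterating this, track them upward to obtain for each $l>k$ and $j\in\{0,1,2\}$ a component $E^l_j$ of $G_l\setminus\{a_l\}$ with $(f^{l+1}_l)^{-1}(E^l_j)\subseteq E^{l+1}_j$ and $E^l_j\neq E^l_{j'}$ for $j\neq j'$. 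Setting $U_j:=\bigcup_{l>k}(f^l)^{-1}(E^l_j)$, each $U_j$ is a graph-connected subset of $\G$ (a nested increasing union of preimages of connected subtrees under the maps $f^l$, which are monotone by Lemma \ref{lemma: maps from the limit are monotone}); the $U_j$ are pairwise disjoint because distinct components of $G_l\setminus\{a_l\}$ are disjoint, and $a\notin U_j$ since $f^l(a)=a_l\notin E^l_j$.

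The heart of the argument is to show that for $j\neq j'$ and any $x\in U_j$, $x'\in U_{j'}$, the unique arc $\gamma$ joining $\pi(x)$ and $\pi(x')$ in $|\G|$ contains $p$. Since $\pi$ is the realization of a prespace, its point preimages are the (connected) edge classes and hence $\pi^{-1}(\gamma)$ is a graph-connected closed subset of $\G$ containing $x$ and $x'$. For all sufficiently large $l$ we have $f^l(x)\in E^l_j$ and $f^l(x')\in E^l_{j'}$, so the connected (by Lemma \ref{lemma: connected image}) set $f^l(\pi^{-1}(\gamma))$ meets two different components of the tree $G_l$ with the vertex $a_l$ removed, and therefore must contain $a_l$. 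Thus $\pi^{-1}(\gamma)$ meets the closed set $(f^l)^{-1}(a_l)$ for all sufficiently large $l$. These sets decrease with $l$ and satisfy $\bigcap_{l>k}(f^l)^{-1}(a_l)=\{a\}$, so by compactness $a\in\pi^{-1}(\gamma)$, i.e. $p\in\gamma$.

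It then remains to choose representatives $x_j\in U_j$ with $\pi(x_j)\neq p$, i.e. $x_j\notin\pi^{-1}(p)$. By Theorem \ref{thm: transitive edges} the class $\pi^{-1}(p)$ is either $\{a\}$ or $\{a,a'\}$ for a single neighbour $a'$ of $a$, and since $a\notin U_j$ I only need to avoid $a'$. As the $U_j$ are disjoint, $a'$ lies in at most one of them, say $U_{j_0}$, and for the other two threads any choice works. \textbf{The main obstacle is the thread $U_{j_0}$}: I must rule out $U_{j_0}=\{a'\}$, since otherwise that entire direction would collapse onto $p$. If $U_{j_0}=\{a'\}$ then $(f^l)^{-1}(E^l_{j_0})=\{a'\}$ for every $l$, which forces each $E^l_{j_0}$ to be a single leaf $\{a'_l\}$ with singleton fibre $(f^l)^{-1}(a'_l)=\{a'\}$; the edge $\langle a_l,a'_l\rangle$ is then never split, and a genericity argument using that $\T$ allows splitting edges (split this edge and absorb the resulting epimorphism along the \Fraisse sequence, forcing a fibre of size at least two) contradicts this. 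Hence $U_{j_0}\neq\{a'\}$ and a suitable $x_{j_0}$ exists. Writing $q_j:=\pi(x_j)$, the three points $q_0,q_1,q_2\in|\G|\setminus\{p\}$ have pairwise connecting arcs all passing through $p$, so in the dendrite $|\G|$ they lie in three distinct components of $|\G|\setminus\{p\}$; therefore $\ord(p)\ge 3$ and $p$ is a ramification point.
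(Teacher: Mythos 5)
Your proof is correct, but it takes a genuinely different route from the paper's. The paper's central step is to show that the $E(\G)$-class of the weakly coherent point $p$ is a singleton: assuming a partner $q$ with $\langle p,q\rangle\in E(\G)$, it splits the edge $\langle p_k,q_k\rangle$ in $G_k$ and derives a contradiction from the uniqueness of weak-coherence witnesses under composition (Remark \ref{remark: witnesses of coherence}); once the class is a singleton, the components of $\G\setminus\{p\}$ project exactly onto the components of $|\G|\setminus\{\pi(p)\}$, which simultaneously computes the order of $\pi(p)$. You never prove the class is a singleton. Instead you track three component-threads $U_j$, show by an inverse-limit/compactness argument that every arc of $|\G|$ joining different threads passes through $\pi(p)$, and then sidestep the possible partner $a'$ by choosing representatives away from it, ruling out the degenerate case $U_{j_0}=\{a'\}$ with a different application of edge-splitting (splitting the pendant edge $\langle a_l,a'_l\rangle$ and factoring through the \Fraisse sequence forces a fibre of $f^m_l$ of size at least two, contradicting the singleton fibres that $U_{j_0}=\{a'\}$ imposes); this last sketch, though terse, does close correctly. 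Both proofs combine the splitting-edges hypothesis with the extension property of the \Fraisse sequence, but on different edges and to different ends, and your use of weak coherence is lighter: you only need injectivity of the component assignment, not witness tracking. The trade-off is information: your argument yields exactly the stated conclusion, $\ord(\pi(p))\geq 3$, by soft dendrite theory, whereas the paper's proof additionally gives injectivity of $\pi$ at $p$ and the exact order of $\pi(p)$ as recorded in Remark \ref{rmk: order of weakly coherent} -- facts relied upon later (e.g.\ in Theorems \ref{thm: limit of G_P} and \ref{thm: limit of F_P} to produce ramification points of prescribed order), so under your proof that remark would need a separate argument.
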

\begin{proof}
	Suppose that $p=(p_i)$ is a point of weak coherence of $\G$, we want to show that $\pi(p)$ is a ramification point of $|\G|$. Let $f_k\colon\G\to G_k$ be the projection epimorphisms obtained by construction of the inverse limit, let $A^1_k,\ldots,A^{n_k}_k$ be the connected components of $G_k\setminus\{p_k\}$, enumerated so that $(f^{k+1}_k)^{-1}(A^i_k)\subseteq A^i_{k+1}$ for all $i$. Let $k(i)$ be the least $k$ such that $G_k\setminus\{p_k\}$ has at least $i$ connected components, if such an $i$ exists, and $k(i)=\omega$ otherwise. Note that $p=\bigcap f_k^{-1}(p_k)$ and that $$\{A^i=\bigcup_{k\geq k(i)} f_k^{-1}(A^i_k)\mid 1\leq i< \omega\}$$ are the connected components of $\G\setminus \{p\}$ (with $A^i$ being empty if $k(i)=\omega$). We will show that the equivalence class of $p$ in $\G$ is a singleton, from which we can conclude that $\pi(A^i)$ are the connected components of $|\G|\setminus\{\pi(p)\}$, so that $\pi(p)$ is a ramification point of $|\G|$ of order equal to the least $n$ such that $k(n+1)=\omega$ or to $\omega$ if there is no such $n$. Suppose for a contradiction that there exist $q=(q_i)\in\G$ with $\langle p,q\rangle\in E(\G)$, so in particular $\langle p_i,q_i \rangle \in E(G_i)$ for all $i$. Let $k$ be big enough so that $p_k\neq q_k$ and let $A^j_k$ be the component of $G_k\setminus \{p_k\}$ containing $q_k$. Note that by weak coherence for all $m\geq k$, $q_m\in A^j_m$. Consider the tree $H$ obtained from $G_k$ by splitting the $\langle p_k,q_k\rangle$ edge into two edges $\langle p_k,p_k'\rangle$ and $\langle p_k',q_k\rangle$, with the epimorphism $f^H_k\colon H\to G_k$ such that $f^H_k(p'_k)=p_k$ and the identity otherwise. Note that $f^H_k$ is coherent at $p_k\in G_k$, as witnessed by $p_k\in H$. 
	By definition of \Fraisse sequence we can find $l$ big enough and an epimorphism $f^l_H\colon G_l\to H$ such that $f^l_k=f^H_k\circ f^l_H$, which implies that $f^l_H(q_l)=q_k\in H$. On one hand we now must have $f^l_H(p_l)=p_k\in H$ by weak coherence, since $p_l$ witnesses the weak coherence of $f^l_k$ at $p_k\in G_k$ and $p_k\in H$ witnesses the weak coherence of $f^H_k$ at $p_k\in G_k$, on the other hand we must have $f^l_H(p_l)=p_k'$, since $f^l_H$ maps edges to edges, a contradiction. 
\end{proof}

\begin{remark}\label{rmk: order of weakly coherent}
	Note in particular that if $x=(x_i)$ is a point of weak coherence in $\prod G_i$ then either $\ord(x_i)$ stabilizes after some index $j\in\Bbb N$ to some finite value $n$ or it grows unboundedly. In the former case $\pi(x)$ also has order $n$ as a ramification point of $|\Bbb G|$, while in the latter $\pi(x)$ is a ramification point of $|\Bbb G|$ of infinite order.
\end{remark}
	
The converse to Theorem \ref{thmw: coherent implies splitting} is false in general, but it is true when monotone epimorphisms are replaced with weakly coherent ones, as shown in the following theorem.

\begin{thm}\label{thmw: splitting implies coherent}
	Let $\T$ be a projective \Fraisse family of trees with weakly coherent epimorphisms, let $\langle G_i\mid i<\omega\rangle$ with epimorphisms $f^m_n\colon G_m\to G_n$ be a \Fraisse sequence for $\T$ and let $\pi\colon\G\to|\G|$ be its topological realization, which is a dendrite by theorem \ref{thm: limit is dendrite}. Then $\pi^{-1}$ maps ramification points of $|\G|$ to points of weak coherence of $\G\subseteq\prod G_i$. 
\end{thm}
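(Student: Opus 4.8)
The plan is to attach to the ramification point $p$ a genuinely weakly coherent point of $\G$ and then force it to coincide with $\pi^{-1}(p)$. Since the family allows splitting edges (this is what makes $\G$ a prespace, so that $\pi$ is defined), Lemma~\ref{lemmaw: uniqueness} applies and $\pi^{-1}(p)=\{p'\}$ is a single point $p'=(p'_i)$. Throughout I use that the projections $f_i\colon\G\to G_i$ are monotone (Lemma~\ref{lemma: maps from the limit are monotone}) and that $\pi$ itself is monotone, its fibres being the edge classes, which have at most two elements. First I would record a finite‑stage separation fact. Choose $\bar x,\bar y,\bar z$ in three distinct components of $|\G|\setminus\{p\}$ (possible since $p$ is a ramification point) and lift them to $x,y,z\in\G$; because $\pi$ is monotone with $\pi^{-1}(p)=\{p'\}$, these lie in three distinct components of $\G\setminus\{p'\}$. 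I claim that for all large $i$ the points $f_i(x),f_i(y),f_i(z)$ lie in three distinct components of $G_i\setminus\{p'_i\}$: if, say, $f_i(x)$ and $f_i(y)$ lay in the same component $A$ of $G_i\setminus\{p'_i\}$, then $f_i^{-1}(A)$ would be a connected subset of $\G$ (monotonicity of $f_i$) avoiding $p'$ and containing both $x$ and $y$, contradicting their choice. In particular $\ord(p'_i)\ge 3$ for all large $i$.

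Next I would build a witness thread. Fix $N$ large enough that the separation above holds from level $N$ on and $\ord(p'_N)\ge 3$. Set $\omega_N=p'_N$ and, inductively, let $\omega_{i+1}\in G_{i+1}$ be the unique vertex witnessing the weak coherence of $f^{i+1}_i$ at $\omega_i$. This vertex exists because $f^{i+1}_i$ is weakly coherent and $\ord(\omega_i)\ge 3$, it is unique by Remark~\ref{remark: witnesses of coherence}, and $\ord(\omega_{i+1})\ge\ord(\omega_i)\ge 3$ keeps the induction running. Since $f^{i+1}_i(\omega_{i+1})=\omega_i$ for all $i\ge N$, setting $\omega_i=p'_i$ for $i<N$ yields a coherent thread, hence a point $\omega=(\omega_i)\in\G$ which by construction is a point of weak coherence and satisfies $\omega_N=p'_N$.

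It then remains to identify $\pi(\omega)$ with $p$. By Theorem~\ref{thmw: coherent implies splitting}, $\pi(\omega)$ is a ramification point of $|\G|$, and the proof of that theorem shows the components of $\G\setminus\{\omega\}$ to be the coherently enumerated unions $\bigcup_k f_k^{-1}(A^j_k)$, where the $A^j_k$ are the components of $G_k\setminus\{\omega_k\}$; consequently, points lying in distinct components of $G_k\setminus\{\omega_k\}$ for some $k\ge N$ lie in distinct components of $\G\setminus\{\omega\}$. Applying this at level $N$, where $\omega_N=p'_N$ and $f_N(x),f_N(y),f_N(z)$ are in distinct components by the separation fact, shows that $\pi(x),\pi(y),\pi(z)$ lie in three distinct components of $|\G|\setminus\{\pi(\omega)\}$; the same holds for $p$ by the choice of $\bar x,\bar y,\bar z$. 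In a dendrite the point separating three given points pairwise is unique (it is their median), whence $\pi(\omega)=p$. Lemma~\ref{lemmaw: uniqueness} then gives $\omega=p'$, so $\omega_i=p'_i$ for every $i$; thus for each $i\ge N$ the vertex $p'_{i+1}$ witnesses the weak coherence of $f^{i+1}_i$ at $p'_i$, which is exactly the assertion that $p'$ is a point of weak coherence.

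The hard part is the middle step: manufacturing the weakly coherent point $\omega$ out of the uniquely determined witnesses, and then recognizing through the median argument that $\omega$ cannot differ from $p'$. This is precisely where the weak‑coherence hypothesis (rather than mere monotonicity) is used: it is what guarantees that the witnesses $\omega_{i+1}$ exist and are unique, so that the thread is well defined. In the merely monotone setting these witnesses need not exist, which is why the converse of Theorem~\ref{thmw: coherent implies splitting} fails there.
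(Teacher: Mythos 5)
Your proof is correct, but it follows a genuinely different route from the paper's. The paper works coordinatewise on the unique lift $x=(x_i)\in\pi^{-1}(x')$ given by Lemma~\ref{lemmaw: uniqueness}: after choosing $k$ so that three components $A^1,A^2,A^3$ of $\G\setminus\{x\}$ have non-degenerate projections, hence determine distinct components $A^i_k$ of $G_k\setminus\{x_k\}$ with $f_k^{-1}(A^i_k)\subseteq A^i$, it shows by contradiction that $x_{k+1}$ must itself be the witness of weak coherence of $f^{k+1}_k$ at $x_k$: otherwise a single component of $G_{k+1}\setminus\{x_{k+1}\}$ would contain $(f^{k+1}_k)^{-1}(A^i_k)$ for two distinct $i$, and pulling back along $f_{k+1}$ would force $f_k^{-1}(A^1_k)$ and $f_k^{-1}(A^2_k)$ into one component of $\G\setminus\{x\}$, contradicting the containments above. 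You instead build the weakly coherent thread $\omega$ abstractly, from existence of witnesses (weak coherence of the bonding maps, plus the fact that orders of witnesses do not decrease) and their uniqueness (Remark~\ref{remark: witnesses of coherence}), and only afterwards identify $\omega$ with the lift $p'$, by running the already-proved forward direction (Theorem~\ref{thmw: coherent implies splitting}) on $\omega$ as a black box and invoking uniqueness of the point of a dendrite separating three given points pairwise, then Lemma~\ref{lemmaw: uniqueness} to conclude $\omega=p'$. There is no circularity here, since Theorem~\ref{thmw: coherent implies splitting} is proved independently of the present statement. The trade-off: the paper's argument is local and self-contained at the finite levels; yours outsources the combinatorics to the forward theorem and replaces the coordinatewise contradiction with the median argument in $|\G|$, at the price of leaning on the correspondence between components of $\G\setminus\{p'\}$ and of $|\G|\setminus\{p\}$. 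Your appeal to ``monotonicity of $\pi$'' for that correspondence is the one loosely justified step: what is actually needed (and true, and also used silently in the paper's proof of Theorem~\ref{thmw: coherent implies splitting}) is that $\pi$ maps graph-connected sets to topologically connected sets, so that when $\pi^{-1}(p)$ is a singleton the components upstairs and downstairs match up. Both proofs otherwise consume the same ingredients: Lemma~\ref{lemmaw: uniqueness}, monotonicity of the projections $f_i$ (Lemma~\ref{lemma: maps from the limit are monotone}), and weak coherence of the bonding maps.
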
	
\begin{proof}
	As above let $f_k\colon\G\to G_k$ be the projection epimorphisms obtained by construction of the limit and let $x'\in|\G|$ be a ramification point. By Lemma \ref{lemmaw: uniqueness} there is a unique $x=(x_i)\in\pi^{-1}(x')$ and we want to show that it is a point of weak coherence in $\prod G_i$. Let $\{A^i\mid i<\ord(x)\}$ with $\ord(x)\in\Bbb N\cup\{\omega\}$ be the connected components of $\G\setminus\{x\}$ and let $k$ be big enough so that $f_k(A^1),f_k(A^2),f_k(A^3)$ are not singletons in $G_k$. Let $A^1_k,A^2_k$ and $A^3_k$ be connected components of $G_k\setminus \{x_k\}$, numbered so that $f_k^{-1}(A^i_k)\subseteq A^i$ for $i=1,2,3$, which is always possible since $f_k^{-1}(A^i_k)$ is connected by Lemma \ref{lemma: maps from the limit are monotone} and doesn't meet $f_k^{-1}(x_k)$ by construction. By assumption $f^{k+1}_k\colon G_{k+1}\to G_k$ is weakly coherent at $x_k$, as witnessed by some vertex $a\in G_{k+1}$. Suppose by contradiction that $x_{k+1}\neq a$. Note that if $A^i_{k+1}$ is a connected component of $G_{k+1}\setminus\{ x_{k+1}\}$, then there is a unique $j$ such that $f_{k+1}^{-1}(A^i_{k+1})\subseteq A^j$, once again because $f_{k+1}^{-1}(A^i_{k+1})$ is connected and doesn't meet $f^{-1}_{k+1}(x_{k+1})$.
	Since $x_{k+1}\neq a$ and $G_{k+1}$ is a tree, there is a connected component $A^n_{k+1}$ of $G\setminus \{x_{k+1}\}$, that contains all connected components of $G\setminus \{a\}$ except at most one, so it also contains $(f^{k+1}_k)^{-1}(A^i_k)$ for at least two distinct $i$. Assume without loss of generality that 
	\begin{align*}
	(f^{k+1}_k)^{-1}(A^1_k)&\subseteq A^n_{k+1} \\
	(f^{k+1}_k)^{-1}(A^2_k)&\subseteq A^n_{k+1},
	\end{align*} 
	and let $j$ be the unique index such that $f_{k+1}^{-1}(A^n_{k+1})\subseteq A^j$. But now we have both $f^{k+1}_k\circ f_{k+1}=f_k$ and 
	\begin{align*}
	(f_{k+1}^{-1}\circ(f^{k+1}_k)^{-1})(A^1_k)&\subseteq A^j\\
	(f_{k+1}^{-1}\circ(f^{k+1}_k)^{-1})(A^2_k)&\subseteq A^j\\
	f_k^{-1}(A^1_k)&\subseteq A^1\\
	f_k^{-1}(A^2_k)&\subseteq A^2,
	\end{align*}
	which cannot all hold at the same time regardless of the value of $j$, a contradiction.
\end{proof}

\subsection{Generalized \Wazewski Dendrites from Projective \Fraisse Families}\hfill\\
We now introduce some families of finite trees. The remainder of this section will be dedicated to showing that those are projective \Fraisse families and that the topological realizations of their projective \Fraisse limits are generalized \Wazewski dendrites.

\begin{defn}\label{def: families Fp and Gp}
	Let $P\subseteq\{3,4,5,\ldots,\omega\}$. We consider two cases. If $\omega\in P$ we consider the family $\mathcal F_P$ whose elements are finite trees with no vertices of order $2$. Given $A,B\in\mathcal F_P$, an epimorphism of graphs $f\colon B\to A$ is in $\mathcal F_P$ if:
	\begin{enumerate}
		\item $f$ is monotone;
		\item If $a\in A$ is such that $\ord(a)\in P$, then $f$ is coherent at $a$.
		\item If $a\in A$ is such that $\ord(a)\not\in P$ and $\ord(a)\geq 3$ then $f$ is weakly coherent at $a$, and if $b\in f^{-1}(a)$ is the witness for the weak coherence of $f$ at $a$, then $\ord(b)\not\in P$.
	\end{enumerate}

\noindent On the other hand, if $\omega\not\in P$, we consider the family $\mathcal G_P$ whose elements are finite trees all of whose vertices are either endpoints or have order in $P$, with coherent monotone epimorphisms.
\end{defn}

\begin{remark}\label{remark: splitting edges}
	The families $\F_P$ and $\mathcal G_P$ don't allow splitting edges as defined earlier, since the trees in those families don't have any vertices of degree two. However if $T$ is a tree in $\F_P$ or $\mathcal G_P$ and $a,b\in T$ with $\langle a,b\rangle\in E(T)$ it is still possible to split the edge $\langle a,b\rangle$ by removing it and then adding a new point $x$ connected to $a$ and $b$. We have to also add enough new neighbours $x_i$ to $x$ until $\ord(x)\in P$. This is enough for all the arguments in the previous sections that used the edge splitting property.
\end{remark}

We will now verify that $\mathcal F_P$ and $\mathcal G_P$ are projective \Fraisse families and later we will identify the topological realizations of their projective \Fraisse limits. It seems natural to guess that if $\mathbb F_P$ and $\G_P$ are the projective \Fraisse limits of $\F_P$ and $\mathcal G_P$, then $|\mathbb F_P|\cong W_P$ and $|\G_P|\cong W_P$, but unfortunately this is not always the case. It is true for all families of the form $\mathcal G_P$, but for families of the form $\F_P$ it only holds when $\{3,4,5,\ldots,\omega\}\setminus P$ is infinite, as we will see later. Clearly both $\mathcal F_P$ and $\mathcal G_P$ contain countably many structures up to isomorphism, contain the identity morphisms and their morphisms are closed under composition by Remark \ref{remark: composition of coherent is coherent}. We begin by verifying the joint projection property for $\F_P$ and $\mathcal G_P$, with a construction that works for both families.

\begin{lemma}\label{lemma: joint projection property}
	Let $A,B\in\F_P$ (respectively $A,B\in\mathcal G_P$). Then there exists $C\in\F_P$ (respectively $C\in\mathcal G_P$) with epimorphisms $f_1\colon C\to A,\,f_2\colon C\to B$ in $\F_P$ (respectively in $\mathcal G_P$). 
\end{lemma}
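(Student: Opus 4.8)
The plan is to produce a single tree $C$ that carries a copy of $A$ and a copy of $B$ glued at one leaf, and to obtain the two projections by collapsing one copy to a point. First I would dispose of the degenerate case where $A$ or $B$ is a single vertex: there the collapse of all of $B$ onto that vertex gives $f_1$ while $f_2=\mathrm{id}_B$, so I may assume both $A$ and $B$ have at least one edge and hence at least one endpoint. Fix endpoints $a^\ast\in A$ and $b^\ast\in B$, and let $n_0$ denote the least admissible non-endpoint order of the family, namely $n_0=3$ for $\F_P$ (any order $\geq 3$ is admissible there) and $n_0=\min P$ for $\mathcal G_P$; in both cases $n_0\geq 3$. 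I would then build $C$ by taking the disjoint union of $A$ and $B$, identifying $a^\ast$ with $b^\ast$ into a single vertex $v$, and attaching $n_0-2$ new pendant vertices to $v$. Gluing two trees at a vertex and adding leaves keeps the result a tree.

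Next I would check that $C$ lies in the family. Every vertex other than $v$ and the new pendants keeps the order it had in $A$ or in $B$, hence remains admissible; the pendants are endpoints; and $\ord(v)=1+1+(n_0-2)=n_0$, which is admissible by the choice of $n_0$ (in particular $\neq 2$, giving $C\in\F_P$, and equal to $\min P\in P$, giving $C\in\mathcal G_P$). I would then define $f_1\colon C\to A$ to be the identity on the copy of $A$, with $v\mapsto a^\ast$, and to send every vertex of the copy of $B$ together with every pendant to $a^\ast$; the map $f_2$ is defined symmetrically, collapsing the copy of $A$ and the pendants onto $b^\ast$. Each $f_i$ collapses a connected subtree to a point and is the identity elsewhere, so it is a surjection whose fibres are connected, i.e.\ a monotone epimorphism of graphs.

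The coherence requirements then follow from a single observation: the unique non-singleton fibre of $f_1$ is $f_1^{-1}(a^\ast)$, which sits over the endpoint $a^\ast$ of order $1$, a point at which neither $\F_P$ nor $\mathcal G_P$ imposes any condition. Every ramification point $a\neq a^\ast$ of $A$ has a singleton fibre and retains its order in $C$, so the components of $C\setminus\{a\}$ correspond bijectively to those of $A\setminus\{a\}$; thus $f_1$ is coherent at every such $a$, with witness $a$ itself of the same order (so the witness condition of case~(3) for $\F_P$ is met as well). The identical argument applies to $f_2$, so both projections lie in the family. The only genuine obstacle is keeping $C$ \emph{inside} the family: a naive gluing of two leaves creates a vertex of order $2$, forbidden in both $\F_P$ and $\mathcal G_P$, and the sole purpose of the $n_0-2$ pendant edges is to raise $\ord(v)$ to an admissible value. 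Choosing to glue at endpoints, rather than at interior vertices, is precisely what trivializes the coherence verification, since it places the one nontrivial fibre over an order-$1$ point.
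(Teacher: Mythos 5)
Your proposal is correct and takes essentially the same approach as the paper: glue $A$ and $B$ at a pair of endpoints, attach pendant leaves to the identified vertex until its order is admissible, and take the two collapse maps as the projections, with coherence holding trivially because the only nontrivial fibre sits over an endpoint and every ramification point has a singleton fibre of unchanged order. Your explicit handling of the singleton case and your precise pendant count $n_0-2$ (versus the paper's ``add vertices until $\ord(x)\in P$'') are only cosmetic refinements.
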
 
\begin{proof}
	Let $a\in A,\,b\in B$ be two endpoints. Consider the tree $C'$ obtained by taking the disjoint union of $A$ and $B$ and identifying $a$ with $b$. Let $C$ be the tree obtained from $C'$ by adding new vertices $x_i$ connected only to $x$ until $\ord(x)\in P$, where $x$ is the vertex $a$ and $b$ have been identified in. Define $f\colon C\to A$ to be the identity on $V(A)$, while any other vertex is mapped to $x$. Analogously define $G\colon C\to B$ to be the identity on $V(B)$, while any other vertex is mapped to $x$. It is immediate to check that $C$ witnesses the joint projection property for $A,B$.
\end{proof}

It remains to verify that the projective amalgamation property is satisfied. We do so for families of the form $\mathcal F_P$ and note that the procedure described for those to produce an amalgam $D$ from a diagram $C\to A\leftarrow B$ of structures in $\mathcal F_P$ can also be applied to diagrams of the same shape in $\mathcal G_P$, and the amalgam $D$ produced in that case is itself an element of $\mathcal G_P$.

\begin{lemma}\label{lemma: amalgamation is satisfied}
	The family $\F_P$ satisfies the projective amalgamation property, so it is a projective \Fraisse family. 	Explicitly for all $A,B,C\in\F_P$ with epimorphisms $f\colon B\to A$ and $g\colon C\to A$ in $\F_P$, there exist $D\in\F_P$ with epimorphisms $h_1\colon D\to B$ and $h_2\colon D\to C$ in $\F_P$ such that $f\circ h_1=g\circ h_2$.
\end{lemma}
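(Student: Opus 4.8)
The plan is to construct $D$ fiberwise over $A$. For $a\in A$ write $S_a=f^{-1}(a)\subseteq B$ and $T_a=g^{-1}(a)\subseteq C$; since $f,g$ are monotone and $A,B,C$ are trees, each $S_a,T_a$ is a subtree, and for every edge $\langle a,a'\rangle\in E(A)$ there is a \emph{unique} edge of $B$ joining $S_a$ to $S_{a'}$ and a unique edge of $C$ joining $T_a$ to $T_{a'}$ (uniqueness because two such edges would close a cycle, existence because edges lift along epimorphisms). First I would solve the \emph{local} problem over a single vertex $a$: produce a finite tree $U_a$ with monotone surjections $\alpha_a\colon U_a\to S_a$ and $\delta_a\colon U_a\to T_a$, realized as a subtree of the grid $S_a\times T_a$ selected by a staircase-type rule so that the fibers of \emph{both} coordinate projections stay connected (a naive full product is a grid, not a tree, so one selects a subtree rather than the whole fiber product). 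For each edge of $A$ incident to $a$ I would mark a \emph{port} vertex of $U_a$ lying over the endpoints of the corresponding connecting edges. Gluing the $U_a$ by a single edge joining matching ports, one edge per edge of $A$, then produces a finite tree $D$ together with epimorphisms $h_1\colon D\to B$ and $h_2\colon D\to C$ satisfying $f\circ h_1=g\circ h_2$; because $A$ and each $U_a$ are trees and we glue along single edges, $D$ is a tree and $h_1,h_2$ are monotone by \ref{lemma: connected image} and the connectedness of the local fibers.

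The decisive simplification is that it suffices to arrange $h_1$ and $h_2$ to be \emph{coherent} at every ramification point. A coherent epimorphism automatically lies in $\F_P$: condition (2) is coherence itself, while at a vertex $b$ with $\ord(b)\ge 3$ and $\ord(b)\notin P$ the coherence witness $d$ satisfies $\ord(d)=\ord(b)\notin P$, which is exactly condition (3). Thus I would refine the local construction so that each $\alpha_a$ and $\delta_a$ is coherent: at every ramification point $b$ of $S_a$ (respectively $T_a$) the tree $U_a$ should contain a single witness vertex whose branches biject with those of $b$, and the ports should be placed \emph{at} these witnesses, so that each external branch of $b$ in $B$ (one leaving $S_a$ through a connecting edge) is absorbed into a distinct branch of the witness. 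Using the uniqueness of witnesses from Remark \ref{remark: witnesses of coherence} and the fact that coherence is preserved under composition from Remark \ref{remark: composition of coherent is coherent}, local coherence of the $\alpha_a,\delta_a$ stitches together into coherence of $h_1$ and $h_2$ across the gluing.

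Finally I would normalize $D$ into a legitimate object of $\F_P$: suppress any vertices of order $2$ produced by the construction (equivalently, unsplit the corresponding edges in the sense of Definition \ref{def: splitting edges}) and attach pendant leaves wherever a ramification point is left, so that $D$ has no vertices of order $2$; neither operation affects coherence of $h_1$ or $h_2$. For the family $\mathcal{G}_P$ (the case $\infty\notin P$) the identical procedure applies, with one extra step: since every vertex of a structure in $\mathcal{G}_P$ must be an endpoint or have order in $P$, I would replace each vertex of $D$ whose order is not in $P$ by a small tree whose ramification points all have order in $P$ (a $P$-ary expansion), which is possible as $P\neq\varnothing$; one checks $h_1,h_2$ remain coherent, so the amalgam lands in $\mathcal{G}_P$.

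The main obstacle is the local amalgamation of the two trees $S$ and $T$: producing a single finite tree with coordinate projections that are \emph{simultaneously monotone and coherent} while respecting the prescribed ports. The tension is that connectedness of the fibers in both directions cannot be achieved on the full grid $S\times T$ (it is not a tree), so one must select a subtree in which each witness vertex has \emph{exactly} the required order and in which the port branches and the internal branches at that witness line up one-to-one; the accompanying bookkeeping—keeping the witness and vertex orders under control so that $D$ has no order-$2$ vertices in the $\F_P$ case and only $P$-admissible orders in the $\mathcal{G}_P$ case—is where the real work lies.
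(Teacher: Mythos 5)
Your reduction of membership in $\F_P$ to coherence is correct (a coherent epimorphism satisfies conditions (2) and (3) automatically, since the witness has equal order), and your gluing scheme does produce a tree $D$ with monotone epimorphisms onto $B$ and $C$ making the square commute. But the proof has a genuine gap exactly where you say ``the real work lies'': the local problem is not solved, and the recipe you do give for it is inconsistent. You prescribe that each port of $U_a$ be placed \emph{at} the coherence witnesses of both of its coordinates, i.e.\ the port $u_i$ with $\alpha_a(u_i)=b_i$ and $\delta_a(u_i)=c_i$ should simultaneously be the witness of $b_i$ for $h_1$ and the witness of $c_i$ for $h_2$. Coherence of $h_1$ at $b_i$ then forces $\ord_D(u_i)=\ord_B(b_i)$, while coherence of $h_2$ at $c_i$ forces $\ord_D(u_i)=\ord_C(c_i)$; these can differ for legitimate inputs. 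For instance, take $P=\{3,4,5,\ldots,\infty\}$, $A$ a star with center $a_0$ and three leaves, $f=\Id_A$ (so the connecting edge for $\langle a_0,a_1\rangle$ attaches at $b_1=a_0$ with $\ord_B(b_1)=3$), and $C$ built so that $g^{-1}(a_0)$ is a star with center $c_0$ and leaves $c_1',c_2',c_3'$, the connecting edge for $a_1$ attaching at $c_1'$, with extra leaves added inside $g^{-1}(a_0)$ so that $\ord_C(c_1')=4$ and no vertex has order $2$; then $g$ is coherent at $a_0$ (witness $c_0$), both attach points are ramification points, and $\ord_B(b_1)=3\neq 4=\ord_C(c_1')$, so no single vertex of $D$ can serve as both witnesses. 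A related conflict arises when two connecting edges of $B$ attach at the same $b\in S_a$ but their $C$-partners attach at distinct vertices of $T_a$: the two ports then cannot both sit at the witness of $b$. So ports must in general be placed in \emph{spare branches} of the witnesses rather than at them, and one must then verify that the exact order counts demanded by coherence (the witness must have order \emph{equal} to that of the point it witnesses) can be met simultaneously for the $\alpha_a$-side and the $\delta_a$-side, together with surjectivity and connectedness of fibers. That verification is the entire content of the lemma, and your proposal does not supply it.

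For comparison, the paper avoids the two-sided local problem altogether: it argues by induction on $|E(B)|+|E(C)|$, picking a vertex $a$ with nontrivial fiber, collapsing a suitable piece $B_2$ of $f^{-1}(a)$ to get a smaller tree $B'$ with $f'\colon B'\to A$, amalgamating $f'$ and $g$ by the inductive hypothesis, and then reinserting $B_2$ into the amalgam along the edge that tracks where $B_2$ was removed (with separate treatments for the cases where $B_2$ is a proper piece of a component of $B\setminus\{b\}$, where it is a whole component, and where $a$ is an endpoint). Coherence of the resulting maps is then inherited from the inductive step plus the observation that points of the reinserted $B_2$ witness their own coherence, so the delicate branch-counting you would need never has to be confronted head on. If you want to salvage your fiberwise approach, you should either prove the local amalgamation-with-ports statement by its own induction, or accept weak coherence at the ports (tracking condition (3) of $\F_P$ explicitly) rather than full coherence.
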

\begin{proof}
	We proceed by induction on $|E(B)|+|E(C)|$. If there are no nontrivial edges in $B$ and $C$ then $A$ must also be a singleton and we can take $D$ to be a singleton as well.  For the inductive step suppose now that $B$ and $C$ are not both singletons and that the lemma is proved for all diagrams in $\F_P$ of the form $F\to A\leftarrow G$ with $|E(F)|+|E(G)|<|E(B)|+|E(C)|$. If $A=B=C$ and $f=g=\mathrm{Id}_A$ we can simply take $D=A$ and $h_1=h_2=\mathrm{Id}_A$, so we can assume that at least one of $f$ and $g$ is nontrivial. Suppose without loss of generality that $f$ is nontrivial. Find $a\in A$ such that $|f^{-1}(a)|>1$. There are two possibilities now, $a$ could be a ramification point or and endpoint of $A$, and we deal with those two cases separately.  
	
	\textbf{Case 1: $\mathbf{a}$ is a ramification point.} Let $b\in B$ be the witness for the (weak) coherence of $f$ at $a$ and let $B_1$ be a component of $B\setminus\{b\}$ with $B_1\cap f^{-1}(a)\neq\varnothing$. Let $B_2=B_1\cap f^{-1}(a)$. We need to distinguish two cases, based on whether $B_2=B_1$ or not. 
	
	\textbf{Case 1a: $\mathbf{B_2\neq B_1}$} We start with the harder case, that is $B_2\neq B_1$ and explain how to deal with the easier case later. Let $b_1$ be the only element of $B_2$ with $\langle b,b_1\rangle\in E(B)$ and let $b_2$ be the only element of $B_2$ for which there exist a $b_3\in B_1\setminus B_2$ with $\langle b_2,b_3\rangle\in E(B)$ (uniqueness of $b_1,b_2,b_3$ follows from $B$ being a tree). Consider now the graph $B'$ obtained from $B$ by collapsing $B_2$ to $b$, formally $V(B')=V(B)\setminus V(B_2)$ and $$E(B')=(E(B)\setminus\left( E(B_2)\cup\{\langle b,b_1\rangle,\langle b_2,b_3\rangle\})\right)\cup\{\langle b,b_3\rangle\},$$ see Figure \ref{fig: amalgamation}. There is a natural map $\pi\colon B\to B'$ given by $\pi(x)=b$ if $x\in B_2$ and the identity otherwise. Note that $B'\in\F_P$ since both $b$ and $b_3$ have the same order as $\pi(b)$ and $\pi(b_3)$, and $\pi$ is a coherent epimorphism, so in particular it is in $\F_P$. Let $f'\colon B'\to A$ be the unique map with $f'\circ \pi=f$ and note that $f'$ is still  in $\F_P$ by construction. We reach the following picture, where the important vertices and edges are drawn, while the dashed sections represent parts of the trees whose precise structure is not important.
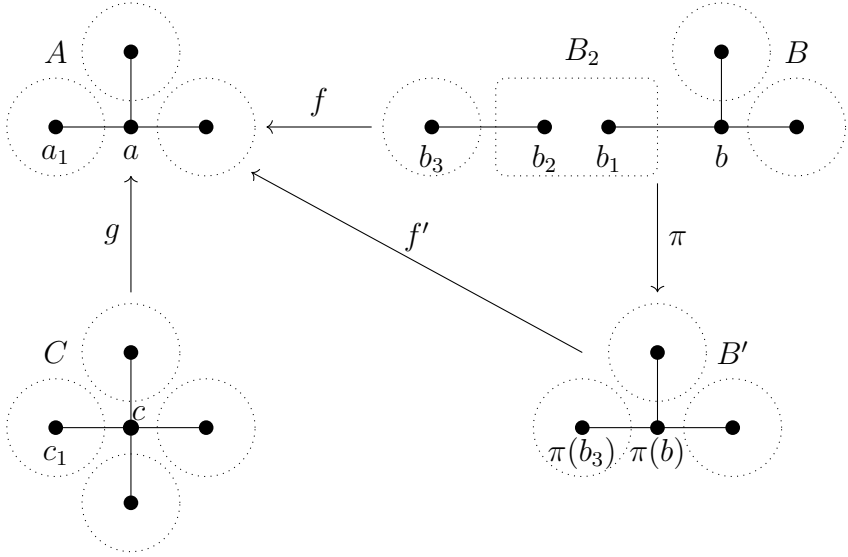
\begin{figure}
        \centering
		\begin{tikzpicture}
		\node at (0,1) {$A$};
		\draw [dotted] (0,0) circle (0.65cm);
		\draw [dotted] (2,0) circle (0.65cm);
		\draw [dotted] (1,1) circle (0.65cm);
		\draw (0,0) -- (2,0);
		\draw (1,0) -- (1,1);
		\node [fill,circle,scale=0.5,label=below:$a$] at (1,0) {};
		\node [fill,circle,scale=0.5,label=below:$a_1$] at (0,0) {};
		\node [fill,circle,scale=0.5] at (1,1) {};
		\node [fill,circle,scale=0.5] at (2,0) {};
		
		\draw [->] (4.2,0) -- (2.8,0) node[midway,above] {$f$};
		
		\node at (9.85,1) {$B$};
		\node at (7,1) {$B_2$};
		\draw [dotted] (5,0) circle (0.65cm);
		\draw (5,0) -- (6.5,0);
		\draw (7.35,0) -- (8.85,0);
		\draw[rounded corners,dotted] (5.85, -0.65) rectangle (8, 0.65) {};
		\node [fill,circle,scale=0.5,label=below:$b_3$] at (5,0) {};
		\node [fill,circle,scale=0.5,label=below:$b_2$] at (6.5,0) {};
		\node [fill,circle,scale=0.5,label=below:$b_1$] at (7.35,0) {};
		\node [fill,circle,scale=0.5,label=below:$b$] at (8.85,0) {};
		\draw (8.85,0) -- (9.85,0);
		\draw (8.85,0) -- (8.85,1);
		\draw [dotted] (8.85,1) circle (0.65cm);
		\draw [dotted] (9.85,0) circle (0.65cm);
		\node [fill,circle,scale=0.5] at (8.85,1) {};
		\node [fill,circle,scale=0.5] at (9.85,0) {};
		
		\draw [->] (1,-2.2) -- (1,-0.65) node[midway,left] {$g$};
		
		\node at (0,-3) {$C$};
		\draw [dotted] (0,-4) circle (0.65cm);
		\draw [dotted] (2,-4) circle (0.65cm);
		\draw [dotted] (1,-3) circle (0.65cm);
		\draw [dotted] (1,-5) circle (0.65cm);
		\draw (0,-4) -- (2,-4);
		\draw (1,-5) -- (1,-3);
		\node [fill,circle,scale=0.55,label={[xshift=0.1cm, yshift=-0.15cm]$c$}] at (1,-4) {};
		\node [fill,circle,scale=0.5,label=below:$c_1$] at (0,-4) {};
		\node [fill,circle,scale=0.5] at (1,-3) {};
		\node [fill,circle,scale=0.5] at (2,-4) {};
		\node [fill,circle,scale=0.5] at (1,-5) {};
		
		\node at (9,-3) {$B'$};
		\draw [dotted] (7,-4) circle (0.65cm);
		\draw (7,-4) -- (9,-4);
		\node [fill,circle,scale=0.5,label={[yshift=-0.8cm]$\pi(b_3)$}] at (7,-4) {};
		\node [fill,circle,scale=0.5,label={[yshift=-0.8cm]$\pi(b)$}] at (8,-4) {};
		\draw (8,-4) -- (8,-3);
		
		\draw [dotted] (8,-3) circle (0.65cm);
		\draw [dotted] (9,-4) circle (0.65cm);
		\node [fill,circle,scale=0.5] at (8,-3) {};
		\node [fill,circle,scale=0.5] at (9,-4) {};
		
		\draw [->] (8,-0.75) -- (8, -2.2) node[midway,right] {$\pi$};
		
		\draw [->] (7,-3) -- (2.6,-0.6) node[midway,above] {$f'$};
		\end{tikzpicture}
        \caption{The construction of $B'$ from $B$}
        \label{fig: amalgamation}
\end{figure}
	Since $|E(B')|<|E(B)|$ we can amalgamate $f'$ and $g$ over $A$ by inductive hypothesis, so we can find $D'\in\F_P$ and epimorphisms $h'_1\colon D'\to B'$ and $h'_2\colon D'\to C$ in $\F_P$ such that $f'\circ h_1'=g\circ h_2'$. Now we want to construct $D$ together with $h_1\colon D\to B$ and $h_2\colon D\to C$ from $D'$, $h'_1$ and $h'_2$, the intuitive idea is that we just need to paste back $B_2$ in $D'$ in the right spot, in particular $B_2$ should be inserted on the edge of $D'$ that corresponds to the edge $\langle \pi(b),\pi(b_3)\rangle$ in $B'$ (analogously to how $B$ is obtained from $B'$ by inserting $B_2$ on the $\langle \pi(b),\pi(b_3)\rangle$ edge). Let $a_1$ be the unique vertex of $A$ with $\langle a,a_1\rangle\in E(A)$ and $f^{-1}(a_1)\subseteq B_1$ and let $A_1$ be the connected component of $A\setminus \{a\}$ containing $a_1$. Let $c\in C$ witness the (weak) coherence of $g$ at $a$ and let $C_1$ be the connected component of $C\setminus \{c\}$ containing $g^{-1}(a_1)$. Let $c_1\in C_1$ be the unique vertex of $C$ with $\langle c,c_1\rangle\in E(C)$. Let $d\in D'$ witness the (weak) coherence of $h'_2$ at $c$ and let $D_1$ be the connected component of $D'\setminus \{d\}$ containing $(h'_2)^{-1}(c_1)$. Note that, since $f'\circ h'_1=g\circ h'_2$ and witnesses for (weak) coherence are unique, $d$ also witnesses the (weak) coherence of $h'_1$ at $b$ and $(h'_1)^{-1}(\pi(B_1))\subseteq D_1$ by Remark \ref{remark: witnesses of coherence}. Now let $d_1\in \{d\}\cup(D_1\cap (h'_1)^{-1}(\pi(b)))$ and $d_2\in D_1\cap (h'_1)^{-1}(\pi(b_3))$ be the unique such vertices with $\langle d_1,d_2\rangle\in E(D')$. We can now construct $D$ by inserting $B_2$ between $d_1$ and $d_2$, formally $V(D)=V(D')\sqcup V(B_2)$ and $$E(D)=(E(D')\setminus\{\langle d_1,d_2\rangle\})\cup E(B_2)\cup \{\langle b_1,d_1\rangle,\langle b_2,d_2\rangle\},$$
	note that $B_2\in\F_P$ and we didn't change the degrees of $d_1$ and $d_2$, so $D\in\F_P$ as well. Now consider the projection $\pi'\colon D\to D'$ collapsing $B_2$ to $d_1$, so $\pi'(x)=d_1$ if $x\in B_2$ and the identity otherwise, which is also an epimorphism in $\F_P$. We are now in a situation that looks like Figure \ref{fig: amalgamation2}.
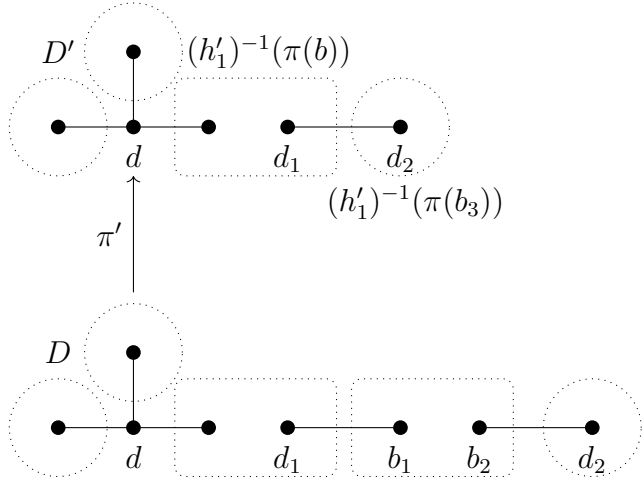
\begin{figure}
    \centering
		\begin{tikzpicture}
		\node at (0,1) {$D'$};
		\node at (2.8,1) {$(h_1')^{-1}(\pi(b))$};
		\node at (4.75,-1) {$(h_1')^{-1}(\pi(b_3))$};
		\draw [dotted] (0,0) circle (0.65cm);
		\draw [dotted] (1,1) circle (0.65cm);
		\draw (0,0) -- (2,0);
		\draw (1,0) -- (1,1);
		\node [fill,circle,scale=0.5,label=below:$d$] at (1,0) {};
		\draw[rounded corners,dotted] (1.55, -0.65) rectangle (3.7, 0.65) {};
		\node [fill,circle,scale=0.5] at (2,0) {};
		\node [fill,circle,scale=0.5,label=below:$d_1$] at (3.05,0) {};
		\node [fill,circle,scale=0.5] at (0,0) {};	
		\node [fill,circle,scale=0.5] at (1,1) {};
		\draw (3.05,0) -- (4.55,0);
		\node [fill,circle,scale=0.5,label=below:$d_2$] at (4.55,0) {};
		\draw [dotted] (4.55,0) circle (0.65cm);
		
		\node at (0,-3) {$D$};
		\draw [dotted] (0,-4) circle (0.65cm);
		\draw [dotted] (1,-3) circle (0.65cm);
		\draw (0,-4) -- (2,-4);
		\draw (1,-4) -- (1,-3);
		\node [fill,circle,scale=0.5,label=below:$d$] at (1,-4) {};
		\draw[rounded corners,dotted] (1.55, -4.65) rectangle (3.7, -3.35) {};
		\draw[rounded corners,dotted] (3.9, -4.65) rectangle (6.05, -3.35) {};
		\node [fill,circle,scale=0.5] at (2,-4) {};
		\node [fill,circle,scale=0.5,label=below:$d_1$] at (3.05,-4) {};
		\node [fill,circle,scale=0.5] at (0,-4) {};	
		\node [fill,circle,scale=0.5] at (1,-3) {};
		\draw (3.05,-4) -- (4.55,-4);
		\node [fill,circle,scale=0.5,label=below:$b_1$] at (4.55,-4) {};
		\node [fill,circle,scale=0.5,label=below:$b_2$] at (5.6,-4) {};
		\draw [dotted] (7.1,-4) circle (0.65cm);
		\draw (5.6,-4) -- (7.1,-4);
		\node [fill,circle,scale=0.5,label=below:$d_2$] at (7.1,-4) {};
		
		\draw [->] (1,-2.2) -- (1,-0.65) node[midway,left] {$\pi'$};

		\end{tikzpicture}
  \caption{The construction of $D'$ from $D$}
  \label{fig: amalgamation2}
\end{figure}
	Define $h_2=h_2'\circ \pi'$ and $h_1\colon D\to B$ by 
	$$h_1(x)=\begin{cases}
	x & \text{if }x\in B_2 \\
	h'_1(x) & \text{otherwise},
	\end{cases}$$
	where we identify points of $B\setminus B_2$ with their image in $B'$ through $\pi$. Note that $h_1$ is an epimorphism in $\F_P$, indeed it is (weakly) coherent at points of $B_2$ because they are witnesses for their own (weak) coherence and it is (weakly) coherent elsewhere because $h'_1$ is. Moreover we have $h_1'\circ\pi'=\pi\circ h_1$ so that the following diagram commutes:
	$$\begin{tikzcd}
	D \arrow[r, "h_1"] \arrow[d, "\pi'"']& B \arrow[d, "\pi"'] &   \\
	D' \arrow[r, "h_1'"] \arrow[rd, "h_2'"', bend right] & B' \arrow[r, "f'"]                             & A \\
	& C \arrow[ru, "g"', bend right]                 &  
	\end{tikzcd}$$
	It only remains to check that $(f\circ h_1)(x)=(g\circ h_2)(x)$ for all $x\in D$, for which there are two cases: 
	\begin{itemize}
		\item If $x\in B_2$ then $h_1(x)=x$ and $f(h_1(x))=a$, while $h_2(x)=h'_2(\pi'(x))= h_2'(d_1)$, but $g\circ h_2'=f'\circ h_1'$ and $f'(h_1'(d_1))=a$ so $g(h_2(x))=a$. 
		\item Otherwise $h_1(x)$ and $h_2(x)$ agree with $h'_1(x)$ and $h'_2(x)$ respectively, so we immediately obtain $f(h_1(x))=g(h_2(x))$ by construction.
	\end{itemize}
	This concludes the argument when $B_2\neq B_1$. 
	
	\textbf{Case 1b: $\mathbf{B_2=B_1}$.} Let $b\in B$ be the witness for the (weak) coherence of $f$ at $a$ and let $c\in C$ be the witness for the (weak) coherence of $g$ at $a$. Suppose now that for every component $\hat B$ of $B\setminus\{b\}$ that meets $f^{-1}(a)$, we have $\hat B\subseteq f^{-1}(a)$, otherwise modulo choosing a different component $B_1$ of $B\setminus\{b\}$ we would be in the previous case. Similarly we can assume that for every component $\hat C$ of $C\setminus\{c\}$ that meets $g^{-1}(a)$, we have $\hat C\subseteq g^{-1}(a)$, since otherwise we would be in the previous case modulo renaming $B$ and $C$. Let $a_1^A,\ldots,a_n^A$ enumerate the connected components of $A\setminus\{a\}$. By the weak coherence of $f$ we can find $a_1^B,\ldots,a_n^B$ components of $B\setminus\{b\}$ such that $f^{-1}(a_i^A)\subseteq a_i^B$ for all $i$. We define $a_1^C,\ldots,a_n^C$ analogously. Now let $B_1=f^{-1}(a)$ and $C_1=g^{-1}(a)$.
	
	The idea now is to amalgamate the diagram \begin{equation}a_i^B\cup\{b\}\to \{a\}\cup\{a_i^A\}\leftarrow a_i^C\cup\{c\}\label{diagram}\end{equation} for every $i$, then to obtain $D_1$ from $B_1$ and $C_1$ by the joint projection property, and to glue everything back together at the end. 
	By inductive hypothesis we can amalgamate diagrams of the form \eqref{diagram} (note that the trees involved are still members of $\F_P$ and so are the maps) obtaining an amalgam $a_i^D$ with maps $h_1^i\colon a_i^D\to a_i^B\cup\{b\}$ and $h_2^i\colon a_i^D\to a_i^C\cup\{c\}$. Note that, since $b$ and $c$ are the only preimages of $a$ in $a_i^B\cup\{b\}$ and $a_i^C\cup\{c\}$ respectively, there must be an endpoint $x_i\in a^D_i$ such that $h_1^i(x_i)=b$ and $h_2^i(x_i)=c$ for every $i$. 
	
	We now need to deal with $B_1$ and $C_1$. Enumerate as $\widehat{c_1},\ldots,\widehat{c_r}$ the components of $C_1\setminus\{c\}$ and as $\widehat{b_1},\ldots,\widehat{b_s}$ those of $B_1\setminus\{b\}$. Assume without loss of generality $s\leq r$. For $i\leq r$, let $c_i$ be the only vertex of $\widehat{c_i}$ such that $\langle c_i,c\rangle\in E(C)$ and let $b_i$, for $i\leq s$, be defined analogously. Consider now the tree $D_1$ built from $$\left(\bigsqcup_{i\leq r}\widehat{c_i}\right)\sqcup\left(\bigsqcup_{i\leq s}\widehat{b_i}\right)$$ by adding $r+1$ new vertices $z,z_1,\ldots,z_r$ and edges $\langle z_i,c_i\rangle$ for all $i\leq r$, $\langle z_i,b_i\rangle$ for all $i\leq s$ and $\langle z,z_i\rangle$ for all $i\leq r$.

	Finally build a tree $D$ by identifying $z\in D_1$ and $x_i\in a_i^D$ for every $i$, and calling $d$ the image of all those points in $D$, add neighbours to $ z_i, b_i$ and $c_i$ until they have order in $P$. Define a map $h_1\colon D\to B$ by $$h_1(y)=\begin{cases}
	h_1^i(y) & \text{if } y\in a_i^D \\
	y & \text{if } y\in\widehat{b_i}  \\
	b & \text{otherwise} 
	\end{cases}.$$
	Similarly define a map $h_2\colon D\to C$ by 
	$$h_2(y)=\begin{cases}
	h_2^i(y) & \text{if } y\in a_i^D \\
	y & \text{if } y\in \widehat{c_i} \\
	c & \text{otherwise}
	\end{cases}.$$
	It is easy to check that $D$ together with the maps $h_1,h_2$ is an amalgam of $B$ and $C$ over $A$.
	
	This concludes the argument for a ramification point $a\in A$. 
	
	\textbf{Case 2: $\mathbf{a}$ is an endpoint.} Suppose now that $a\in A$ is an endpoint. The construction is essentially an easier version of the one we just carried out for the case in which $B_1\neq B_2$. Since $B\setminus f^{-1}(a)$ is connected we can let $b\in f^{-1}(a)$ be the unique vertex for which there is a $b'\in B\setminus f^{-1}(a)$ with $\langle b,b'\rangle\in E(B)$ and let $B_2$ be $f^{-1}(a)$. As in the previous case consider $\pi\colon B\to B'$ collapsing $B_2$ to $b$ and let $f'\colon B'\to A$ be the unique map with $f'\circ\pi=f$. Moreover let $D'$ be an amalgam of $f'$ and $g$ over $A$ with maps $h_1'\colon D'\to B'$ and $h_2'\colon D'\to C$ such that $g\circ h_2'=f'\circ h_1'$. Once again we build $D$ from $D'$ by glueing $B_2$ to $D'$ in the right spot.  Let $\alpha\in(h_1')^{-1}(b)$ be an endpoint and take the disjoint union of $B_2$ and $D'$, identifying $\alpha\in D'$ and $b\in B_2$. Let $d$ be the image of $\alpha$ and $b$ through this identification.
	
	As in the previous case, we obtain a map $\pi'\colon D\to D'$ by mapping the whole of $B_2$ to $d$, and a map $h_1\colon D\to B$ defined by   
	$$h_1(x)=\begin{cases}
	x & \text{if }x\in B_2 \\
	h'_1(x) & \text{otherwise}.
	\end{cases}$$
	Now by considering $h_2=h_2'\circ\pi'$ we see that $D$ with the maps $h_2\colon D\to C$ and $h_1\colon D\to B$ defined above gives an amalgam of $f$ and $g$ over $A$, as desired.
\end{proof}

As mentioned before Lemma \ref{lemma: amalgamation is satisfied}, the construction in its proof also shows the following:

\begin{lemma}
	The family $\mathcal G_P$ satisfies the projective amalgamation property, so it is a projective \Fraisse family.
\end{lemma}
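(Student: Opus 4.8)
The plan is to run the very same inductive construction used in the proof of Lemma~\ref{lemma: amalgamation is satisfied}, now applied to a diagram $B\xrightarrow{f}A\xleftarrow{g}C$ with all three trees and both maps in $\mathcal G_P$, and to verify only the two extra requirements that distinguish $\mathcal G_P$ from $\F_P$: that the amalgam $D$ has all of its vertices either endpoints or of order in $P$, and that the amalgamating maps $h_1\colon D\to B$ and $h_2\colon D\to C$ are \emph{coherent}, not merely weakly coherent. All the remaining axioms of a projective \Fraisse family are already in place: countability up to isomorphism, the presence of identities, and closure under composition follow from Remark~\ref{remark: composition of coherent is coherent}, while the joint projection property is exactly Lemma~\ref{lemma: joint projection property}, whose proof was written so as to land in $\mathcal G_P$. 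As in Lemma~\ref{lemma: amalgamation is satisfied} I would induct on $|E(B)|+|E(C)|$ and reduce, after collapsing, to the endpoint and ramification-point cases.

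First I would check that $D\in\mathcal G_P$. Every operation appearing in the construction is order-preserving on surviving vertices: collapsing a subtree $B_2$ to its attaching vertex $b$ leaves the order of $b$ and of $b_3$ unchanged, and inserting a subtree along an edge $\langle d_1,d_2\rangle$ does not alter the degrees of $d_1$ or $d_2$, as already recorded in the proof of Lemma~\ref{lemma: amalgamation is satisfied}. Whenever the construction introduces a genuinely new branch vertex, namely the vertices $z,z_1,\ldots,z_r$ of Case~1b, it explicitly adds neighbours until that vertex has order in $P$. Since $A,B,C\in\mathcal G_P$ start with all orders in $P$, it follows that $D$ does as well.

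Next I would verify coherence of $h_1$ and $h_2$. Here the input hypothesis is stronger than in the $\F_P$ case: every witness of the coherence of $f$ or $g$ at a ramification point of order $n\in P$ itself has order exactly $n$, with the associated assignment of branch components being a bijection. The collapsing map $\pi$ is coherent, $f'$ inherits coherence from $f$, and by Remark~\ref{remark: witnesses of coherence} and Remark~\ref{remark: composition of coherent is coherent} coherence propagates through the pasting steps in exactly the way weak coherence did for $\F_P$; no step changes the order of a witness or the number of branch components at a ramification point, so the bijection condition is maintained throughout.

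The step that will need genuine care is Case~1b, where the two full fibres $B_1=f^{-1}(a)$ and $C_1=g^{-1}(a)$ are amalgamated componentwise and then glued together along the fresh vertices $z,z_1,\ldots,z_r$, with the possibility $s<r$. I expect this to be the main obstacle: one must confirm that at the glued vertex $d$, the common image of $z$ and the endpoints $x_i$, the orders on the two sides match and a bijection of branch components witnessing coherence, rather than just an injection witnessing weak coherence, is available for both $h_1$ and $h_2$. Because the componentwise amalgams $a_i^D$ are coherent by the inductive hypothesis and the new vertices are padded to order in $P$, the required bijection does exist, and I would write this case out explicitly to make the coherence at $d$ transparent.
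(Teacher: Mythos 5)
Your overall strategy is exactly the paper's: amalgamation is proved in detail only for $\F_P$, and one then checks that the same construction, applied to a diagram in $\mathcal G_P$, stays inside $\mathcal G_P$. Your verifications for Case 1a and Case 2 are correct (collapsing $B_2$ and re-inserting it preserves the orders of all surviving vertices, and coherence propagates through $\pi$, $f'$, $\pi'$ and the inductive hypothesis). However, the step you yourself single out as the crux --- coherence at the glued vertex $d$ in Case 1b --- is false as stated, and no amount of care will make it work. Writing $n=\ord(a)$, the vertex $b$ has exactly $n+s$ neighbours in $B$ (one for each component $a_i^B$ plus one for each of the $s$ components $\widehat{b_i}$ of $f^{-1}(a)\setminus\{b\}$), while the glued vertex $d$ has $n+r$ neighbours in $D$ (one from each $a_i^D$ plus $z_1,\ldots,z_r$). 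Since $d$ does witness the \emph{weak} coherence of $h_1$ at $b$, and witnesses of weak coherence are unique by Remark \ref{remark: witnesses of coherence}, $d$ is the only candidate witness for coherence of $h_1$ at $b$; so whenever $s<r$ the map $h_1$ is weakly coherent but not coherent at $b$, and your claimed bijection does not exist. Padding $z_i$, $b_i$, $c_i$ to orders in $P$ cannot repair this, since padding only increases the discrepancy on $d$'s side.

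What actually saves the lemma --- and what the paper's one-line proof implicitly relies on --- is that Case 1b can never occur when the input maps are coherent. If $f$ is coherent at the ramification point $a$, the components of $B\setminus\{b\}$ are in bijection with those of $A\setminus\{a\}$, so every component of $B\setminus\{b\}$ contains some $f^{-1}(A_i)$ and therefore is not contained in $f^{-1}(a)$. Hence any component of $B\setminus\{b\}$ meeting $f^{-1}(a)$ (such a component exists because $|f^{-1}(a)|>1$ and $f^{-1}(a)$ is connected) is not contained in $f^{-1}(a)$, which is precisely the hypothesis of Case 1a; equivalently, coherence forces $s=r=0$, so the hypothesis of Case 1b is incompatible with $|f^{-1}(a)|>1$. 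The correct completion of your argument is therefore not to verify coherence at $d$ in Case 1b, but to observe that for diagrams in $\mathcal G_P$ the induction only ever passes through Case 1a and Case 2, where your verification is already complete.
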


Now that we know that $\F_P$ and $\mathcal G_P$ are projective \Fraisse families let $\mathbb F_P$ and $\G_P$ denote their projective \Fraisse limits. We know from Lemma \ref{lemma: splitting implies prespace} and Theorem \ref{thm: limit is dendrite} that $|\mathbb F_P|$ and $|\G_P|$ are dendrites, and in the rest of this section we will identify which dendrites they are exactly. Let's start with the easiest case, namely $\omega\not\in P$, so that we're looking at the family $\mathcal G_P$ with projective \Fraisse limit $|\G_P|$.

\begin{thm}\label{thm: limit of G_P}
	Fix $P\subseteq\{3,4,5,\ldots\}$. Then $|\G_P|\cong W_P$.
\end{thm}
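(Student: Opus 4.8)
The plan is to verify the two defining properties of $W_P$ and then invoke its characterization \cite{CD}: a dendrite is homeomorphic to $W_P$ as soon as (i) every ramification point has order in $P$, and (ii) for each $p\in P$ the ramification points of order $p$ are arcwise dense. That $|\G_P|$ is a dendrite is already guaranteed by Lemma \ref{lemma: splitting implies prespace} together with Theorem \ref{thm: limit is dendrite} (using the modified edge splitting available in $\mathcal G_P$). Throughout I fix a \Fraisse sequence $\langle G_i\mid i<\omega\rangle$ with bonding maps $f^m_n$ and projections $f_k\colon\G_P\to G_k$, and I write $a_k=f_k(a)$.

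For property (i), let $q'\in|\G_P|$ be a ramification point and let $q=(q_i)$ be its unique $\pi$-preimage (Lemma \ref{lemmaw: uniqueness}). Since the epimorphisms of $\mathcal G_P$ are coherent, hence weakly coherent, Theorem \ref{thmw: splitting implies coherent} applies and $q$ is a point of weak coherence. Thus there is $k_0$ so that for all $l>k_0$ the map $f^{l+1}_l$ is weakly coherent at $q_l$ with witness $q_{l+1}$; but $f^{l+1}_l$ is in fact coherent and witnesses are unique (Remark \ref{remark: witnesses of coherence}), so $\ord(q_{l+1})=\ord(q_l)$ for all $l>k_0$. Hence $\ord(q_l)$ stabilizes to a value $n$, and since each $q_l$ is a non-endpoint of $G_l\in\mathcal G_P$ we get $n\in P$. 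By Remark \ref{rmk: order of weakly coherent}, $\ord(q')=n\in P$.

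For property (ii), fix $p\in P$ and distinct $a',b'\in|\G_P|$ with representatives $a,b\in\G_P$; I must produce a coherent point $z\in\G_P$ of order $p$ separating $a$ from $b$, so that $\pi(z)$ is an order-$p$ ramification point on the arc $[a',b']$. Since each bonding map sends edges to edges, the distances $d_{G_k}(a_k,b_k)$ are nondecreasing in $k$, and they cannot be eventually equal to $1$: by monotonicity that would force $d_{G_k}(a_k,b_k)=1$ for all $k$, i.e. $\langle a,b\rangle\in E(\G_P)$ and $a'=b'$. So I may fix $k$ with the $G_k$-path $a_k=t_0,t_1,\dots,t_L=b_k$ of length $L\ge 2$. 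I then split $\langle t_0,t_1\rangle$: let $H$ be obtained from $G_k$ by inserting a new vertex $w$ between $t_0$ and $t_1$ with $p-2$ further leaves at $w$, so $\ord_H(w)=p$, and let $\phi\colon H\to G_k$ collapse $w$ and its leaves to $t_1$ and be the identity elsewhere. One checks $\phi$ is monotone and coherent (the only nontrivial fibre is over $t_1$, where $t_1$ itself witnesses coherence), so $\phi\in\mathcal G_P$. By the defining property of a \Fraisse sequence there are $n\ge k$ and an epimorphism $\psi\colon G_n\to H$ in $\mathcal G_P$ with $\phi\circ\psi=f^n_k$. As $\psi$ is coherent at $w$, it has a witness $w'\in G_n$ with $\ord(w')=p$; setting $z_n=w'$, $z_i=f^n_i(w')$ for $i<n$, and taking $z_{m+1}$ to be the witness of coherence of $f^{m+1}_m$ at $z_m$ for $m\ge n$ (possible as each $z_m$ has order $p\ge 3$), I obtain a coherent point $z=(z_i)$ of constant order $p$. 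By Theorem \ref{thmw: coherent implies splitting} and Remark \ref{rmk: order of weakly coherent}, $\pi(z)$ is a ramification point of order $p$.

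It remains to see that $z$ separates $a$ from $b$, which is the crux of the argument and the step I expect to require the most care. Put $g_H:=\psi\circ f_n\colon\G_P\to H$, a monotone map with $g_H(z)=w$. Because $t_0,t_L\neq t_1=\phi(w)$, the fibres $\phi^{-1}(t_0),\phi^{-1}(t_L)$ are singletons, whence $g_H(a)=t_0$ and $g_H(b)=t_L$; and $w$ separates $t_0$ from $t_L$ in $H$, as it is interior to the $H$-path $t_0-w-t_1-\dots-t_L$. Coherence of $\psi$ at $w$ with witness $w'=z_n$ matches the components of $H\setminus\{w\}$ injectively (via $\psi^{-1}$) with those of $G_n\setminus\{z_n\}$; since $a_n$ and $b_n$ lie over distinct components of $H\setminus\{w\}$, they lie in distinct components of $G_n\setminus\{z_n\}$, and hence $a,b$ lie in distinct components of $\G_P\setminus\{z\}$. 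Thus $z$ separates $a$ from $b$, so $\pi(z)$ lies on $[a',b']$, which gives arcwise density and completes the identification $|\G_P|\cong W_P$. Note that the choice $L\ge 2$ is exactly what places $w$ strictly inside the path with both $a_k,b_k$ outside the collapsed fibre — this is the only place the nondecreasing-distance observation is needed, and without it the images $g_H(a),g_H(b)$ could collapse onto $w$ and the separation argument would break down.
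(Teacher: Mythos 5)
Your proof is correct and follows essentially the same route as the paper: property (i) via Theorem \ref{thmw: splitting implies coherent}, uniqueness of witnesses and Remark \ref{rmk: order of weakly coherent}, and property (ii) by splitting an edge on the path between $f_k(a)$ and $f_k(b)$, pulling the new order-$p$ vertex back through the \Fraisse sequence as a chain of coherence witnesses (Theorem \ref{thmw: coherent implies splitting}), and concluding that this point separates $a$ from $b$. The only difference is a refinement in the density step: you first ensure the path has length at least $2$ and collapse the new vertex $w$ onto the interior vertex $t_1$, so that the fibres over $a_k$ and $b_k$ are singletons and the images of $a$ and $b$ in $H$ are forced into distinct components of $H\setminus\{w\}$, whereas the paper splits an arbitrary edge $\langle w,y\rangle$ and collapses onto $w$, leaving implicit the (easily fixable) case $w=f_i(a)$, in which the image of $a$ in $H$ is not a priori determined.
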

\begin{proof}
	For the remainder of this proof we fix a \Fraisse sequence $G_i$ for $\mathcal G_P$, with epimorphisms $f^i_j\colon G_i\to G_j$. We consider $\G_P$ as a subspace of $\prod G_i$, and call $f_i\colon\G_P\to G_i$ the canonical projections. We know that $|\G_P|$ is a dendrite, so we need to check that all ramification points of $|\G_P|$ have order in $P$, and that for all $p\in P$ the set $$\{x\in|\G_P|\mid \ord(x)=p\}$$ is arcwise dense in $|\G_P|$. The first part follows quickly from previous results: if $x\in|\G_P|$ is a ramification point, then $\pi^{-1}(x)=(x_i)$ is a coherent point in $\G_P$ by Theorem \ref{thmw: splitting implies coherent}, where $\pi\colon\G_P\to|\G_P|$ is the topological realization. Note that $\ord(x_i)\in P$ for large enough $i$, so by Remark \ref{rmk: order of weakly coherent} we have that, for $k$ big enough, $\ord(x)=\ord(x_k)\in P$. It remains to show that for every $p\in P$ the set of ramification points in $|\G_P|$ is arcwise dense in $|\G_P|$. Fix $p\in P$ and $X\subseteq|\G|$ an arc. Let $a',b'\in X$ be two distinct points and fix $a\in\pi^{-1}(a'),b\in\pi^{-1}(b')$. Let $i$ be big enough to have $f_i(a)\neq f_i(b)$ and let $\langle w,y\rangle$ be any edge on the unique arc in $G_i$ joining $f_i(a)$ and $f_i(b)$. Consider the graph $H$ obtained from $G_i$ by splitting the edge $\langle w,y\rangle$ into two edges $\langle w,x^i_p\rangle$,$\langle x^i_p,y\rangle$ and by adding $p-2$ new vertices $z_1,\ldots,z_{p-2}$ with edges $\langle x^i_p,z_j\rangle$ for all $1\leq j\leq p$. Note that there is an epimorphism $f^H_i\colon H\to G_i$ in $\mathcal G_P$ with $f^H_i(v)=w$ for all $v\in V(H)\setminus V(G_i)$ and the identity otherwise, and note that in $H$ there is a point of order $p$ on the arc connecting $f_i(a)$ to $f_i(b)$. We can now find $k$ big enough so that there is an epimorphism $f^k_H\colon G_k\to H$ in $\mathcal G_P$ with $f^H_i\circ f^k_H=f^k_i$ and note that in $G_k$ there is a point of order $p$ on the arc connecting $f_k(a)$ and $f_k(b)$, namely the point $x^k_p\in (f^k_H)^{-1}(x^i_p)$ witnessing the coherence of $f^k_H$ at $x^i_p$. This is because $f_k(a)$, $f_k(b)$ are in different components of $G_k\setminus\{x_p^k\}$ by coherence, and $G_k$ is uniquely arcwise connected. Now for all $m>k$ let $x^m_p\in G_m$ be a point witnessing the coherence of $f^m_{m-1}\colon G_m\to G_{m-1}$ at $x^{m-1}_p$ and note that for all $m>k$, $x^m_p$ is a point of order $p$ on the unique arc in $G_m$ joining $f_m(a)$ and $f_m(b)$. Since $x=(x^m_p)_{m\in\Bbb N}\in\G_P$ is a coherent sequence it determines a ramification point of order $p$ in $|\G_P|$ by Theorem \ref{thmw: coherent implies splitting} and Remark \ref{rmk: order of weakly coherent}. By construction $a$ and $b$ belong to different connected components of $\G_P\setminus\{x\}$, and since $\G_P$ is hereditarily unicoherent this implies that any connected set in $\G_P$ containing both $a$ and $b$ must contain also $x$. In particular $x\in\pi^{-1}(X)$ and so $\pi(x)\in X$.
\end{proof}

We can now analyze what happens in the case $\omega\in P$, so that we are looking at the families $\mathcal F_P$ with projective \Fraisse limit $\mathbb F_P$ instead. The situation is more subtle.
\begin{thm}\label{thm: limit of F_P}
	Fix $P\subseteq\{3,4,5,\ldots,\omega\}$. If $\{3,4,5,\ldots,\omega\}\setminus P$ is infinite and $\omega\in P$, then $|\mathbb F_P|\cong W_P$.
\end{thm}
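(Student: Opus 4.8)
The plan is to verify the two characterizing properties of $W_P$ for $|\mathbb{F}_P|$: that every ramification point has order in $P$, and that for every $p\in P$ the ramification points of order $p$ are arcwise dense. Fix a \Fraisse sequence $\langle G_i\rangle$ with bonding maps $f^m_n\colon G_m\to G_n$ and projections $f_i\colon\mathbb F_P\to G_i$, and write $\pi\colon\mathbb F_P\to|\mathbb F_P|$ for the topological realization. The starting observation is that, by conditions (2) and (3) in the definition of $\mathcal F_P$, every epimorphism in $\mathcal F_P$ is weakly coherent at each of its ramification points, so $\mathcal F_P$ is a family of trees with weakly coherent epimorphisms and both Theorem \ref{thmw: coherent implies splitting} and Theorem \ref{thmw: splitting implies coherent} apply. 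Consequently $\pi$ and $\pi^{-1}$ set up a bijection between ramification points of $|\mathbb F_P|$ and points of weak coherence $x=(x_i)$ of $\mathbb F_P\subseteq\prod G_i$, and by Remark \ref{rmk: order of weakly coherent} the order of such a point is either a finite value $n$ at which $\ord(x_i)$ stabilizes, or $\infty$ if $\ord(x_i)\to\infty$.

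For the order condition, I first note that the unbounded case yields order $\infty\in P$, and a stabilized order $n\in P$ is harmless, so the only thing to rule out is a point of weak coherence whose order stabilizes at some finite $n\notin P$, say $\ord(x_l)=n$ for all $l\ge k$. Here I would use that $\{3,4,\dots\}\setminus P$ is infinite to pick $n'>n$ with $n'\notin P$, and build $A\in\mathcal F_P$ from $G_l$ by attaching $n'-n$ new leaves at $x_l$, with the collapsing map $f\colon A\to G_l$; this $f$ lies in $\mathcal F_P$ and is weakly coherent at $x_l$ with witness of order $n'\notin P$ (so condition (3) is respected). The second clause in the definition of a \Fraisse sequence then provides $m$ and $\psi\colon G_m\to A$ in $\mathcal F_P$ with $f\circ\psi=f^m_l$; since the witnessed vertex has order $n'\notin P$, $\psi$ is weakly coherent there with a witness of order $\ge n'$, and by the composition and uniqueness statements in Remark \ref{remark: witnesses of coherence} this witness must be $x_m$. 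Hence $\ord(x_m)\ge n'>n$, contradicting stabilization. This is the step I expect to be the crux, as it is exactly where the infiniteness of the complement of $P$ enters and where conditions (2)--(3) conspire to control orders.

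For arcwise density, fix an arc $X\subseteq|\mathbb F_P|$, distinct $a',b'\in X$ with lifts $a,b\in\mathbb F_P$, and $i$ with $f_i(a)\ne f_i(b)$. When $p\in P$ is finite the argument is identical to that of Theorem \ref{thm: limit of G_P}: split an edge on the path from $f_i(a)$ to $f_i(b)$ in $G_i$ and attach $p-2$ leaves to the new vertex to create a vertex of order $p$; since $p\in P$ all the relevant maps are coherent at that vertex, so passing through the \Fraisse sequence yields a coherent point of constant order $p$ separating $a$ and $b$, whose image is the desired ramification point on $X$. For $p=\infty$ I would instead construct, along a subsequence of indices, vertices $v_j$ lying on the path between the images of $a$ and $b$ with $\ord(v_j)\notin P$ and $\ord(v_j)\to\infty$: given $v_j$ I attach leaves to raise its order to the next available value $n_{j+1}\notin P$ (available since the complement is infinite), obtaining a weakly coherent map in $\mathcal F_P$, and then factor it through the \Fraisse sequence to obtain $v_{j+1}$. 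By Remark \ref{remark: witnesses of coherence} and Remark \ref{remark: composition of coherent is coherent} the resulting sequence assembles into a point of weak coherence of $\mathbb F_P$ with $\ord(x_m)\to\infty$, which separates $a$ and $b$; by Remark \ref{rmk: order of weakly coherent} its image is a ramification point of infinite order on $X$. The essential point throughout is that $\ord\in P$ forces coherence and hence frozen order, so unbounded order can only be produced by remaining forever in the infinite set $\{3,4,\dots\}\setminus P$.
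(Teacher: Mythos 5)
Your proposal is correct and follows essentially the same route as the paper's proof: the same reduction of ramification points to weakly coherent points via Theorems \ref{thmw: coherent implies splitting} and \ref{thmw: splitting implies coherent}, the same leaf-attaching constructions pushed through the \Fraisse sequence, and the same verbatim reuse of the Theorem \ref{thm: limit of G_P} density argument for finite $p\in P$ together with the sequence $n_j\notin P$, $n_j\to\infty$, for ramification points of infinite order. The one place you go beyond the paper is the order condition: the paper's proof only distinguishes ``$\ord(x_i)$ eventually constant'' from ``$\ord(x_i)\to\infty$'' and treats the constant case as if the stabilized value were automatically in $P$, whereas you explicitly rule out stabilization at a finite $n\notin P$ by using the infiniteness of $\{3,4,\ldots\}\setminus P$ and the \Fraisse property to force the order past any $n'\notin P$ with $n'>n$ (via Remark \ref{remark: witnesses of coherence}) --- a step that is genuinely needed, as the paper's own remark about cofinite $P$ shows, and which your argument handles correctly.
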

\begin{proof}
	The proof is very similar to that of Theorem \ref{thm: limit of G_P}. For the remainder of this proof we fix a \Fraisse sequence $F_i$ for $\mathcal F_P$, with epimorphisms $f^i_j\colon F_i\to F_j$. We consider $\mathbb F_P$ as a subspace of $\prod F_i$, and call $f_i\colon\mathbb F_P\to F_i$ the canonical projections. We already know that $|\mathbb F_P|$ is a dendrite so we need to check that all ramification points of $|\mathbb F_P|$ have order in $P$, and that for all $p\in P$ the set $$\{x\in|\mathbb F_P|\mid \ord(x)=p\}$$ is arcwise dense in $|\mathbb F_P|$. The first part is very similar to the argument in the proof of Theorem \ref{thm: limit of G_P}: if $x\in|\mathbb F_P|$ is a ramification point, then $\pi^{-1}(x)=(x_i)$ is a weakly coherent point of $\mathbb F_P$ by Theorem \ref{thmw: coherent implies splitting}, where $\pi\colon\mathbb F_P\to|\mathbb F_P|$ is the topological realization. Now the way the family $\F_P$ is constructed allows two options: either $\ord(x_i)$ becomes constant for big enough $i$, so that $(x_i)$ is actually a coherent point of $\mathbb F_P$ and we are in the same case as in the proof of Theorem \ref{thm: limit of G_P}, or $\ord(x_i)\to\infty$ as $i\to\infty$, which by Remark \ref{rmk: order of weakly coherent} implies that $x$ is a ramification point of $|\Bbb F_P|$ of infinite order. In either case $\ord(x)\in P$. 
	
	It remains to show that for every $p\in P$ the set of ramification points in $|\mathbb F_P|$ is arcwise dense in $|\mathbb F_P|$. If $p\neq\omega$ we can repeat verbatim the argument given in the proof of Theorem \ref{thm: limit of G_P}, so we need to show that ramification points of infinite order are arcwise dense in $|\mathbb F_P|$. Fix $X\subseteq|\mathbb F_P|$ an arc, distinct $a',b'\in X$, $a\in\pi^{-1}(a'),b\in\pi^{-1}(b')$ and $i$ big enough to have $f_i(a)\neq f_i(b)$. Fix also a strictly increasing sequence $(n_j)_{j\in\Bbb N}$ of natural numbers, such that $n_j\not\in P$ for all $j$ and $n_j\to\infty$ as $j\to\infty$. Let $\langle w,y\rangle$ be any edge on the unique arc in $F_i$ joining $f_i(a)$ and $f_i(b)$. Consider the graph $H$ obtained from $F_i$ by splitting the edge $\langle w,y\rangle$ into two edges $\langle w,x_1\rangle$,$\langle x_1,y\rangle$ and by adding $n_1-2$ new vertices $z_1,\ldots,z_{n_1-2}$ with edges $\langle x_1,z_j\rangle$ for all $1\leq j\leq n_1-2$. Note that there is an epimorphism $f^H_i\colon H\to F_i$ in $\F_P$ with $f^H_i(v)=w$ for all $v\in V(H)\setminus V(F_i)$ and the identity otherwise, and note that in $H$ there is a point of order exactly $n_1$ on the arc connecting $f_i(a)$ to $f_i(b)$. We can now find $k$ big enough so that there is an epimorphism $f^k_H\colon F_k\to H$ in $\F_P$ with $f^H_i\circ f^k_H=f^k_i$ and note that in $F_k$ there is a point of order at least $n_1$ on the arc connecting $f_k(a)$ and $f_k(b)$, namely the point $x_2\in (f^k_H)^{-1}(x_1)$ witnessing the weak coherence of $f^k_H$ at $x_1$. Now we can iterate this construction, by considering the tree $H_2$ obtained from $F_k$ by adding at least $n_s-n_1-2$ neighbours to $x_2$, with the map $f\colon H_2\to F_k$ sending all of those new vertices to $x_2$, where $s$ is big enough to have $\ord(x_2)<n_s-n_1-2$. Then we can find some $l>k$ and an epimorphism $f^l_H\colon F_l\to F_H$ such that $f^{H_2}_k\circ f^l_{H_2}=f^l_k$ and as above we can find a point $x_3\in F_l$ of order at least $n_s$ on the arc between $f_l(a)$ and $f_l(b)$. Repeating this construction we build a weakly coherent sequence $(x_i)$ such that $\ord(x_i)\to\infty$ and $x_i$ lies on the unique arc in $F_i$ between $f_i(a)$ and $f_i(b)$. The same argument used at the end of the proof of Theorem \ref{thm: limit of G_P} allows us to conclude that the ramification point of infinite order $\pi(x_i)\in|\mathbb F_P|$ belongs to the arc $X$.
\end{proof}

\begin{remark}
	In Theorem \ref{thm: limit of F_P} we had to assume that $\{3,4,5,\ldots,\omega\}\setminus P$ is infinite, which was used to find a sequence $n_j$ of natural numbers with $\lim n_j\to\infty$ and $n_j\not\in P$, in order to construct weakly coherent points of $\mathbb F_P$ corresponding to ramification points of $|\mathbb F_P|$ of infinite order. When $P$ is cofinite in $\{3,4,5,\ldots,\omega\}$ the family $\mathcal F_P$ is still a projective \Fraisse family, as we showed above, but the problem is that the order of the points in weakly coherent sequences must stabilize instead of growing unboundedly. Indeed the techniques used in the proof of Theorem \ref{thm: limit of F_P} easily establish that $|\mathbb F_P|=W_{P'}$ where $P'=P\setminus\{\omega\}$ if $P=\{3,4,5,\ldots,\omega\}$ and $P'=(P\setminus\{\omega\})\cup \{a\}$ otherwise, where $a$ is the smallest integer such that for all $n>a$, $n\in P$.
\end{remark}

\section{\Fraisse categories and Projection-Embedding pairs}
In this section, we construct all generalized \Wazewski dendrites as projective \Fraisse limits by moving to the more general setting of \Fraisse categories developed by \Kubis in \cite{K}. Those are categories in which the amalgamation and joint embedding properties hold (when expressed in terms of the appropriate diagrams). As an application we recover a countable dense homogeneity result for $\End(W_P)$. We quickly recall some definitions from \cite{K}, but stress that the approach followed there is far more general than what we need. In particular, since we are only working with countable collections of finite objects (with finitely many morphisms between any two of them) we can ignore all the issues regarding existence of \Fraisse sequences and uniqueness of \Fraisse limits that arise when looking at the uncountable case. Of particular interest to us is section 6 of \cite{K} in which projection-embedding pairs are introduced as a tool to produce objects that are universal both in an injective and a projective sense.

\subsection{\Fraisse Categories}\hfill\\
The following definitions and results are taken from \cite[Section 2-3]{K}, where they are presented in a more general way, which works in the uncountable case as well.

As a warning let us point out that while \Kubis's approach is phrased in terms of injective \Fraisse limits we decided to phrase everything in terms of projective \Fraisse limits. There is no real difference between the two approaches, since it is enough to look at the opposite category $\K^{\mathrm{op}}$ to translate between the two.

Given a category $\K$ and two objects $a,b\in\K$, let $\K(a,b)$ denote the morphisms $a\to b$ in $\K$.

\begin{defn}
	Let $\K$ be a category. We say that $\K$ has the \emph{amalgamation property} if for all $a,b,c\in\K$ and all morphisms $f\in\K(b,a),g\in\K(c,a)$, there exist $d\in\K$ and morphisms $f'\in\K(d,b),g'\in\K(d,c)$ with $f\circ f'=g\circ g'$. We say that $\K$ is \emph{directed} or that $\K$ has the \emph{joint projection property} if for all $a,b\in\K$ there exists $c\in\K$ such that $\K(c,a)\neq\varnothing\neq\K(c,b)$.
\end{defn}

We now wish to define a category of projective sequences in $\K$, which we denote by $\sigma\K$, whose morphisms will be defined in a way that guarantees the following property. Suppose that $\bar{x},\bar{y}\in\sigma\K$ and $\K$ is embedded in a category in which those two sequences have a limit. Given an arrow in $\sigma\K(\bar{x},\bar{y})$ we want an induced arrow $\varprojlim \bar{x}\to\varprojlim\bar{y}$. Given $\bar{x}\in\sigma\K$ we denote $x(n)$ by $x_n$ and the morphism $x(n)\to x(m)$ for $n\geq m$ by $x^n_m$. By definition a sequence in $\K$ is a functor $\omega\to\K$, where we think about the former as a poset category with the reverse order, so a transformation $\bar{x}\to\bar{y}$ is by definition a natural transformation $F\colon\bar{x}\to\bar{y}\circ\varphi$, where $\varphi\colon\omega\to\omega$ is order preserving. To define 
an arrow $\bar{x}\to\bar{y}$ in $\sigma\K$ we need to identify some of those natural transformations.

\begin{defn}
	Let $F\colon\bar{x}\to\bar{y}\circ\varphi$ and $G\colon\bar{x}\to\bar{y}\circ\psi$ be two natural transformations. We say that $F$ and $G$ are equivalent if
	\begin{enumerate}
		\item For every $n$ there exist $m\geq n$ such that $\varphi(m)\geq\psi(n)$ and $$y_{\psi(n)}^{\varphi(m)}\circ F(m)=G(n)\circ x^m_n;$$
		\item For every $n$ there exist $m\geq n$ such that $\psi(m)\geq\varphi(n)$ and $$y^{\psi(m)}_{\varphi(n)}\circ G(m)=F(n)\circ x^m_n.$$
	\end{enumerate}
An arrow $\bar{x}\to\bar{y}$ in $\sigma\K$ is an equivalence class of natural transformations $\bar{x}\to\bar{y}$.
\end{defn}

Note that the category $\sigma\K$ in \Kubis's approach is the analogue of the class of structures $\mathcal F^\omega$ in the approach by Panagiotopolous and Solecki. We can now define a \Fraisse sequence for a category $\K$, which is the analogue of the \Fraisse sequence in the classical or projective \Fraisse limit construction. \Kubis defined in an analogous manner a category of sequences of length $\kappa$ for any cardinal $\kappa$. Similarly the following definition is a special case of the definition of a $\kappa$-\Fraisse sequence from \cite{K}.

\begin{defn}
	Let $\K$ be a category. A \Fraisse sequence in $\K$ is a projective sequence $\bar{u}\colon \omega\to\K$ satisfying:
	\begin{enumerate}
		\item For every $x\in\K$ there exist $n<\omega$ with $\K(u_n,x)\neq\varnothing$.
		\item For every $n<\omega$ and every arrow $f\in\K(y,u_n)$, with $y\in\K$, there exist $m\geq n$ and $g\in\K(u_m,y)$ such that $f\circ g=u^m_n$.
	\end{enumerate}
	Note that if $\K$ has the amalgamation property then $\bar{u}$ also satisfies
	\begin{enumerate}\setcounter{enumi}{2}
		\item For all arrows $f\in\K(a,b)$, $g\in\K(u_n,b)$, there exist $m\geq n$ and $h\in\K(u_m,a)$ with $g\circ u^m_n=f\circ h$.
	\end{enumerate}
\end{defn}

The following is a special case of Corollary 3.8 of \cite{K} and shows existence of \Fraisse sequences:
\begin{lemma} \label{lemma: fraisse sequence exists}
	Let $\K$ be a category with the amalgamation and joint embedding property with countably many objects and such that between any two objects there are finitely many arrows. Then $\K$ has a \Fraisse sequence.
\end{lemma}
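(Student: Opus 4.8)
The plan is to construct the sequence $\bar u=\langle u_n\mid n<\omega\rangle$ together with its bonding morphisms $u^{n}_{m}$ by a recursive bookkeeping argument, in the same spirit as the construction of the projective \Fraisse limit in Theorem \ref{thm: projective limit exists}. Since $\K$ has only countably many objects we fix an enumeration $\langle a_i\mid i<\omega\rangle$ of them, and since between any two objects there are only finitely many arrows the collection of all arrows of $\K$ is countable as well. We start with $u_0$ equal to any object; at a typical stage we will have built a finite initial segment $u_0\to\cdots\to u_l$, and we will extend it by one term so as to satisfy a single instance of one of the two defining conditions. A dovetailing schedule will guarantee that every instance is eventually attended to.

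Condition (1) is handled using directedness. To force some term of the sequence to map onto a prescribed object $a_i$, apply the joint projection property to the current last term $u_l$ and to $a_i$, obtaining an object $c$ together with arrows $c\to u_l$ and $r\colon c\to a_i$; then set $u_{l+1}=c$, let $u^{l+1}_l\colon u_{l+1}\to u_l$ be the first of these arrows, and record $r$ as a witness that $\K(u_{l+1},a_i)\neq\varnothing$. Condition (2) is handled using amalgamation. Suppose we wish to absorb an arrow $f\in\K(y,u_n)$ at a stage where the sequence has been built up to $u_l$ with $l\geq n$. Applying the amalgamation property to $f\colon y\to u_n$ and to the bonding map $u^{l}_{n}\colon u_l\to u_n$ yields an object $d$ and arrows $p\colon d\to y$, $q\colon d\to u_l$ with $f\circ p=u^{l}_{n}\circ q$. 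Setting $u_{l+1}=d$ and $u^{l+1}_l=q$ gives $u^{l+1}_n=u^{l}_{n}\circ q=f\circ p$, so that $g=p\in\K(u_{l+1},y)$ satisfies $f\circ g=u^{l+1}_n$ with $m=l+1\geq n$, exactly as required by condition (2).

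It remains to organise these extensions into a single $\omega$-sequence so that no required instance is missed, and this is the main obstacle. Condition (1) poses only countably many demands, one per object $a_i$, so it suffices to visit each $i$ at least once. Condition (2) is more delicate, because the arrows $f\colon y\to u_n$ that must be absorbed only come into existence once $u_n$ has been constructed, and new terms keep appearing as the construction proceeds. To cope with this, fix a surjection $\omega\to\omega\times\omega$ in which every pair occurs infinitely often; at the stage coded by $(n,k)$ we absorb, if it exists and has not yet been absorbed, the $k$-th arrow into $u_n$ in some fixed enumeration of the countably many arrows with codomain $u_n$ (countability here uses that there are countably many objects and finitely many arrows between any two), harmlessly skipping the stage if $u_n$ is not yet defined. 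Because each pair recurs infinitely often and each term $u_n$ is defined after finitely many stages, every arrow into every $u_n$ is eventually absorbed. Interleaving the condition-(1) steps with these condition-(2) steps produces a sequence $\bar u$ satisfying both requirements, hence a \Fraisse sequence for $\K$.

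One should double-check along the way that the amalgamation square above is set up in the correct direction (the two arrows fed into amalgamation share the codomain $u_n$, and the amalgam sits below both $y$ and $u_l$), and that composing the new bonding map with $u^l_n$ indeed reconstructs $f\circ p$; these are the routine verifications that make the two highlighted properties deliver conditions (1) and (2) verbatim.
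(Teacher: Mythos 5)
Your proof is correct. The paper does not actually prove this lemma itself---it is quoted as a special case of Corollary 3.8 of \cite{K}---but your argument is essentially the same dovetailing construction that the paper sketches in its proof of Theorem \ref{thm: projective limit exists}: joint projection steps to secure condition (1), amalgamation steps to secure condition (2) (with the square oriented exactly as in the paper's definition of the amalgamation property, both input arrows sharing the codomain $u_n$, so that setting $u^{l+1}_l=q$ gives $u^{l+1}_n=u^l_n\circ q=f\circ p$ as you verify), all organized by a bookkeeping schedule. The only difference is in the scheduling: the paper's Theorem \ref{thm: projective limit exists} enumerates arrow types in advance so that each recurs infinitely often and, at each stage, amalgamates all finitely many arrows from the current term onto the scheduled codomain at once, whereas you enumerate the arrows into each $u_n$ as that term is created and absorb them one at a time via a surjection onto $\omega\times\omega$ hitting every pair infinitely often; both schedules are sound, and your remark that the recurrence of pairs is genuinely needed (since a stage coded $(n,k)$ may occur before $u_n$ exists and must then be revisited) is exactly the right point to check.
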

The following is Theorem 3.15 of \cite{K} and shows the uniqueness (up to isomorphism) of \Fraisse sequences, hence also the uniqueness of \Fraisse limits when we embed into a category in which the \Fraisse sequence has a limit:
\begin{thm}\label{thm: fraisse limit is unique}
	Assume $\overline{u}$ and $\overline{v}$ are \Fraisse sequences in a given category $\K$. Assume further that $k,l<\omega$ and $f\in\K(u_k,v_l)$. Then there exists an isomorphism $F\colon\overline{u}\to\overline{v}$ in $\sigma\K$ such that the diagram 
	\begin{center}
		\begin{tikzcd}
	 	\overline{u} \arrow[d,"u_k"']\arrow[r, "F"] & \overline{v} \arrow[d,"v_l"'] \\
	 	u_k \arrow[r,"f"'] & v_l
		\end{tikzcd}
	\end{center}
	commutes. In particular $\overline{u}\approx\overline{v}$.
\end{thm}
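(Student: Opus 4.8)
The plan is to run a \emph{back-and-forth} (zigzag) argument, using only condition (2) in the definition of a \Fraisse sequence (the absorption property), interleaving the two sequences starting from the given arrow $f$. First I would build, by recursion, indices $k=p_0\leq p_1\leq\cdots$ and $l=q_0\leq q_1\leq\cdots$ together with arrows $\phi_i\in\K(u_{p_i},v_{q_i})$ and $\gamma_i\in\K(v_{q_{i+1}},u_{p_i})$, with $\phi_0=f$. Given $\phi_i$, I apply condition (2) for $\overline v$ to the arrow $\phi_i\in\K(u_{p_i},v_{q_i})$ to obtain $q_{i+1}\geq q_i$ and $\gamma_i\in\K(v_{q_{i+1}},u_{p_i})$ with $\phi_i\circ\gamma_i=v^{q_{i+1}}_{q_i}$; then I apply condition (2) for $\overline u$ to $\gamma_i\in\K(v_{q_{i+1}},u_{p_i})$ to obtain $p_{i+1}\geq p_i$ and $\phi_{i+1}\in\K(u_{p_{i+1}},v_{q_{i+1}})$ with $\gamma_i\circ\phi_{i+1}=u^{p_{i+1}}_{p_i}$. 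By enlarging indices whenever necessary (if $p'\geq p_{i+1}$ replace $\phi_{i+1}$ by $\phi_{i+1}\circ u^{p'}_{p_{i+1}}$, which preserves the triangle relation), I arrange that $p_i,q_i\to\infty$, so that both families are indexed by cofinal subsequences.

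Second, I would record the two triangle identities $\phi_i\circ\gamma_i=v^{q_{i+1}}_{q_i}$ and $\gamma_i\circ\phi_{i+1}=u^{p_{i+1}}_{p_i}$ and use them to check compatibility with the bonding maps. A direct computation gives $\phi_i\circ u^{p_{i+1}}_{p_i}=\phi_i\circ\gamma_i\circ\phi_{i+1}=v^{q_{i+1}}_{q_i}\circ\phi_{i+1}$, which is exactly the naturality square for $(\phi_i)$ along the cofinal subsequences; hence $(\phi_i)$ represents a morphism $F\in\sigma\K(\overline u,\overline v)$. The symmetric computation, using both identities, shows that $(\gamma_i)$ is natural and represents a morphism $G\in\sigma\K(\overline v,\overline u)$.

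Third, I would verify that $F$ and $G$ are mutually inverse. The composite $G\circ F$ is represented at index $p_i$ by $\gamma_{i-1}\circ\phi_i=u^{p_i}_{p_{i-1}}$, a bonding map of $\overline u$, and a natural transformation all of whose components are bonding maps is equivalent to $\mathrm{Id}_{\overline u}$ in $\sigma\K$; symmetrically $\phi_i\circ\gamma_i=v^{q_{i+1}}_{q_i}$ shows $F\circ G=\mathrm{Id}_{\overline v}$, so $F$ is an isomorphism in $\sigma\K$. Finally the commuting square holds by construction: the component of $F$ at $p_0=k$ is $\phi_0=f$ and $q_0=l$, so composing $F$ with the projection $\overline v\to v_l$ and comparing with $f$ composed with the projection $\overline u\to u_k$ both reduce, at index $k$, to $f$ itself.

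I expect the main obstacle to be bookkeeping rather than any genuine difficulty: matching the recursion to the direction conventions of the inverse system (bonding maps decrease the index), and, above all, verifying against the explicit equivalence relation defining arrows in $\sigma\K$ that a natural transformation with bonding-map components is equivalent to an identity and that the cofinal-subsequence descriptions of $F$ and $G$ are independent of the choices made. These are exactly the situations the two conditions in the definition of equivalence are designed to absorb, so each should reduce to producing, for a given $n$, a sufficiently large $m$ at which the required triangle commutes.
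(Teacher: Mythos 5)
Your proposal is correct: the paper does not prove this statement itself (it quotes it as Theorem 3.15 of \Kubis's paper \cite{K}), and the back-and-forth argument you give — alternately applying the absorption property of the two sequences to get the triangle identities $\phi_i\circ\gamma_i=v^{q_{i+1}}_{q_i}$ and $\gamma_i\circ\phi_{i+1}=u^{p_{i+1}}_{p_i}$, then reading off naturality, invertibility up to the equivalence relation of $\sigma\K$, and the commuting square from $\phi_0=f$ — is precisely the standard proof in that source. The points you flag as bookkeeping (extending the components from the cofinal subsequence $(p_i)$ to all indices, and checking that a transformation with bonding-map components is equivalent to the identity) are indeed routine verifications against the definition of arrows in $\sigma\K$ and work exactly as you expect.
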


\subsection{Generalized \Wazewski Dendrites from \Fraisse Categories}\hfill
We now have all the tools needed to talk about projection-embedding categories and construct the generalized \Wazewski dendrites not obtained in previous sections.
\begin{defn}\label{def: dF}
	Given $P\subseteq\{3,4,\ldots,\omega\}$ let $\L_P'$ denote the language with a binary relation $R$ and unary relations $U_p$ for every $p\in P$. Let $\mathcal L_R=\{R\}\subseteq\mathcal L_P'$ and let $\mathcal L_P=\{U_p\mid p\in P\}\subseteq\mathcal L_P'$. Let $\dF$ denote the following category:
	\begin{itemize}
		\item $A\in\dF$ iff $A$ is a finite tree (with $R$ as the edge relation), $A$ contains at least one ramification point and every vertex of $A$ is either an endpoint or a ramification point. Moreover for every ramification point $a\in A$ there is exactly one $p\in P$ such that $U_p(a)$ holds, and in that case $\ord(a)\leq p$. When $a$ is a ramification point we will refer to the unique $p\in P$ for which $U_p(a)$ holds as the \emph{label} of $a$. If $a$ instead is an endpoint of $A$, then $U_p(a)$ doesn't hold for any $p\in P$.
		\item Given $A,B\in\dF$ an arrow $f\in\dF(B,A)$ is a pair $(p(f),e(f))$ where $p(f)\colon B\to A$ is a weakly coherent epimorphism from $B$ to $A$ viewed as $\mathcal L_R$-structures with the discrete topology. Moreover if for $a\in A$, $U_p(a)$ holds and $b\in B$ witnesses the weak coherence of $p(f)$ at $a$, then $U_p(b)$ holds. In particular, we are not requiring that if $U_p(b)$ holds, then $U_p(p(f)(b))$ also holds. On the other hand $e(f)\colon \End(A)\to \End(B)$ is a map associating to every $a\in \End(A)$ a $b\in\End(B)$ with $p(f)\circ e(f)=\Id_{\End(A)}$, where $\End(X)$ is the set of endpoints of $X$. In other words $p(f)$ is a weakly coherent epimorphism between the $\mathcal L_R$-reducts, while $e(f)$ is a partial right inverse defined on the endpoints.
	\end{itemize} 
	Given two arrows $f\colon B\to A$ and $g\colon C\to B$ in $\dF$ we define their composition in the obvious way: $f\circ g$ is the pair $(p(f)\circ p(g),e(g)\circ e(f))$.
	It is clear that composition is associative and that the pair $(\mathrm{Id}_A,\mathrm{Id}_A)$ is the identity morphism for $A$ in $\dF$, so $\dF$ is a category.
\end{defn}

\begin{remark}
	The argument used to prove Lemma \ref{lemma: joint projection property} shows that $\dF$ is directed, so in order to check that it is a \Fraisse category we only need to check that it has the amalgamation property.
\end{remark}

\begin{lemma}\label{lemma: amalgamation for EP pairs}
	The category $\dF$ has amalgamation. In other words whenever we have arrows $f\in\dF(B,A)$, $g\in\dF(C,A)$, there exists $D\in\dF$ with arrows $h_1\in\dF(D,B)$, $h_2\in\dF(D,C)$ such that $f\circ h_1=g\circ h_2$.
\end{lemma}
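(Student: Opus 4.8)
The plan is to treat the two components $p(\cdot)$ and $e(\cdot)$ of a morphism of $\dF$ separately: first build the weakly coherent epimorphism underlying the amalgam, then graft onto it a choice of labels and of right inverses on the endpoints. So, forgetting labels and endpoint data, $p(f)\colon B\to A$ and $p(g)\colon C\to A$ are weakly coherent epimorphisms between finite trees, and the construction of Lemma \ref{lemma: amalgamation is satisfied} applies essentially verbatim (its induction never used the conditions relating weak coherence to $P$, only weak coherence of trees) to produce a finite tree $D$ with weakly coherent epimorphisms $p_1\colon D\to B$ and $p_2\colon D\to C$ satisfying $p(f)\circ p_1=p(g)\circ p_2$.

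The one place where that construction must be refined is the control of the orders of the witnesses, since in $\dF$ every ramification point of an object must have order bounded by its label. Consider the witness $d\in D$ lying over a ramification point $a\in A$ of label $p$: it must witness $p_1$ at the witness $b$ of $p(f)$ at $a$ and $p_2$ at the witness $c$ of $p(g)$ at $a$, and since $B,C\in\dF$ we have $\ord(b),\ord(c)\leq p$. The branches at $d$ split into $\ord(a)$ essential branches, one over each branch of $a$ and common to the $b$- and $c$-sides, together with the branches arising from the extra branches of $b$ and of $c$ that $f$, respectively $g$, collapse to $a$. The hard part will be to keep $\ord(d)\leq p$: I would route each extra branch of $c$ into the \emph{same} branch of $d$ as an extra branch of $b$ whenever one is available, which is legitimate because both collapse to $a$, so the two subtrees may be amalgamated over the one-point tree $\{a\}$ by the inductive hypothesis (or by the joint projection construction of Lemma \ref{lemma: joint projection property}). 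This overlapping yields $\ord(d)=\max(\ord(b),\ord(c))\leq p$, and the analogous bookkeeping over endpoints of $A$ and over internal non-witness branch points keeps every ramification point of $D$ of order at most $\max P$ (building the amalgam minimally). This order control, interacting with weak coherence, is the genuine technical obstacle; everything after it is grafting.

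Next I would assign the labels. A ramification point of $D$ that witnesses $p_1$ at a ramification point $b\in B$ (respectively $p_2$ at $c\in C$) is forced to carry the label of $b$ (respectively of $c$); these prescriptions are compatible, because by Remark \ref{remark: witnesses of coherence} a vertex witnessing \emph{both} sides lies over a ramification point $a\in A$, whence $b,c$ are the witnesses of $p(f),p(g)$ at $a$ and both carry the label of $a$. Every remaining ramification point of $D$ witnesses neither side and may be given any label in $P$ exceeding its order, which exists by the order bound just established. With these labels $D\in\dF$, and the two defining conditions on arrows hold: $p_1,p_2$ are weakly coherent, and a witness carries the same label as its image exactly by the above prescription (note that, as in the definition of $\dF$, no condition is imposed on non-witness vertices). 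Composition of weakly coherent maps is weakly coherent by Remark \ref{remark: composition of coherent is coherent}, so $p(f)\circ p_1=p(g)\circ p_2$ is a legitimate arrow of the $\mathcal{L}_R$-reducts.

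Finally I would produce the right inverses on endpoints. For each $\alpha\in\End(A)$ put $\beta=e(f)(\alpha)\in\End(B)$ and $\gamma=e(g)(\alpha)\in\End(C)$; since $p(f)(\beta)=\alpha=p(g)(\gamma)$, I would arrange the construction so that the part of $D$ lying over $\alpha$ contains a leaf $\delta_\alpha$ with $p_1(\delta_\alpha)=\beta$ and $p_2(\delta_\alpha)=\gamma$ (concretely, when amalgamating the fibres over the endpoint $\alpha$ one grafts on a single common leaf mapping to the two marked endpoints). Define $e(h_1)(\beta)=\delta_\alpha=e(h_2)(\gamma)$ for such marked $\beta,\gamma$, and extend $e(h_1)$ and $e(h_2)$ on the remaining endpoints of $B$ and $C$ by choosing, for each, any leaf of $D$ in the corresponding fibre (such a leaf exists since the relevant preimage is a subtree attached to the rest of $D$ at a single point). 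Then $p_1\circ e(h_1)=\Id_{\End(B)}$ and $p_2\circ e(h_2)=\Id_{\End(C)}$, so $h_1=(p_1,e(h_1))$ and $h_2=(p_2,e(h_2))$ are arrows of $\dF$, and $e(h_1)\circ e(f)=e(h_2)\circ e(g)$ holds on $\End(A)$ by the choice of the $\delta_\alpha$. Combining this with $p(f)\circ p_1=p(g)\circ p_2$ gives $f\circ h_1=g\circ h_2$, which is the required amalgam.
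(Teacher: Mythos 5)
Your overall architecture—rerun the construction of Lemma \ref{lemma: amalgamation is satisfied} on the $\mathcal L_R$-reducts, overlap the ``extra'' branches at witnesses to keep $\ord(d)=\max(\ord(b),\ord(c))\leq p$, then force labels on witnesses and choose endpoint sections—is essentially the paper's, and the order bound is exactly the right technical point. The genuine gap is your label-compatibility claim. Remark \ref{remark: witnesses of coherence} says: if a vertex witnesses the \emph{composite} at $a$, then it witnesses the second factor at the witness of the first factor at $a$. You invoke it in the converse direction: that a vertex of $D$ witnessing $p_1$ at some $b'\in B$ and $p_2$ at some $c'\in C$ forces $b'$ and $c'$ to be the witnesses of $p(f)$ and $p(g)$ at their common image. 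This is false. Concretely, take distinct $p,q\in P$; let $A$ be the $3$-star with centre $a$ (label $p$) and leaves $x,y,z$; let $B$ have two adjacent order-$3$ ramification points $b,b'$, where $b$ carries leaves over $x,y$ and is the witness of $p(f)$ at $a$ (so $b$ has label $p$), while $b'$ carries a leaf over $z$ and one further leaf collapsed to $a$, and give $b'$ label $p$; let $C$ be the same tree with vertices $c,c'$ and the same map, but give $c'$ label $q$. Then the seven-vertex tree $D$ with adjacent ramification points $d$ (carrying the leaves over $x,y$) and $\mu$ (carrying the leaf over $z$ plus two leaves, one mapped to the collapsed leaf of $B$ and one to that of $C$) admits weakly coherent $p_1,p_2$ with $p(f)\circ p_1=p(g)\circ p_2$ in which the single vertex $\mu$ witnesses $p_1$ at $b'$ \emph{and} $p_2$ at $c'$; it would be forced to carry both labels $p$ and $q$, so this amalgam of the reducts admits no labelling whatsoever. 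Note that it is the smaller of the two candidate amalgams, so your heuristic of ``building the amalgam minimally'' steers you precisely into these bad identifications rather than away from them.

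The upshot is that label compatibility is not a property of an arbitrary amalgam of the reducts that can be verified after the fact; it is a property of the particular construction, and must be carried through the induction. That is exactly what the paper does: it strengthens the inductive hypothesis of Lemma \ref{lemma: amalgamation is satisfied} to say that the amalgam can be chosen so as to preserve labels (and to be compatible with the embedding parts), and its construction secures this by keeping the witnesses of the two sides apart except over common witnesses of $A$—the overlapped extra branches are joined to brand-new padding vertices (the $z_i$ in Case 1b, which witness nothing on either side and so have free labels), and $B_2$ is re-inserted as a disjoint copy of itself in Case 1a. Your proof needs this verification, or an equivalently strengthened induction statement. A secondary point: the overlap cannot literally be performed ``by the inductive hypothesis over the one-point tree $\{a\}$'', since the one-point tree is not an object of $\dF$ (objects must contain a ramification point); and a naive wedge of the two extra branches at $d$ produces two branches of $d$, not one, so the merging into a single branch has to be done by an explicit gluing through a new junction vertex as above. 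Your treatment of the embedding parts (a common leaf $\delta_\alpha$ over each endpoint $\alpha$ of $A$, arbitrary sections elsewhere) is correct and matches the paper.
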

\begin{proof}
The proof is very similar to the proof of amalgamation given in the previous section for $\mathcal F_P$, so we don't write all the details. Instead we point out the extra steps that need to be taken to adapt the proof of \ref{lemma: amalgamation is satisfied} to the current setting. Let $A,B,C,f,g$ be as in the statement of the lemma. We proceed by induction on $|E(B)|+|E(C)|$. The smallest structures in $\dF$ are trees with a single ramification point labelled with any $p\in P$ that has $3$ neighbours. Clearly if $B$ and $C$ are of that form then $A$ must be the same tree on $4$ vertices as well, and all the ramification points in $A,B,C$ must have the same label, so we can take $D=A=B=C$ with identity maps to be the amalgam.
Our induction hypothesis now becomes that not only diagrams of the form $B\to A\leftarrow C$ with smaller $|E(B)|+|E(C)|$ can be amalgamated in $\dF$, but that the amalgamation can be carried on in a way that preserves labels and is compatible with the embedding part of the arrows. With this assumption we can use the same construction as in the proof of Lemma \ref{lemma: amalgamation is satisfied}. Indeed as in that lemma suppose without loss of generality that $p(f)^{-1}(a)$ is nontrivial for some $a\in A$. Suppose first that $a$ is a ramification point and we are in the $B_2\neq B_1$ case. We can produce $B'$ with $\pi\in\dF(B,B')$, $f'\in\dF(B',A)$ and $D'$ with $h_1'\in\dF(D',B')$ and $h_2'\in\dF(D',C)$ as in the proof of Lemma \ref{lemma: amalgamation is satisfied}. Note that in this case $p(\pi)$ is the identity map between $\End(B)$ and $\End(B')$, so that $e(f')$ can be defined to agree with $e(f)$. We then build $D$ from $D'$ by glueing back $B_2$ along an edge. Points in $D$ are already labeled correctly, with labels coming from either $B_2$ or $D'$, and since glueing $B_2$ into $D'$ doesn't collapse any endpoints in $D'$ we can take $e(h_2)=e(h_2')$, while $e(h_1)=e(h_1')$ on $\End(B')$ and is the identity on $\End(B_2)\setminus\{b_1,b_2\}$. 

The $B_2=B_1$ is easier to deal with: we follow the same construction as in the proof of Lemma \ref{lemma: amalgamation is satisfied} and we never run into issues with the embedding part of the morphisms, since endpoints are never collapsed during the construction.

If instead $a$ is an endpoint there are still no issues mimicking the construction from Lemma \ref{lemma: amalgamation is satisfied} as far as labels are concerned, but a little more care is needed for the embedding part of the arrows, so we will write out some of the details. Let $b\in p(f)^{-1}(a)$ be the unique vertex for which there is a $b'\in B\setminus p(f)^{-1}(a)$ with $\langle b',b\rangle\in E(B)$ and let $B_2$ be $p(f)^{-1}(a)$. As in the previous case we construct $B'$ from $B$ by collapsing $B_2$ to $b$ through the map $p(\pi)\colon B\to B'$ but now we also need to define the embedding part $e(\pi)$ in order to get a morphism in $\dF$. Let $e(\pi)$ be the identity on $\End(B')\setminus\{b\}$ and $e(\pi)(p(\pi)(b))=e(f)(a)$. Similarly we obtain a coherent epimorphism $p(f')\colon B'\to A$ as the only such morphism satisfying $p(f')\circ p(\pi)=p(f)$. There is only one choice for $e(f')$, namely
$$e(f')(x)=\begin{cases}e(f)(x) & \text{ if $x\in\End(A)\setminus\{a\}$} \\ p(\pi)(b) & \text{ if $x=a$}\end{cases}.$$
Once again we obtain, by inductive hypothesis, an amalgam $D'$ with $h_1'\in\dF(D',B')$ and $h_2'\in\dF(D',C)$ such that $f'\circ h_1'=g\circ h_2'$. We now have to glue back $B_2$ to $D'$ in order to build an amalgam $D$ with maps $h_1\in\dF(D,B)$ and $h_2\in\dF(D,C)$. We already know how to build $D$ as a graph and how to define $p(h_1),p(h_2)$, since the construction is  the same as the one in the proof of Lemma \ref{lemma: amalgamation is satisfied}. As in the previous section we obtain $D$ from $D'$ and $B_2$ by identifying $b\in B_2$ with an endpoint of $D'$. Here we take $\alpha\in\End(D')$ to be $e(h_1')(b)$ (which is also equal to $e(h_2')(e(g)(a))$). Points of $D$ are already correctly labeled, with labels coming from either $B_2$ or $D$, and we define $p(h_1),p(h_2)$ exactly as above. We only need to define $e(h_1),e(h_2)$. There is nothing to be done for $e(h_2)$ which we simply define to be equal to $e(h_2')$ on all endpoints except $e(g)(a)\in B_2$, where it now takes value $e(f)(a)$. To define $e(h_1)$ there are a few cases to consider, as follows: 
$$e(h_1)(x)=\begin{cases}
	e(h_1')(x) & \text{if } x\in\End(B)\setminus\End(B_2) \\
	x & \text{if } x\in\End(B_2)
\end{cases}.$$ 

It is now easy to check that the resulting square diagram is commutative, as desired.
\end{proof}

Since $\dF$ has countably many objects and between any two objects of $\dF$ there are finitely many arrows, Lemma \ref{lemma: fraisse sequence exists} and Theorem \ref{thm: fraisse limit is unique} immediately imply the following:
\begin{thm}
	The category $\dF$ has a \Fraisse sequence $\overline{u}$ and any two \Fraisse sequences are isomorphic as elements of $\sdF$. In particular whenever $\dF$ is embedded in a category in which \Fraisse sequences have colimits, the colimits arising from two \Fraisse sequences are isomorphic.
\end{thm}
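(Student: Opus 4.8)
The plan is to verify that $\dF$ meets the hypotheses of Lemma~\ref{lemma: fraisse sequence exists} and then to extract the uniqueness statement from Theorem~\ref{thm: fraisse limit is unique}; there is no genuinely hard step, so the work consists entirely in assembling already-established pieces. Four conditions must be checked. The objects of $\dF$ are finite labelled trees, of which there are only countably many up to isomorphism, so $\dF$ has countably many objects. Between any two objects $A,B$ there are finitely many arrows, since an arrow $f\in\dF(B,A)$ is a pair $(p(f),e(f))$ consisting of a map $p(f)\colon V(B)\to V(A)$ and a map $e(f)\colon\End(A)\to\End(B)$ between finite sets, and there are only finitely many such pairs. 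Amalgamation is precisely Lemma~\ref{lemma: amalgamation for EP pairs}, and directedness (the joint projection property) was observed in the remark preceding that lemma to follow from the construction in Lemma~\ref{lemma: joint projection property}. With all the hypotheses in place, Lemma~\ref{lemma: fraisse sequence exists} yields a \Fraisse sequence $\overline{u}$ in $\dF$.

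For the uniqueness clause I would invoke Theorem~\ref{thm: fraisse limit is unique}, which requires an initial arrow between the two sequences. Given \Fraisse sequences $\overline{u}$ and $\overline{v}$, condition (1) in the definition of a \Fraisse sequence, applied to $\overline{u}$ with $x=v_0$, produces some $n<\omega$ together with an arrow $f\in\dF(u_n,v_0)$. Feeding $k=n$, $l=0$ and this $f$ into Theorem~\ref{thm: fraisse limit is unique} gives an isomorphism $F\colon\overline{u}\to\overline{v}$ in $\sdF$, which is exactly the assertion that any two \Fraisse sequences are isomorphic as objects of $\sdF$.

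The final \enquote{in particular} clause is then purely formal: whenever $\dF$ is embedded in a category in which the two \Fraisse sequences admit colimits, the isomorphism $F\colon\overline{u}\to\overline{v}$ in $\sdF$ induces, by the universal property of colimits, an isomorphism of the corresponding colimits. Since all the real content has already been established—existence of the \Fraisse sequence through amalgamation and directedness, and uniqueness through Theorem~\ref{thm: fraisse limit is unique}—I do not expect any substantive obstacle. The only points that demand any attention are the bookkeeping verifications that $\dF$ has countably many objects and finitely many arrows between any two of them, and the selection of the initial arrow $f\in\dF(u_n,v_0)$ needed to apply the uniqueness theorem.
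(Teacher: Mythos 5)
Your proposal is correct and follows essentially the same route as the paper, whose entire proof is the one-sentence observation that $\dF$ has countably many objects and finitely many arrows between any two of them, so that Lemma~\ref{lemma: fraisse sequence exists} and Theorem~\ref{thm: fraisse limit is unique} apply (amalgamation and directedness having been established in Lemma~\ref{lemma: amalgamation for EP pairs} and the remark preceding it). Your extra care in producing the initial arrow $f\in\dF(u_n,v_0)$ via condition (1) of the definition of a \Fraisse sequence is a detail the paper leaves implicit, but it is exactly the right way to trigger the uniqueness theorem.
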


As mentioned in the introduction to this section we want to embed $\sdF$ in a category in which those sequences actually have a limit. We do so by considering $\sdF$ as a subcategory of a category $\mathcal G$ of \textit{labeled topological graphs} defined as follows: The objects of $\mathcal G$ are pairs $G=(B,E)$ where $B$ is a graph-dendrite, equipped with unary predicates $U_n$, for $n\in P$, such that if $a$ is a ramification point of $G$, there is a unique $n$ for which $U_n(a)$ holds and $n\leq\ord(a)$, and $E$ is a countable subset of $\Bbb U$. The morphisms of $\mathcal G$ from $(B,E)$ to $(A,F)$ are pairs $f=(p(f),e(f))$ such that $e(f)\colon F\to E$ is an injection, while $p(f)\colon B\to A$ is an epimorphism of topological graphs such that $p(f)\circ e(f)=\mathrm{Id}_F$. 

To a sequence $\overline{u}\in\sdF$ we associate the pair $(\Bbb U,E)$, where $\Bbb U$ is the topological graph obtained as the projective limit of the projection part of $\overline{u}$, while $E$ is the direct limit of the embedding part of $\overline{u}$. Moreover every ramification point $x$ of $\Bbb U$ is labeled with its order as label, where the order of $x$ is the (potentially infinite) number of components of $\Bbb U\setminus\{x\}$. Given another $\overline{v}\in\sdF$, a morphism $\psi\colon \overline{u}\to\overline{v}$ is by definition a natural transformation $\psi$ from $\overline{u}$ to $\overline{v}\circ\varphi$, where $\varphi\colon\omega\to\omega$ is order-preserving. It is easy to check that if $(\Bbb U,E)$ and $(\Bbb V,F)$ are the objects associated to $\overline{u}$ and $\overline{v}$ as described above, then the projection part of $\psi$ converges to a continuous surjection $p(\psi)\colon\Bbb U\to\Bbb V$, while the embedding part converges to an injection $e(\psi)\colon F\to E$ such that $p(\psi)\circ e(\psi)=\mathrm{Id}_F$. Note that, since in Definition \ref{def: dF} we required that the witness of weak coherence at a point $a$ has the same label as $a$, $p(\psi)$ has the property that if $v\in\Bbb V$ is a ramification point with label $l$, then there is a ramification point $u\in p(\psi)^{-1}(v)$ with the same label.
This describes the desired functor $\sdF\to\mathcal G$. 

To a sequence $\overline{u}\in\sdF$ we associated a pair $(\Bbb U,E)$, where $\Bbb U$ is a topological graph. We will show that whenever $\overline{u}$ is a \Fraisse sequence for $\dF$, then $|\Bbb U|$ is the generalized \Wazewski dendrite $W_P$, where $\pi\colon\Bbb U\to|\Bbb U|$ is the topological realization.

Note that the embedding part of $\overline{u}$ plays no role in the following theorem. This observation will be made precise and become relevant later.

\begin{thm} \label{thm: limit of dF is W_P}
	Let $\overline{u}=\langle F_i\mid i<\omega\rangle$ with maps $f^{i+1}_i\in\dF(F_{i+1},F_i)$ be a \Fraisse sequence for $\dF$ and let $(\mathbb F_P,E)$ be its limit. Then $|\Bbb F_P|=W_P$.
\end{thm}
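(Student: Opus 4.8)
The plan is to verify that $|\mathbb{F}_P|$ is a dendrite satisfying the two characterizing properties of $W_P$, following the same strategy used in the proofs of Theorem \ref{thm: limit of G_P} and Theorem \ref{thm: limit of F_P}, but now exploiting the labels to control the orders of ramification points more precisely. First I would observe that $\mathbb{F}_P$ is the projective \Fraisse limit (in the sense of the projection parts) of the underlying trees with weakly coherent epimorphisms, so that by Lemma \ref{lemma: splitting implies prespace} and Theorem \ref{thm: limit is dendrite} the realization $|\mathbb{F}_P|$ is a dendrite. The key point is that Theorems \ref{thmw: coherent implies splitting} and \ref{thmw: splitting implies coherent} apply to identify ramification points of $|\mathbb{F}_P|$ with weakly coherent points $x=(x_i)$ of $\mathbb{F}_P\subseteq\prod F_i$.

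\textbf{Ramification points have order in $P$.} Let $x=(x_i)$ be a weakly coherent point, with witnesses $x_{i+1}$ for large $i$. The labels are the crucial new ingredient: by the definition of arrows in $\dF$, if $x_i$ is a ramification point carrying label $p\in P$, then its witness $x_{i+1}$ also carries label $p$, so the label stabilizes to some fixed $p\in P$ along the sequence. Since every ramification point of a tree in $\dF$ satisfies $\ord(x_i)\leq p$ (its label), the orders $\ord(x_i)$ are bounded by $p$ whenever $p\neq\infty$, hence stabilize to some finite value $n\leq p$; by Remark \ref{rmk: order of weakly coherent} we get $\ord(\pi(x))=n$. When $p=\infty$ either the orders stabilize to a finite value or grow unboundedly, and in the latter case $\pi(x)$ has infinite order by Remark \ref{rmk: order of weakly coherent}. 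I still need to rule out that a point with stabilized label $p$ realizes an order $n<p$ strictly smaller than $p$ as a ramification point of $W_P$; the arcwise density argument below, together with the freedom in the \Fraisse sequence to split and raise orders up to the label, shows that the realized orders are exactly the elements of $P$.

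\textbf{Arcwise density of ramification points of each order $p\in P$.} Fix $p\in P$, an arc $X\subseteq|\mathbb{F}_P|$, distinct $a',b'\in X$, and lift them to $a,b\in\mathbb{F}_P$. Choose $i$ with $f_i(a)\neq f_i(b)$ and pick an edge $\langle w,y\rangle$ on the path joining $f_i(a)$ and $f_i(b)$ in $F_i$. I would split this edge and insert a new ramification vertex labeled $p$ of order exactly $p$ (if $p\neq\infty$) or of some large finite order with label $\infty$ (if $p=\infty$), obtaining $H\in\dF$ with an arrow $H\to F_i$; the labeling condition $\ord\le p$ guarantees this is a legal object of $\dF$. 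Using the \Fraisse property of $\overline{u}$, find $F_k$ with an arrow factoring through $H$, producing a point of the correct label on the path between $f_k(a)$ and $f_k(b)$; iterating (raising the order toward $p$ when $p=\infty$) yields a weakly coherent point $x=(x_i)$ of stabilized label $p$ lying on every path between $f_i(a)$ and $f_i(b)$. As in the proofs of Theorems \ref{thm: limit of G_P} and \ref{thm: limit of F_P}, hereditary unicoherence forces $x$ into any connected set containing $a$ and $b$, so $\pi(x)\in X$ is a ramification point of order $p$.

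The main obstacle I anticipate is ensuring that the \emph{realized} order equals $p$ rather than merely being bounded by $p$: the definition of $\dF$ only imposes $\ord\leq p$ for a vertex labeled $p$, so a priori a witness chain with stabilized label $p$ could settle at a smaller order. The density construction resolves this by explicitly manufacturing, for each $p\in P$, ramification points whose order is exactly $p$ (or tends to $\infty$ when $p=\infty$), and the \Fraisse absorption property ensures such configurations persist to the limit. Since the embedding part $e(f)$ of the arrows never enters the realization $|\mathbb{F}_P|$, it plays no role here, which is why the theorem depends only on the projection parts of $\overline{u}$; I would remark on this explicitly, as it is what makes the identification with $W_P$ insensitive to the choice of dense endpoint set and sets up the homogeneity application.
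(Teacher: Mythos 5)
Your overall framework matches the paper's: $|\mathbb F_P|$ is a dendrite by Theorem \ref{thm: limit is dendrite}, ramification points of $|\mathbb F_P|$ correspond to weakly coherent points of $\mathbb F_P$ by Theorems \ref{thmw: coherent implies splitting} and \ref{thmw: splitting implies coherent}, and your arcwise density construction is essentially the one the paper imports from Theorem \ref{thm: limit of F_P}. However, there is a genuine gap in your treatment of the \emph{first} defining property of $W_P$, namely that \emph{every} ramification point has order in $P$. You correctly reduce to a weakly coherent point $x=(x_i)$ whose label stabilizes at some $p\in P$ and whose orders (being nondecreasing and bounded by $p$ when $p<\infty$) stabilize at some $n\le p$, and you correctly flag the obstacle that possibly $n<p$, in which case $n$ need not lie in $P$ at all. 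But your proposed resolution --- that \enquote{the density construction resolves this by explicitly manufacturing ramification points whose order is exactly $p$} --- is a non sequitur: arcwise density of order-$p$ points for each $p\in P$ is the \emph{second} defining property of $W_P$ and says nothing about the orders of the remaining ramification points; a dendrite can have arcwise dense points of order $3$ and still contain a single point of order $5$. The same gap occurs for $p=\infty$: you allow the orders under label $U_\infty$ to stabilize at a finite $n$, and nothing you say excludes $n\notin P$.

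What is needed (and what the paper invokes when it asserts that $\ord(x_n)$ must equal the label $p$ from some index on, \enquote{by an argument similar to the one in the proof of Theorem \ref{thm: limit of F_P}}) is an absorption argument applied to the given point $x$ itself, not to newly manufactured points. If the orders stabilized at $n<p$, take $k$ large, and let $H\in\dF$ be obtained from $F_k$ by attaching $p-n$ new leaves to $x_k$, keeping its label $p$; this is a legal object and the collapsing map gives a legal arrow $H\to F_k$ in $\dF$ (the new order is $p\le p$, the witness of weak coherence at $x_k$ carries the same label, and the embedding part can be taken to be the inclusion of $\End(F_k)$). The \Fraisse property yields $m\ge k$ and an arrow $F_m\to H$ whose composition with $H\to F_k$ equals $f^m_k$. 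Since $x_m$ witnesses the weak coherence of $f^m_k$ at $x_k$, Remark \ref{remark: witnesses of coherence} shows that $x_m$ witnesses the weak coherence of $F_m\to H$ at the modified vertex of order $p$, forcing $\ord(x_m)\ge p$, contradicting stabilization at $n<p$. The analogous argument under label $U_\infty$ (where arbitrarily many leaves may be attached) forces $\ord(x_i)\to\infty$, hence $\ord(\pi(x))=\infty$ by Remark \ref{rmk: order of weakly coherent}. With this step inserted, the rest of your proof, including the closing observation that the embedding parts of the arrows play no role (which the paper also makes), is correct and follows the paper's route.
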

\begin{proof}
	Even though $\dF$ doesn't allow splitting edges, we can proceed as in Remark \ref{remark: splitting edges} to use all the results that required splitting edges. Because of this we know that $|\Bbb F_P|$ is a dendrite thanks to Theorem \ref{thm: limit is dendrite} since $p(f^{i+1}_i)$ is monotone and (weakly) coherent, moreover we still have the correspondence between (weakly) coherent points of $\Bbb F_P$ and ramification points of $|\Bbb F_P|$ established in Theorem \ref{thmw: coherent implies splitting} and Theorem \ref{thmw: splitting implies coherent}. The fact that each ramification point has order in $P$ follows from an argument similar to the one done in the proof of Theorem \ref{thm: limit of F_P}: if $x=(x_n)\in\Bbb F_P$ is labelled with $U_p$, then $x_n$ must be labelled with $U_p$ from some index on. If $p<\omega$, then $\ord(x_n)$ must also be $p$ from some (possibly bigger) index on, which implies that $\ord(x)=p$, and so $\ord(\pi(x))=p$. If $x=(x_n)\in\Bbb F_P$ is labelled with $U_\omega$ instead, then $\ord(x_n)\to\infty$ as $n\to\infty$ as in the proof of Theorem \ref{thm: limit of F_P}, hence $\ord(\pi(x))=\omega$. The fact that the ramification points of each order in $P$ are arcwise dense in $|\Bbb F_p|$ follows from exactly the same argument as the one at the end of the proof of Theorem \ref{thm: limit of F_P}.
\end{proof}

\subsection{\texorpdfstring{A Countable Dense Homogeneity Result for $\mathbf{End(W_P)}$}{A countable Dense Homogeneity Result for WP}}\hfill\\
We now want to prove the following homogeneity result for the endpoints of $W_P$, first stated in \cite{CD} for $W_3$:

\begin{thm} \label{thm: CDH}
	Let $P\subseteq\{3,4,\ldots,\omega\}$ and let $Q_1,Q_2$ be countable dense subsets of $\End(W_P)$. Then there is a homeomorphism $h\colon W_P\to W_P$ with $h(Q_1)=Q_2$.
\end{thm}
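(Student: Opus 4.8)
The plan is to realize each $Q_i$ as the distinguished dense set of endpoints carried by a \Fraisse sequence of $\dF$, and then to invoke the uniqueness of \Fraisse sequences, Theorem \ref{thm: fraisse limit is unique}. Concretely, I would first isolate a \emph{realization lemma}: for every countable dense $Q\subseteq\End(W_P)$ there is a \Fraisse sequence $\overline{u}$ for $\dF$ whose associated limit $(\Bbb F_P,E)$ satisfies $\pi(E)=Q$, where $\pi\colon\Bbb F_P\to|\Bbb F_P|=W_P$ is the topological realization (identifying $|\Bbb F_P|$ with $W_P$ via Theorem \ref{thm: limit of dF is W_P}). Granting this, I apply the lemma to $Q_1$ and to $Q_2$ to obtain \Fraisse sequences $\overline{u},\overline{v}$ with limits $(\Bbb F_P,E_1),(\Bbb F_P,E_2)$ and $\pi(E_1)=Q_1$, $\pi(E_2)=Q_2$, where the points of each $E_i$ are endpoints of the graph $\Bbb F_P$ and $\pi$ is injective on $E_i$ (distinct threads converge to distinct endpoints of $W_P$).

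The homogeneity step is then immediate from the categorical machinery. Since $\overline{u}$ is a \Fraisse sequence and $v_0\in\dF$, the first defining condition of a \Fraisse sequence yields some $k$ and an arrow $f\in\dF(u_k,v_0)$, so Theorem \ref{thm: fraisse limit is unique} produces an isomorphism $F\colon\overline{u}\to\overline{v}$ in $\sdF$. Applying the functor $\sdF\to\mathcal{G}$ constructed above to $F$ gives an isomorphism $\psi=(p(\psi),e(\psi))$ in $\mathcal{G}$ between $(\Bbb F_P,E_1)$ and $(\Bbb F_P,E_2)$; by definition of the morphisms of $\mathcal{G}$, the map $p(\psi)\colon\Bbb F_P\to\Bbb F_P$ is an isomorphism of labeled topological graphs and $p(\psi)\circ e(\psi)=\Id$ witnesses that $p(\psi)$ carries $E_1$ bijectively onto $E_2$. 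As an isomorphism of prespaces, $p(\psi)$ preserves the edge-equivalence relation and hence descends to a homeomorphism $h=|p(\psi)|\colon W_P\to W_P$ with $\pi\circ p(\psi)=h\circ\pi$. Consequently $h(Q_1)=h(\pi(E_1))=\pi(p(\psi)(E_1))=\pi(E_2)=Q_2$, which is the desired conclusion.

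It remains to prove the realization lemma, and this is where the real work lies. I would build $\overline{u}$ recursively, fixing in advance the standard projection-only \Fraisse sequence $\langle F_k,f^{k+1}_k\rangle$ with $\Bbb F_P=\varprojlim F_k$ (so the $\mathcal{L}_R$-reduct is already generic) and choosing only the labels and the embedding parts $e(f^{k+1}_k)\colon\End(F_k)\to\End(F_{k+1})$. Writing $f_k\colon\Bbb F_P\to F_k$ for the projections and $Q=\{q_n\}$, each $q_n$ has a preimage endpoint $\tilde q_n\in\Bbb F_P$, whose coordinates $f_k(\tilde q_n)$ are eventually distinct endpoints of $F_k$ because $\mesh\to 0$ along the sequence (property (3) of Theorem \ref{thm: projective limit exists}). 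Along the recursion I interleave three families of tasks: \emph{(i)} the amalgamation steps of Lemma \ref{lemma: fraisse sequence exists} that make $\overline{u}$ an honest \Fraisse sequence for $\dF$; \emph{(ii)} \emph{capturing} each $q_n$, i.e. eventually defining $e$ so that the embedding thread through $f_k(\tilde q_n)$ tracks $\tilde q_n$; and \emph{(iii)} \emph{steering} every endpoint of every $F_k$ whose thread is not yet committed toward a point of $Q$. Since $E=\varinjlim\End(F_k)$ is precisely the set of threads, tasks (ii)--(iii) together force $\pi(E)=Q$.

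The main obstacle is task (iii): because one thread is created for every endpoint that ever appears in the sequence, I must guarantee that no thread escapes $Q$, i.e. that each thread converges to a point of $\{\tilde q_n\}$ rather than to some endpoint of $W_P$ outside $Q$. This is exactly where density of $Q$ enters. For an endpoint $a\in\End(F_k)$, the fibre $f_k^{-1}(a)$ projects to a nondegenerate subdendrite of $W_P$, which never collapses to a stray endpoint at a finite stage since $\End(W_P)$ has no isolated points; as $\End(W_P)$ is dense in $W_P$ and $Q$ is dense in $\End(W_P)$, the set $Q\cap\pi(f_k^{-1}(a))$ is nonempty, so one can choose the embedding preimages of $a$ to shrink toward such a point and, iterating, drive the thread into $Q$. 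The technical heart is to verify that this steering can be performed simultaneously with the amalgamation steps (i) without conflict; this holds because the projection part of the sequence is fixed beforehand while the embedding part of a $\dF$-arrow is merely an unconstrained right inverse on endpoints, so committing embeddings never obstructs the genericity of the projection part.
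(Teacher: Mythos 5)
Your global strategy coincides with the paper's: prove a realization lemma (every countable dense $Q\subseteq\End(W_P)$ is $\pi(E)$ for the limit $(\Bbb U,E)$ of some \Fraisse sequence in $\dF$) and then conclude by uniqueness of \Fraisse sequences, Theorem \ref{thm: fraisse limit is unique}; your derivation of the homeomorphism $h$ from the isomorphism in $\sdF$ is correct and is exactly what the paper leaves implicit. The gap is in your proof of the realization lemma, and it is fatal to the route you chose: you cannot fix the projection part of the sequence in advance and realize an arbitrary $Q$ by choosing only labels and embedding parts. Every embedding thread consists of endpoints of the finite trees, so for any choice of the $e$-parts the set $\pi(E)$ is contained in $S=\bigcup_{N}\pi\bigl(\bigcap_{k\geq N}f_k^{-1}(\End(F_k))\bigr)$, the set of points of $W_P$ whose coordinates are \emph{eventually endpoints} of the $F_k$; this set is $\sigma$-compact and is determined once and for all by the fixed projection sequence. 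But $\End(W_P)$ is not $\sigma$-compact: it is a $G_\delta$ (standard for dendrites) which is dense in $W_P$ (the paper's Corollary after Lemma \ref{lemma: endpoints of the limit}), hence comeager, while any compact subset of $\End(W_P)$ is closed with empty interior in $W_P$ (ramification points are dense), so any $\sigma$-compact subset of $\End(W_P)$ is meager. In particular $S$ is meager, $\End(W_P)\setminus S$ is comeager and therefore dense, and one can pick a countable dense $Q\subseteq\End(W_P)$ with $Q\cap S=\varnothing$. For such a $Q$ your construction cannot give $\pi(E)=Q$; in fact $\pi(E)\cap Q=\varnothing$ no matter how the embeddings are steered. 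The same phenomenon refutes the specific step where you claim the coordinates $f_k(\tilde q_n)$ "are eventually distinct endpoints of $F_k$ because $\mesh\to 0$": the mesh condition gives eventual \emph{distinctness} of coordinates of distinct points, but nothing makes those coordinates endpoints, and for the $Q$ above they are ramification points infinitely often, so both your capturing task \emph{(ii)} and your steering task \emph{(iii)} are impossible.

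The repair is to let the combinatorics depend on $Q$, which is what the paper does: the trees are built \emph{inside} $W_P$, taking $F_n$ to be the tree whose vertices are the center closure $A_n$ of $\{q_1,\ldots,q_{k_n}\}$ (Definition \ref{def: center closed stuff}), with edges given by adjacency in $A_n$ and labels read off from orders in $W_P$. Then the leaves of $F_n$ are literally points of $Q$, the embedding parts are the tautological inclusions $A_n\subseteq A_{n+1}$ restricted to endpoints, and the projection parts are any monotone maps compatible with these immersions via Remark \ref{remark: from immersion to coherence} (arranged so that fibres have at least two points, which makes the limit a prespace). Genericity is then not obtained by interleaving amalgamation tasks as you propose; it comes for free from Theorem \ref{thm: sequence is Fraisse if limit is WP-prespace}, whose entire purpose is to certify that a hand-built sequence whose limit is visibly a $W_P$-prespace is automatically a \Fraisse sequence. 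This inversion --- constructing the projection sequence from the topology of $Q$ rather than steering a pre-given projection sequence toward $Q$ --- is the idea your proposal is missing.
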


We begin with a more precise description of the endpoints of a projective \Fraisse limit of trees, in particular we want to obtain that if $(\Bbb U,E)$ is the projective \Fraisse limit of $\dF$, then $\pi(E)$ is a dense set of endpoints in $|\Bbb U|$. We first show that endpoints are edge related only to themselves.

\begin{prop}\label{prop: endpoints are alone}
	Let $\mathcal F$ be a projective \Fraisse family of trees with (weakly) coherent epimorphisms that allows splitting edges and let $\Bbb F$ be its projective \Fraisse limit. If $e\in\End(\Bbb F)$, then there is no $e'\in\Bbb F\setminus\{e\}$ with $\langle e,e'\rangle\in E(\Bbb F)$.
\end{prop}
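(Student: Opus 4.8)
The plan is to argue by contradiction using the edge-splitting closure property, which is available (in the modified form described in the remark) for all the families under consideration. Suppose $e\in\End(\Bbb F)$ and there exists $f\in\Bbb F\setminus\{e\}$ with $\langle e,f\rangle\in E(\Bbb F)$. By Theorem~\ref{thm: transitive edges} (applicable since splitting edges implies the hypotheses, by the argument of Lemma~\ref{lemma: splitting implies prespace}), the equivalence class of $e$ has exactly two elements $\{e,f\}$, and $\Bbb F$ is a prespace. Write $\Bbb F=\varprojlim F_i$ with projections $f_i\colon\Bbb F\to F_i$, and choose $i$ large enough that $f_i(e)\neq f_i(f)$; then $\langle f_i(e),f_i(f)\rangle\in E(F_i)$ is a nontrivial edge.

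First I would establish that $f_i(e)$ is an endpoint of $F_i$ for all sufficiently large $i$. The natural route is to note that $\pi(e)$ is an endpoint of the dendrite $|\Bbb F|$: indeed, if $\pi(e)$ were a regular or ramification point, then $\pi^{-1}(\pi(e))$ would be a point of (weak) coherence by Theorem~\ref{thmw: splitting implies coherent}, but a weakly coherent point has order at least $3$ in the limit, contradicting that $e$ is an endpoint in the graph-theoretic sense of the excerpt's definition. So $\pi(e)$ is an endpoint of $|\Bbb F|$, and since the $f_i$ are monotone epimorphisms (Lemma~\ref{lemma: maps from the limit are monotone}), Theorem~\ref{thm: monotone maps preserve endpoints} forces $f_i(e)$ to be an endpoint of $F_i$ whenever $f_i$ separates $e$ from the rest of its component, i.e.\ for large $i$.

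Now split the edge $\langle f_i(e),f_i(f)\rangle$ by inserting a new vertex $\ast$ (together with enough extra neighbours to reach an admissible order, if the family requires it), yielding $H$ with the epimorphism $f^H_i\colon H\to F_i$ collapsing $\ast$ to $f_i(e)$; both $H$ and $f^H_i$ lie in $\mathcal F$. By the \Fraisse sequence property there is an epimorphism $g\colon\Bbb F\to H$ with $f^H_i\circ g=f_i$. Since $g$ maps edges to edges and $\langle e,f\rangle$ is an edge, $\langle g(e),g(f)\rangle\in E(H)$; combined with $f^H_i(g(e))=f_i(e)$ and $f^H_i(g(f))=f_i(f)$, the only possibility compatible with the inserted path $f_i(e)\!-\!\ast\!-\!f_i(f)$ in $H$ is $g(e)=f_i(e)$ (identifying the unsplit vertices) and $g(f)=\ast$, or else one of $g(e),g(f)$ equals $\ast$. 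The key point is that $g(e)$ is then adjacent to $\ast$ in $H$, while $f_i(e)$ is an endpoint of $F_i$ and hence $g(e)$ must be an endpoint of $H$ (again by Theorem~\ref{thm: monotone maps preserve endpoints}, as $g$ is monotone by Lemma~\ref{lemma: maps from the limit are monotone}); but the vertex of $H$ over $f_i(e)$ adjacent to $\ast$ now has a nontrivial neighbour $\ast$, so it is \emph{not} an endpoint of $H$, a contradiction.

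The main obstacle is the bookkeeping in the last step: making precise that after splitting, the preimage of the endpoint $f_i(e)$ is forced by edge-preservation to be adjacent to the newly inserted vertex, and that this is incompatible with $g(e)$ being an endpoint of $H$. I would handle this by tracking the unique path structure in the tree $H$ and using that a monotone image of an endpoint is an endpoint, so that $g(e)$ must be a leaf of $H$ while simultaneously being adjacent to $\ast$; since $\ast$ has degree two in $H$ (before adding auxiliary neighbours) and lies on the former edge, the vertex over $f_i(e)$ genuinely acquires $\ast$ as a neighbour, destroying its endpoint status and delivering the contradiction.
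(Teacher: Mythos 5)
Your overall strategy (split the edge below a level where the projections of $e$ and $f$ separate, lift, and contradict) is the same as the paper's, but both of the steps that carry the actual mathematical weight have genuine gaps.

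First, your claim that $f_i(e)$ is an endpoint of $F_i$ for all large $i$ is not validly established. Theorem~\ref{thmw: splitting implies coherent} applies only to \emph{ramification} points of $|\Bbb F|$, not to regular points; and in your situation $\pi(e)$ cannot be a ramification point anyway, because $\pi^{-1}(\pi(e))\supseteq\{e,f\}$ has two elements while Lemma~\ref{lemmaw: uniqueness} forces ramification points to have singleton fibers. So the only case you actually need to exclude --- $\pi(e)$ a regular point of the dendrite --- is exactly the case your citation says nothing about. Likewise, Theorem~\ref{thm: monotone maps preserve endpoints} requires the \emph{source} graph to be an arc, so it cannot be applied to $f_i\colon\Bbb F\to F_i$. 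This is not a cosmetic problem: a graph endpoint of $\Bbb F$ can project to non-endpoints at every finite level (towers of ramification points that are never coherence witnesses converge to endpoints), so the strong asymmetric statement you want is not available. The paper only proves the weaker disjunction that, cofinitely in $n$, \emph{either} $e_n$ \emph{or} $f_n$ is an endpoint of $F_n$, and even that requires a nontrivial argument: one assumes both are interior, builds towers of coherence witnesses $a_k,b_k$ over two ramification points trapping $e_k,f_k$ on the path between them, and passes to the limit to produce an arc of $\Bbb F$ containing $e$ in its interior.

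Second, your closing contradiction is wrong. The forced configuration after lifting is $g(f)=f_i(f)$ (that vertex has a singleton preimage under $f^H_i$) and $g(e)=\ast$, since $\langle f_i(e),f_i(f)\rangle$ is no longer an edge of $H$; the alternative you entertain, $g(f)=\ast$, is impossible. More importantly, splitting the edge does \emph{not} destroy the leaf status of $f_i(e)$: in $H$ its unique neighbour is now $\ast$, so it is still an endpoint of $H$, and your assertion that it ``is not an endpoint of $H$'' is simply false --- no contradiction results. The paper's actual contradiction is via surjectivity, and it is run at a finite level rather than on $\Bbb F$: taking $\psi\colon F_n\to H$ with $\varphi\circ\psi=f^n_m$ and $e_n$ a leaf of $F_n$, edge-preservation forces $\psi(e_n)=x$, monotonicity then forces $\psi^{-1}(x)=\{e_n\}$ (the only neighbour of the leaf $e_n$ maps to $f_m\neq x$), hence the connected set $F_n\setminus\{e_n\}$ maps into the component of $H\setminus\{x\}$ containing $f_m$ and the vertex $e_m$ is never hit, contradicting that $\psi$ is an epimorphism. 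Working at the finite level matters here: for a finite tree, ``leaf plus connected fiber'' immediately pins down $\psi^{-1}(x)$, whereas on $\Bbb F$ the fiber $g^{-1}(\ast)$ is a closed connected set about which your argument says nothing.
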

\begin{proof}
	Suppose for a contradiction that there is $e\neq e'\in\Bbb F$ with $\langle e,e'\rangle\in E(\Bbb F)$. Let $F_n$ with maps $f^m_n\colon F_m\to F_n$ be a \Fraisse sequence for $\mathcal F$ and let $g_n\colon \Bbb F\to F_n$ be the canonical projections.
	Let $e_n,e'_n$ be $g_n(e)$ and $g_n(e')$ respectively. First we claim that for all but finitely many $n$, $e_n\in\End(F_n)$. Indeed suppose for a contradiction that this is not the case and let $m$ be big enough so that $e_m\neq e'_m$ and $e_m$ is not in $\End(F_m)$. Let $a_m$ be a point in $F_m$ such that $e_m$ is on the unique arc joining $a_m$ and $e'_m$. Without loss of generality we can assume that $a_m$ is a ramification point (we can for example take $a_m=e_m$) and let, for $k>m$, $a_k$ denote the vertex of $F_k$ witnessing the (weak) coherence of $f^k_m$ at $a_m$. By monotonicity of $f^k_m$ and since $\langle e_k,e'_k\rangle\in E(F_k)$, we have that for all $k>m$, $e_k$ is on the unique arc joining $a_k$ and $e'_k$. Indeed $(f^k_m)^{-1}([a_m,e'_m])$ is connected by monotonicity of $f^k_m$ and, since $F_k$ is a tree, there is a unique edge between $(f^k_m)^{-1}([a_m,e_m])$ and $(f^k_m)^{-1}([e_m,e'_m])$ which must be the edge between $e_k$ and $e_k'$. Since the unique arc from $a_k$ to $e'_k$ goes from $(f^k_m)^{-1}([a_m,e_m])$ to $(f^k_m)^{-1}([e_m,e'_m])$ it must go through this edge, showing that $e_k$ lies on the arc from $a_k$ to $e'_k$. As in the proof of arcwise density in Theorem \ref{thm: limit of G_P} we obtain a ramification point $a\in\Bbb F$ such that $e$ is contained in the arc $[a,e']$. Since $e\neq a$, as the latter is a ramification point, and $e\neq e'$ by assumption, this contradicts the fact that $e$ is an endpoint of $\Bbb F$. 

    We have thus obtained that, if $e\in\End(\mathbb F)$ and $e'\in\mathbb F$ is such that $\langle e,e'\rangle\in E(\mathbb F)$, then for all but finitely many $n\in\mathbb N$,
    $e_n$ is an endpoint of $F_n$.
    
    Now let $H$ be the tree obtained by splitting the $\langle e_m,e'_m\rangle$ edge, so that $V(H)=V(F_m)\sqcup\{x\}$ and $$E(H)=(E(F_m)\setminus\{\langle e_m,e'_m\rangle\})\cup\{\langle e_m,x\rangle,\langle x,e'_m\rangle\}$$
	and consider the epimorphism $\varphi\colon H\to F_m$ defined by $\varphi(x)=e_m$ and $\varphi$ is the identity on $H$ otherwise (when applying this result to the $\mathcal F=\dF$ case we will also need to add neighbours to $x$ until it has the correct order). By assumption we can find $n>m$ and an epimorphism $\psi\colon F_n\to H$ such that $\varphi\circ\psi=f^n_m$. Since $\psi$ respects the edge relation we must have $\psi(e_n)=x,\psi(e'_n)=e'_m$. But now, since $e_n\in\End(F_n)$ and $\psi$ is monotone, it is impossible for $e_m$ to be in the image of $\psi$, contradicting that the latter is an epimorphism.
\end{proof}

\begin{lemma} \label{lemma: endpoints of the limit}
	Let $\F$ be a projective \Fraisse family of trees with (weakly) coherent epimorphisms that allows splitting edges. Let $\langle F_i\mid i<\omega\rangle$ be a \Fraisse sequence for $\F$ and let $\Bbb F\subseteq\prod F_i$ be its projective \Fraisse limit. Let $e=(e_n)_{n\in\Bbb N}\in\Bbb F$. If there is $N\in\Bbb N$ such that for every $m\geq N$, $e_m$ is an endpoint of $F_m$, then $e\in\End(\Bbb F)$.
\end{lemma}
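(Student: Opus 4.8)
The plan is to establish a single positive statement and then read off the conclusion from it: I will show that the subgraph $\Bbb F\setminus\{e\}$ is connected, and then argue that a point with connected complement cannot be an interior point of any arc, so that $e\in\End(\Bbb F)$. Throughout, $g_m\colon\Bbb F\to F_m$ denotes the canonical projection; by Lemma \ref{lemma: maps from the limit are monotone} each $g_m$ is a monotone epimorphism, and by Theorem \ref{thm: limit is dendrite} the limit $\Bbb F$ is a dendrite (the hypotheses apply, since $\T$ consists of trees with monotone epimorphisms and allows splitting edges), hence in particular uniquely arcwise connected.

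The core of the argument is the connectedness of $\Bbb F\setminus\{e\}$. Fix $m\geq N$. Since $e_m$ is an endpoint of the finite tree $F_m$, the graph $F_m\setminus\{e_m\}$ is connected; as $g_m$ is monotone, the equivalent characterisation of monotonicity (preimages of connected sets are connected, Lemma~1.1 of \cite{A_combinatorial_model}) yields that $g_m^{-1}(F_m\setminus\{e_m\})=\Bbb F\setminus g_m^{-1}(e_m)$ is connected. These sets are nested: from $f^{m+1}_m(e_{m+1})=e_m$ (valid because $e$ is a point of the inverse limit) we get $g_{m+1}^{-1}(e_{m+1})\subseteq g_m^{-1}(e_m)$, so the connected subgraphs $\Bbb F\setminus g_m^{-1}(e_m)$ increase with $m$. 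Their union is $\Bbb F\setminus\bigcap_{m\geq N}g_m^{-1}(e_m)=\Bbb F\setminus\{e\}$, the last equality holding because a point of $\varprojlim F_i$ is determined by its coordinates. Finally, an increasing union of connected subgraphs is connected: any partition of $\Bbb F\setminus\{e\}$ into two closed, edge-separated pieces restricts to such a partition of each connected set $\Bbb F\setminus g_m^{-1}(e_m)$, forcing each of these (eventually nonempty) sets entirely into one piece, and since they are nested and exhaust $\Bbb F\setminus\{e\}$ one of the two pieces must be empty. Hence $\Bbb F\setminus\{e\}$ is connected.

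To conclude, suppose for contradiction that $e\notin\End(\Bbb F)$. By the definition of endpoint there is an arc $A$ and an embedding $A\to\Bbb F$ whose image contains $e$ as the image of a \emph{non}-endpoint; identifying $A$ with its image, $e$ is an interior vertex of $A$, so by the definition of arc (Definition \ref{defn: arcs and more}) there are $a,b\in A$ lying in distinct components of $A\setminus\{e\}$. Since $\Bbb F$ is uniquely arcwise connected, the subarc of $A$ from $a$ to $b$ is the unique arc joining them and it passes through $e$. Because an interior point of the arc joining two points of a dendrite separates those two points \cite[Section 10.1]{Nadler}, removing $e$ disconnects $a$ from $b$ in $\Bbb F$, contradicting the connectedness of $\Bbb F\setminus\{e\}$ proved above. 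Therefore $e\in\End(\Bbb F)$.

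The step I expect to be the main obstacle is the separation fact used at the very end: that an interior point of an arc in the \emph{graph} $\Bbb F$ disconnects $\Bbb F$ upon removal. The subtlety is that the underlying space of $\Bbb F$ is zero-dimensional, so all connectedness must be read combinatorially as in Definition \ref{defn: arcs and more}, and the classical dendrite statement does not apply verbatim. I plan to justify it by transporting the separation through the topological realization $\pi\colon\Bbb F\to|\Bbb F|$: the point $\pi(e)$ is interior to the arc $\pi(A)$ in the genuine dendrite $|\Bbb F|$, so $|\Bbb F|\setminus\{\pi(e)\}$ is disconnected, and pulling this partition back along the continuous map $\pi$ (together with the fact, available from the prespace structure, that equivalence classes are small enough not to bridge the two sides) yields a combinatorial disconnection of $\Bbb F\setminus\{e\}$. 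Carefully controlling the fiber $\pi^{-1}(\pi(e))$ in this pullback is the one place where the zero-dimensional, prespace nature of $\Bbb F$ must be handled with care.
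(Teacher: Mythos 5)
Your first step is correct and self-contained: each $\Bbb F\setminus g_m^{-1}(e_m)$ is the preimage of the connected set $F_m\setminus\{e_m\}$ under a projection that is monotone by Lemma \ref{lemma: maps from the limit are monotone}, these sets increase with $m$ since $g_{m+1}^{-1}(e_{m+1})\subseteq g_m^{-1}(e_m)$, their union is $\Bbb F\setminus\{e\}$, and your argument that an increasing union of connected subgraphs is connected (in the combinatorial sense) is sound. The genuine gap is in the second step, the one you yourself flag as the main obstacle: passing from ``$\Bbb F\setminus\{e\}$ is connected'' to ``$e\in\End(\Bbb F)$''. Your plan is to pull a disconnection of $|\Bbb F|\setminus\{\pi(e)\}$ back through $\pi$, and you assert that the prespace structure (classes of size at most two, via Theorem \ref{thm: transitive edges}) prevents the fiber from ``bridging the two sides''. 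That assertion is precisely what remains unproven, and it is not a formality. If $\pi^{-1}(\pi(e))=\{e,e'\}$ with $e'\neq e$, then $\Bbb F\setminus\{e\}=\pi^{-1}(P)\sqcup\pi^{-1}(Q)\sqcup\{e'\}$, and the pulled-back partition is a legitimate disconnection only if $e'$ avoids the closure of at least one of $\pi^{-1}(P)$, $\pi^{-1}(Q)$; nothing you cite rules out that $e'$ adheres to the preimage of \emph{every} component of $|\Bbb F|\setminus\{\pi(e)\}$, in which case no grouping of components into $P$ and $Q$ yields two closed pieces, and the strategy produces no contradiction. What actually closes this hole is showing that the class of $e$ is trivial, i.e.\ there is no $e'\neq e$ with $\langle e,e'\rangle\in E(\Bbb F)$. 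This is essentially Proposition \ref{prop: endpoints are alone}, but you cannot cite it as stated, since its hypothesis is $e\in\End(\Bbb F)$ --- the very conclusion you are after; you would instead have to observe that the second half of its proof (split the edge $\langle e_m,f_m\rangle$, lift along the \Fraisse sequence, and contradict monotonicity of the lift) uses only that the $e_m$ are eventually endpoints of $F_m$, which is your hypothesis. Without adding that argument the proof is incomplete. (A smaller unjustified point: that $\pi(A)$ is a topological arc with $\pi(e)$ in its interior is also not available from results stated in this paper.)

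For contrast, the paper's proof bypasses both the realization and the connectedness of $\Bbb F\setminus\{e\}$ entirely: supposing $e$ is interior to an arc $A\subseteq\Bbb F$ with endpoints $a_1,a_2$, choose $k\geq N$ with $f_k(a_1)$, $f_k(a_2)$, $e_k$ pairwise distinct; the projection restricted to $A$ is monotone, so by Theorem \ref{thm: monotone maps preserve endpoints} the image $f_k(A)$ is an arc having $f_k(a_1)$ and $f_k(a_2)$ as endpoints, while $e_k$, being an endpoint of the ambient finite tree $F_k$, is a third distinct endpoint of the subtree $f_k(A)$ --- impossible. You should either adopt that finite-level argument or patch your final step with the edge-splitting argument described above.
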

\begin{proof}
	Assume that $(e_n)_{n\in\Bbb N}$ is such that $e_m$ is an endpoint of $F_m$ for all $m\geq N$. Suppose by contradiction that $e$ is not an endpoint of $\Bbb F$ and let $A\subseteq\Bbb F$ be an arc with $e\in A$, but $e\neq a_1,a_2$, where $a_1,a_2$ are the endpoints of $A$. Let $k$ be big enough so that $e_k$ is an endpoint of $F_k$, and $f_k(a_1),\,f_k(a_2),\,f_k(e)$ are pairwise distinct. By Theorem \ref{lemma: maps from the limit are monotone} $f_k$ is monotone, so by Lemma \ref{thm: monotone maps preserve endpoints} $f_k(a_1),\,f_k(a_2)$ and $e_k$ all belong to $\End(f_k(A))$, a contradiction.
\end{proof}

\begin{lemma}
		Let $\dF$ be as above, let $\overline{u}=\langle F_i\mid i<\omega\rangle$ be a \Fraisse sequence for $\dF$ with maps $f^m_n\in\dF(F_m,F_n)$ and let $(\Bbb F,E)$ be its projective \Fraisse limit. Then $E\subseteq\End(\Bbb F)$ and $E$ is dense in $\Bbb F$.
\end{lemma}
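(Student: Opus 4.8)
The plan is to first realise $E$ concretely as a subset of $\Bbb F\subseteq\prod_i F_i$ and then treat the two assertions separately. Recall that $E$ is the direct limit $\varinjlim\End(F_i)$ taken along the embedding parts $e(f^m_n)\colon\End(F_n)\to\End(F_m)$. The class represented by $a\in\End(F_n)$ should be sent to the thread $\hat a=(\hat a_m)_m$ defined by $\hat a_m=e(f^m_n)(a)$ for $m\geq n$ and $\hat a_m=f^n_m(a)$ for $m<n$. Using $p(f^m_n)\circ e(f^m_n)=\Id_{\End(F_n)}$ together with the functoriality $e(f^{m'}_n)=e(f^{m'}_m)\circ e(f^m_n)$, one checks that $\hat a\in\Bbb F$ and that the assignment is well defined and injective on $E$. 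The point to retain is that $\hat a_m\in\End(F_m)$ for every $m\geq n$.

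First I would prove $E\subseteq\End(\Bbb F)$. For $\hat a\in E$ coming from $a\in\End(F_n)$, the coordinates $\hat a_m$ are endpoints of $F_m$ for all $m\geq n$, so Lemma \ref{lemma: endpoints of the limit} (applied to the underlying inverse system of projection parts, whose bonding maps are monotone, being weakly coherent) yields $\hat a\in\End(\Bbb F)$ at once.

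For density, note that the clopen sets $f_n^{-1}(a)$, for $n<\omega$ and $a\in F_n$, form a basis of $\Bbb F$, so it suffices to meet each of them with $E$. If $a\in\End(F_n)$ this is immediate, since $[a]\in E$ has $n$-th coordinate $a$. For a general vertex $a\in F_n$, an element of $E$ with $n$-th coordinate $a$ is exactly a point $\hat b$ with $b\in\End(F_m)$ for some $m\geq n$ and $f^m_n(b)=a$. Thus density reduces to the combinatorial claim: for every $n$ and every vertex $a\in F_n$ there are $m\geq n$ and $b\in\End(F_m)$ with $f^m_n(b)=a$. To prove this I would use the extension property of the \Fraisse sequence $\overline{u}$. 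The idea is to build an auxiliary $Y\in\dF$ together with an arrow $g\in\dF(Y,F_n)$ possessing an endpoint $y\in\End(Y)$ with $p(g)(y)=a$; the extension property then supplies $m\geq n$ and $h\in\dF(F_m,Y)$ with $g\circ h=f^m_n$, and $b:=e(h)(y)\in\End(F_m)$ satisfies $f^m_n(b)=p(g)\big(p(h)(e(h)(y))\big)=p(g)(y)=a$, as wanted. Since every vertex of $F_n$ is an endpoint (nothing to do) or a ramification point, it remains to construct $Y$ when $a$ is a ramification point of label $p$ and order $k$ with $3\le k\le p$: pick a neighbour $a_1$ of $a$, subdivide the edge $\langle a,a_1\rangle$ by a new vertex $c$, attach a pendant endpoint $y$ to $c$, and label $c$ by any element of $P$. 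Then the map $p(g)\colon Y\to F_n$ collapsing $\{c,y\}$ to $a$ and fixing everything else is a monotone, weakly coherent, label-preserving epimorphism, witnessed at $a$ by the copy of $a$ in $Y$, and taking $e(g)$ to be the inclusion of endpoints turns $g$ into an arrow of $\dF$.

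The main obstacle is precisely this construction: one must verify that $Y$ and $g$ obey all the constraints defining $\dF$ — in particular that the order bound $\ord(c)\le$ label and the label-preservation of the weak-coherence witness are respected, and that no endpoint of $F_n$ is collapsed, so that $e(g)$ exists with $p(g)\circ e(g)=\Id_{\End(F_n)}$. The subdivision trick handles uniformly the delicate case in which $a$ already has maximal order $k=p$, where one cannot simply hang a new pendant edge on a lift of $a$ without violating $\ord\le\,$label. Everything else is routine bookkeeping with the compositions $p(f\circ g)=p(f)\circ p(g)$ and $e(f\circ g)=e(g)\circ e(f)$.
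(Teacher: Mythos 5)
Your proposal is correct and follows essentially the same route as the paper: $E\subseteq\End(\Bbb F)$ via Lemma \ref{lemma: endpoints of the limit}, and density by meeting each basic clopen set $f_n^{-1}(a)$ using the \Fraisse extension property over an auxiliary tree in $\dF$ that has an endpoint mapping onto $a$, then following the embedding parts upward to produce a point of $E\cap U$. The only differences are matters of detail in your favor: you explicitly construct the auxiliary tree (subdivision plus pendant vertex, which correctly handles the $\ord=\,$label case), where the paper merely asserts such a tree exists, and you extract the endpoint of $F_m$ via $e(h)$ where the paper invokes that preimages of endpoints under weakly coherent maps contain endpoints; both variants are valid.
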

\begin{proof}
	It follows from Lemma \ref{lemma: endpoints of the limit}, that $E\subseteq\End(\Bbb F)$. Let now $U\subseteq\Bbb F$ be a nonempty open set. We want to show that $E\cap U\neq\varnothing$. Let $n\in\omega$ be sufficiently big so that there is $x\in F_n$ such that $f_n^{-1}(x)\subseteq U$. Consider any finite tree $T\in\dF$ with an epimorphism $f\in\dF(T,F_n)$ such that $p(f)^{-1}(x)$ contains an endpoint of $T$. By definition of a \Fraisse sequence we can find $m\geq n$ and $g\in\dF(F_m,T)$ such that $f\circ g=f^m_n$. Since the preimage of an endpoint through a (weakly) coherent map always contains an endpoint, this implies that there is an endpoint $y\in\End(F_m)$ with $f_m^{-1}(y)\subseteq U$. Now the point of the limit determined by the sequence $e(f^k_m)(y)$ for $k\geq m$ is an endpoint by Lemma \ref{lemma: endpoints of the limit} and is contained in both $E$ and $U$ by construction.
\end{proof}

	As a consequence of the previous two lemmas and Proposition \ref{prop: endpoints are alone}, we obtain the following corollary:
\begin{cor}
If $(\Bbb U,E)$ is the projective \Fraisse limit of $\dF$, then $\pi(E)$ is a dense set of endpoints in $|\Bbb U|\simeq W_P$, where $\pi\colon\Bbb U\to|\Bbb U|$ is the topological realization.
\end{cor}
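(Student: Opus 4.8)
The plan is to transport both conclusions of the two preceding lemmas across the topological realization $\pi\colon\Bbb U\to|\Bbb U|$. Density is the easy half: the previous lemma gives that $E$ is dense in $\Bbb U$, and $\pi$ is a continuous surjection, so for every nonempty open $V\subseteq|\Bbb U|$ the preimage $\pi^{-1}(V)$ is nonempty and open in $\Bbb U$ and hence meets $E$; therefore $V$ meets $\pi(E)$ and $\pi(E)$ is dense in $|\Bbb U|\cong W_P$.

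The real content is showing that $\pi(E)\subseteq\End(|\Bbb U|)$, i.e.\ that $\pi$ sends an endpoint $e$ of the topological graph $\Bbb U$ to an endpoint of the dendrite $|\Bbb U|$. First I would invoke Proposition \ref{prop: endpoints are alone}: since $e\in E\subseteq\End(\Bbb U)$, the $R$-class of $e$ is the singleton $\{e\}$, so $\pi^{-1}(\pi(e))=\{e\}$. Now assume toward a contradiction that $\pi(e)$ is not an endpoint of $|\Bbb U|$, so $|\Bbb U|\setminus\{\pi(e)\}$ is disconnected. Choosing $x,y$ in two distinct components and using that $|\Bbb U|$ is a dendrite, the unique arc $[x,y]$ cannot avoid $\pi(e)$, so $\pi(e)$ lies in the interior of $[x,y]$; in particular $\pi(e)\neq x,y$.

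It remains to lift this configuration to $\Bbb U$. Pick $a\in\pi^{-1}(x)$ and $b\in\pi^{-1}(y)$ and let $H$ be the unique arc from $a$ to $b$ in $\Bbb U$, which exists because $\Bbb U$ is a dendrite and hence uniquely arcwise connected. The continuous image $\pi(H)$ is a subcontinuum of $|\Bbb U|$ containing $x$ and $y$, so it contains $[x,y]$ and in particular $\pi(e)$; the singleton-fiber property $\pi^{-1}(\pi(e))=\{e\}$ then forces $e\in H$. Since $\pi(e)\neq x,y$ we have $e\neq a,b$, and $a,b$ are precisely the two endpoints of the arc $H$, so $e$ is not an endpoint of $H$. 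But then the inclusion $H\hookrightarrow\Bbb U$ is an embedding of an arc in which $e$ fails to be the image of an endpoint, contradicting $e\in\End(\Bbb U)$. Hence $\pi(e)$ is an endpoint of $|\Bbb U|$, which together with the density paragraph proves the corollary.

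I expect the lifting step to be the only delicate point: one must be sure the arc $H$ genuinely passes through $e$ as a non-endpoint vertex, and this is exactly what the combination of Proposition \ref{prop: endpoints are alone} (singleton fibers over endpoints) and unique arcwise connectedness of both $\Bbb U$ and $|\Bbb U|$ provides. Everything else is a routine transfer along the continuous surjection $\pi$.
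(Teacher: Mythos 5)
Your proof is correct and takes essentially the approach the paper intends: the corollary is stated there as an immediate consequence of Proposition \ref{prop: endpoints are alone} and the preceding lemma, and your argument just spells out the transfer along $\pi$ --- density passes through the continuous surjection, and endpointness passes through because the fiber $\pi^{-1}(\pi(e))$ is the singleton $\{e\}$, so the arc of $|\Bbb U|$ witnessing non-endpointness lifts to an arc of $\Bbb U$ containing $e$ as a non-endpoint. The one fact you use without proof --- that in the graph-dendrite $\Bbb U$ the unique arc joining $a$ and $b$ has $a,b$ as its endpoints --- is invoked at the same level of rigor by the paper itself (e.g.\ in the proofs of Proposition \ref{prop: endpoints are alone} and Lemma \ref{lemma: endpoints of the limit}), so this is not a gap.
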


The last piece we need before being able to prove the desired countable dense homogeneity result for the endpoints of $W_P$ is Theorem \ref{thm: sequence is Fraisse if limit is WP-prespace}. Intuitively this theorem will say that the embedding parts of the morphisms is not important in determining whether a sequence is \Fraisse for $\dF$. This is not surprising, since in Theorem \ref{thm: limit of dF is W_P} the embedding part of the morphisms played no role. We first need to introduce new definitions and make a few simple remarks.

\begin{defn}\label{def: center closed stuff}
	Let $X$ be a dendrite. Given any three distinct $x,y,z\in X$, their \emph{center} $C(x,y,z)$ is the unique point in $[x,y]\cap[y,z]\cap[x,z]$. A set $F\subseteq X$ is called \emph{center-closed} if $C(f_1,f_2,f_3)\in F$ whenever $f_1,f_2,f_3\in F$ are distinct. Given a finite set $A\subseteq X$, its \emph{center closure} is the smallest center-closed $B\subseteq X$ with $A\subseteq B$. Given a finite center-closed $F\subseteq X$ let, for $a\in F$, $\hat{a}_F$ denote the set of connected components of $X\setminus\{a\}$ that contain no point of $F$. Given distinct $a,b\in F$ let $C_a(b)$ be the connected component of $X\setminus\{a\}$ containing $b$, $C_b(a)$ the connected component of $X\setminus\{b\}$ containing $a$, and $C_{a,b}=C_b(a)\cap C_a(b)$. The \emph{partition associated to $F$} is then $$\Omega_F=\bigcup_{a\in F}\hat{a}_F\cup\{C_{a,b}\mid a\neq b\in F \text{ with } [a,b]\cap F=\{a,b\}\},$$
	which is exactly the set of connected components of $X\setminus F$.

    If $X$ is a graph-dendrite instead we have analogous notions, for any three distinct points $x,y,z\in X$ we define their \emph{center} $C(x,y,z)$ to be the unique point in $[x,y]\cap[y,z]\cap[x,z]$, where those are now arcs in the sense of topological graphs. The other notions are defined similarly.
\end{defn}

\begin{defn}
	An \emph{immersion} of a finite tree $A$ in a graph-dendrite $B$ is a map $f\colon A\to B$ (which will usually not be a graph homomorphism) such that for all distinct $a,b,c\in A$, $$b\in[a,c]\iff f(b)\in[f(a),f(c)].$$ In other words there exist a topological graph $A'$ obtained from $A$ by replacing every edge of $A$ with an arc (as in Definition \ref{defn: arcs and more}) and a graph embedding $f'\colon A'\to B$ with $f=\restr{f'}{A}$.
\end{defn}

	Note that the image of an immersion $f\colon A\to B$ is center-closed, so it determines a partition $\Omega_f=\Omega_{f(A)}$ of $B\setminus f(A)$ as above. There are two particular instances of immersions that will be relevant. They are described in the following examples.
	
\begin{example}\label{remark: arrows produce immersions}
	An arrow $f\in\dF(B,A)$ determines an immersion $i(f)\colon A\to B$ by setting
	$$i(f)(a)=\begin{cases}
		e(f)(a) & a\in\End(A) \\
		w(a) & \text{otherwise},
	\end{cases}$$
	where $w(a)\in B$ is the witness for the (weak) coherence of $p(f)$ at $a$.
\end{example}
\begin{example}\label{remark: Immersion in limit of sequence}
	Let $\langle F_n\mid n<\omega\rangle$ with maps $f^m_n\in\dF(F_m,F_n)$ be a sequence in $\dF$ and let $\Bbb F$ with maps $f^\infty_n\in\dF(\Bbb F,F_n)$ be its limit. We have immersions $h_n\colon F_n\to \Bbb F$ given by $$x\mapsto (p(f^n_0)(x),p(f^n_1)(x),\ldots,p(f^n_{n-1})(x),x,w^n_{n+1}(x),w^n_{n+2}(x),\ldots),$$ where $w^i_j(x)$ denotes the witness for the (weak) coherence of $f^j_i\colon F_j\to F_i$ at $x$ for $x$ a ramification point. For endpoints we follow the embeddings instead of the witnesses, that is $$x\mapsto (p(f^n_0)(x),p(f^n_1)(x),\ldots,p(f^n_{n-1})(x),x,e(f^{n+1}_n)(x),e(f^{n+2}_n)(x),\ldots),$$ when $x$ is an endpoint.
\end{example}

\begin{remark}\label{remark: from immersion to coherence}
	Let $i\colon A\to B$ be an immersion of finite trees. If $f\colon B\to A$ is a monotone map such that 
	\begin{itemize}
		\item for every $a\in A$, $f(i(a))=a$,
		\item for every $a\in A$ and every $b\in\bigcup \widehat{i(a)}_{i(A)}$, $f(b)=a$,
		\item for every $a\neq a'\in A$ with $(a,a')\in E(A)$ and every $b\in C_{i(a),i(a')}$, $f(b)\in\{a,a'\}$.  
	\end{itemize}
	Then $f$ is weakly coherent.
\end{remark}

\begin{defn}
We say that a topological graph $G$ is a \emph{$W_P$-prespace} if $G$ is a prespace, $G$ is a graph-dendrite, for every $p\in P$ the set of ramification points of order $p$ is arcwise dense in $G$ and every ramification point of $G$ has order in $P$. In particular if $G$ is a $W_P$-prespace, then $|G|=W_P$.
\end{defn}

We can now characterize \Fraisse sequences in $\dF$.

\begin{thm}\label{thm: sequence is Fraisse if limit is WP-prespace}
	A sequence $\langle F_n\mid n<\omega\rangle$ with maps $f^m_n\in\dF(F_m,F_n)$ is \Fraisse if and only if the inverse limit of $p(f^m_n)$ as a topological graph is a $W_P$-prespace.
\end{thm}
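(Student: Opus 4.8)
The statement is an equivalence, so I would prove the two directions separately. Throughout write $\Bbb F=\varprojlim p(f^m_n)$ for the inverse limit of the projection parts and $\pi\colon\Bbb F\to|\Bbb F|$ for the topological realization, which is a dendrite by Theorem~\ref{thm: limit is dendrite} since each $p(f^m_n)$ is monotone. The forward direction is essentially already done: if $\langle F_n\rangle$ is \Fraisse, then Theorem~\ref{thm: limit of dF is W_P} tells us $|\Bbb F|=W_P$, and combined with Theorem~\ref{thmw: coherent implies splitting} giving the order of each ramification point together with the arcwise-density argument carried out there, this says precisely that $\Bbb F$ is a $W_P$-prespace. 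So the content is the converse.

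**The converse.**
Assume $\Bbb F$ is a $W_P$-prespace; I must verify the two conditions of a \Fraisse sequence in $\dF$. The key conceptual point, already flagged before the theorem, is that the embedding parts $e(f^m_n)$ do not affect whether the sequence is \Fraisse; this is what Remark~\ref{remark: arrows produce immersions} and Example~\ref{remark: Immersion in limit of sequence} are set up to exploit. First I would check directedness/cofinality: given any $A\in\dF$, I must find $n$ and an arrow $F_n\to A$ in $\dF$. Since $|\Bbb F|=W_P$ contains ramification points of every order in $P$ arcwise densely, and using the immersions $h_n\colon F_n\to\Bbb F$ of Example~\ref{remark: Immersion in limit of sequence}, I can embed a copy of the tree $A$ (as an immersion) into $\Bbb F$, then push it down to some $F_n$ via the projection $p(f^\infty_n)$ for $n$ large enough that the projection is injective on the finitely many relevant points and preserves the tree structure. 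The monotone projection then yields the weakly coherent epimorphism $p(f^\infty_n)\colon F_n\to A$ (this uses Remark~\ref{remark: from immersion to coherence} to promote the immersion data into a weak-coherence certificate), and the embedding part $e$ is filled in arbitrarily on endpoints, which is harmless.

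**The amalgamation/extension condition.**
The second, harder condition requires: given $f\in\dF(Y,F_n)$, produce $m\geq n$ and $g\in\dF(F_m,Y)$ with $f\circ g=f^m_n$. Here I would again work through immersions. The arrow $f$ gives an immersion $i(f)\colon F_n\to Y$, and I want to realize a copy of $Y$ inside the limit $\Bbb F$ extending the already-present copy of $F_n$, then project it to some $F_m$. Concretely, each vertex of $Y$ above a ramification point of $F_n$ must be placed at a ramification point of the correct label, and the new ramification points and endpoints of $Y$ not lying over $F_n$'s vertices must be found in $\Bbb F$: this is exactly where arcwise density of order-$p$ ramification points (the $W_P$-prespace hypothesis) is used, to locate a point of each prescribed order in each prescribed arc of $\Bbb F$ determined by the partition $\Omega_{i(f)(F_n)}$. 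Having immersed $Y$ into $\Bbb F$ compatibly with $F_n$, I project to a finite stage $F_m$ (for $m$ large, so the projection is injective on the finite vertex set of the immersed $Y$ and respects the relevant arc-structure), obtaining $g=p(f^\infty_m)$ restricted appropriately; the commutation $f\circ g=f^m_n$ holds because both sides agree with the projection $p(f^\infty_n)$, and Remark~\ref{remark: from immersion to coherence} certifies weak coherence of $g$. The embedding part $e(g)$ is recovered from the endpoint data of the immersion.

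**Main obstacle.**
The delicate step is the last one: realizing an arbitrary finite labeled tree $Y$ as an immersion in $\Bbb F$ \emph{extending} the given immersion of $F_n$, while matching labels (orders) exactly at ramification points. Arcwise density of order-$p$ ramification points gives points of each order in each arc, but one must choose them \emph{simultaneously} and in the correct combinatorial arrangement dictated by the partition $\Omega_{i(f)(F_n)}$, and then argue that projecting to a sufficiently late finite stage preserves all the incidence, order, and betweenness data faithfully. Controlling that a single large $m$ works for all finitely many chosen points at once — so that $p(f^\infty_m)$ is an injection on them inducing an isomorphism of the immersed tree onto its image and a weakly coherent epimorphism with the right labels — is where the real care is needed; the rest is bookkeeping with the embedding parts, which by design never obstructs anything.
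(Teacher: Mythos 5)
Your proposal is correct and follows essentially the same route as the paper's proof: the forward direction is delegated to Theorem \ref{thm: limit of dF is W_P}, and for the converse you extend the immersion $h_n\colon F_n\to\Bbb F$ of Example \ref{remark: Immersion in limit of sequence} through the immersion $i(f)\colon F_n\to Y$ to an immersion of $Y$ into $\Bbb F$ using the $W_P$-prespace hypothesis, project to a sufficiently late stage $F_m$, and convert the resulting immersion into a weakly coherent epimorphism via Remark \ref{remark: from immersion to coherence}, with the embedding parts filled in harmlessly --- exactly the paper's argument. The only differences are cosmetic: the paper resolves the commutation $f\circ g=f^m_n$ by an explicit case split over the pieces of the partition associated to the immersed copy of $Y$ (on a component $C_{a,a'}$ one chooses the value $a$ or $a'$ according to $f^m_n$), where you state it a bit more loosely, and you additionally verify the cofinality condition directly, which the paper leaves implicit (it follows from the extension property together with directedness of $\dF$).
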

\begin{proof}
	The if part of the statement was already proved in Theorem \ref{thm: limit of dF is W_P}. Suppose that $\langle F_n\mid n<\omega\rangle$ with epimorphisms $f^m_n\in\dF(F_m,F_n)$ is a sequence such that 
	$$\Bbb F=\varprojlim\left(\cdots\to F_{n+1}\xrightarrow{p\left(f^{n+1}_{n}\right)} F_n\xrightarrow{p\left(f^{n}_{n-1}\right)} \cdots\xrightarrow{p\left(f^1_0\right)} F_0\right)$$
	is a $W_P$-prespace. We want to show that $\langle F_n\mid <\omega\rangle$ is a \Fraisse sequence for $\dF$. In other words given $A\in\dF$ and an epimorphism $g\in\dF(A,F_n)$, we want to find $m\geq n$ and an epimorphism $h\in\dF(F_m,A)$ such that $g\circ h=f^m_n$. As in Example \ref{remark: Immersion in limit of sequence} we obtain an immersion $h_n\colon F_n\to\Bbb F$. Moreover we also have an immersion $j\colon F_n\to A$ as in Example \ref{remark: arrows produce immersions}. Using the fact that $\Bbb F$ is a $W_P$-prespace we can find an immersion $k\colon A\to\Bbb F$ such that $k\circ j=h_n$. We construct $k$ inductively on elements of $A$. There is only one choice for the value of $k$ on $j(F_n)$ so that $k\circ j=h_n$. Enumerate $A\setminus j(F_n)$ as $\{a_1,\ldots,a_r\}$. We define $k(a_1)$ to be any point of $\Bbb F$ with $\ord(a_1)=\ord(k(a_1))$ so that $k$ is an isomorphism of trees with the betweenness relation from $j(F_n)\cup\{a_1\}$ to $h_n(A)\cup\{k(a_1)\}$. There exists such a point $k(a_1)\in\Bbb F$ because $\Bbb F$ is a $W_P$-prespace. Proceeding inductively in a similar manner, we define $k(a_i)$ for all $i\leq r$. Now we can find $m\geq n$ such that, calling $f^\infty_m\in\dF(\Bbb F,F_m)$ the canonical projection, all vertices in $k(A)$ have distinct images through $f^\infty_m$. We check that $i=p(f^\infty_m)\circ k$ is an immersion of $A$ in $F_m$. If $a,b,c\in A$ are such that $b\in[a,c]$, then $k(b)\in[k(a),k(c)]$ because $k$ is an immersion, and $i(b)\in[i(a),i(c)]$ because $p(f^\infty_m)$ is monotone so it must map arcs to arcs. Conversely suppose for a contradiction that $i(b)\in[i(a),i(c)]$ but $b\not\in[a,c]$. Let $d\in[a,c]$ be the only point of $[a,c]$ which belong to the shortest arc from $b$ to $[a,c]$. If $d=a$, then $a\in[b,c]$, which, arguing as in the previous implication, implies $k(a)\in[k(b),k(c)]$ and so $i(a)\in[i(b),i(c)]$, a contradiction. If $d=c$ we reach a contradiction in the same manner, which only leaves the case $d\in(a,c)$. But in the latter case we have $k(d)\in[k(a),k(c)]$ and $k(b)$ is in a component of $\widehat{k(d)}$ that contains neither $k(a)$ nor $k(c)$, since $k$ is an immersion. By assumption $p(f^\infty_m)(a),p(f^\infty_m)(b),p(f^\infty_m)(c)$ and $p(f^\infty_m)(d)$ are pairwise distinct, so we must have that $i(d)\in(i(a),i(c))$, since $p(f^\infty_m)$ maps arcs to arcs, but we must also have that $i(b)$ is in a component of $\widehat{i(d)}$ containing neither $i(a)$ nor $i(c)$, a contradiction. 

 We can now define a weakly coherent map from $F_m$ to $A$, we only need to be careful and make sure that it makes the appropriate diagram commute. Explicitly we define $h\in\dF(F_m,A)$ as follows. Let $x\in F_m$. If $x=i(a)$ for some $a\in A$, then $p(h)(x)=a$. If $x\in \widehat{a}_{i(A)}$ (as defined in Definition \ref{def: center closed stuff}) for some $a\in A$, then $p(h)(x)=a$. If $x\in C_{i(a),i(a')}$ for some $a,a'\in A$ with $(a,a')\in E(A)$, then we check whether $p(g)(a)=f^m_n(x)$ or $p(g)(a')=f^m_n(x)$ and define $p(h)(x)$ accordingly. Since the map we just defined is monotone, it must also be weakly coherent by Remark \ref{remark: from immersion to coherence}. For the embedding part of $h$ consider $a\in\End(A)$. If $a\in e(g)(\End(F_n))$ set $e(h)(a)=e(f^m_n)(b)$, where $b\in\End(F_n)$ is such that $e(g)(b)=a$. If instead $a$ is not in the image of $e(g)$, then $e(h)(a)$ can be any endpoint in $p(h)^{-1}(a)$.
 Note that $p(h)\circ e(h)(a)=a$ for any $a\in \End(A)$ and  $g\circ h= f^m_n$.
\end{proof}

\noindent We conclude by proving the following theorem, from which Theorem \ref{thm: CDH} follows immediately by the uniqueness of \Fraisse limits.

\begin{thm}\label{thm: limit of dF with countable dense set of endpoints}
    Let $P\subseteq\{3,4,\ldots,\omega\}$ and let $Q\subseteq \End(W_P)$ be a countable dense set of endpoints. Then $\dF$ has a \Fraisse sequence $\overline{u}=\langle F_i\mid i<\omega\rangle$ with morphisms $f^m_n\in\dF(F_m,F_n)$ such that if $(\Bbb U,D)$ is its \Fraisse limit, then $\pi(D)=Q$, where $\pi\colon \Bbb U\to |\Bbb U|\cong W_P$ is the topological realization.
\end{thm}
\begin{proof}
Enumerate $Q$ without repetitions as  $\{q_1,q_2,q_3,\ldots\}$. Let $A_0=\{q_1,q_2,q_3,c\}$ where $c=C(q_1,q_2,q_3)$ and, for all $0<n<\omega$, let $A_n$ be the center closure of $\{q_1,\ldots,q_{k_n}\}$, where $(k_n)_{n<\omega}$ is an increasing sequence chosen so that $|A_{n+1}\cap(a,a')|\geq 2$, whenever $a,a'\in A_n$ are such that $[a,a']\cap A_n=\{a,a'\}$, which is possible since $Q$ is dense. We now build a finite graph $F_n$ with a vertex $v_a$ for every $a\in A_n$, and an edge $\langle v_a,v_{a'}\rangle$ iff $[a,a']\cap A_n=\{a,a'\}$. We label every $v_c\in F_{n}$ that comes from a ramification point $c\in A_{n}$ with the order of $c$ in $W_P$. It only remains to define morphisms $f^{n+1}_n\in\dF(F_{n+1},F_n)$. Since we have $A_n\subseteq A_{n+1}$, we also get an immersion $i\colon F_n\to F_{n+1}$ that maps $v_a\in F_n$ to $v_a\in F_{n+1}$, for every $a\in A_n$. The embedding part of $f^{n+1}_n$ is exactly $\restr{i}{\End(F_n)}$. For the projection part for all pairs $a,a'\in A_n$ with $[a,a']\cap A_n=\{a,a'\}$, we partition $C_{v_{a},v_{a'}}\cup\{v_{a},v_{a'}\}\subseteq F_{n+1}$ into two sets $H,H'$ such that 
\begin{enumerate}
\item $v_a\in H$, $v_{a'}\in H'$,
\item $H,H'$ are connected,
\item $|H|,|H'|\geq 2$.
\end{enumerate} 
Note that the last condition is easily satisfied since we assumed that $|(a,a')\cap A_{n+1}|\geq 2$. We then define $p(f^{n+1}_n)(H)=v_a$ and $p(f^{n+1}_n)(H')=v_{a'}$. By construction $p(f^{n+1}_n)$ is monotone and satisfies the hypothesis of Remark \ref{remark: from immersion to coherence}, so it must be weakly coherent.

Thanks to Theorem \ref{thm: transitive edges} and the condition $|(a,a')\cap A_{n+1}|\geq 2$ the limit of this sequence is a prespace. Since $Q$ is dense we have that $\bigcup_n A_n=\mathrm{Br}(W_P)\cup Q$. Using this fact it is easy to check that the limit of this sequence is a $W_P$-prespace, so that the sequence is \Fraisse by Theorem \ref{thm: sequence is Fraisse if limit is WP-prespace}. 
Calling $(\Bbb U,D)$ its limit, we now only need to identify $(|\Bbb U|,\pi(D))$ with $(W_P,Q)$. To any $a\in A_{n+1}\setminus A_n$ we have associated the point $$r_a=(p(f^{n+1}_0)(v_a),\ldots,p(f^{n+1}_{n-1})(v_a),p(f^{n+1}_n)(v_a),v_a,\ldots,v_a,v_a,\ldots)$$ of $\Bbb U$, which is a ramification point by Theorem \ref{thmw: coherent implies splitting} if $a$ is a ramification point of $A_{n+1}$, and which is an endpoint if $a$ is an endpoint of $A_{n+1}$. We define a continuous function $h\colon \mathrm{Br}(W_P)\cup Q\to|\Bbb U|$ by setting $h(a)=\pi(r_a)$ for every $a\in\bigcup_n A_n$. To verify the continuity of $h$ note that if $x_n\to x$ is a convergent sequence of points in the domain of $h$ whose limit is also in the domain of $h$, then for any $m\in\mathbb N$ we can find $N\in\mathbb N$ big enough so that $A_m$ doesn't separate $x_k$ and $x$, for any $k\geq N$. In other words $r_{x_k}$ and $r_x$ agree in at least the first $m$ coordinates, which implies that $r_{x_k}\to r_x$, since $m$ was arbitrary. This shows that the function $a\mapsto r_a$ is continuous, from which the continuity of $h$ follows immediately, since $h$ is obtained by composition of that function with $\pi$, which is continuous. Note that the image of $h$ is dense in $|\Bbb U|$ (in particular, any ramification point of $|\Bbb U|$ is in the image), $h$ is injective, and $h$ preserves the betweenness relation. We will now extend $h$ to a continuous function $\tilde h\colon W_P\to|\Bbb U|$ and then verify that $\tilde h$ is injective and surjective. For $x\in W_P$, let $$\osc(h,x)=\inf\{\diam h(U)\mid U\text{ is an open neighbourhood of }x\},$$ where $h(U)$ means $h(U\cap(\mathrm{Br}(W_P)\cup Q))$. By the Kuratowski's extension theorem (see Theorem 3.8 and its proof in \cite{Kechris}), in order to extend the continuous function $h$ to a continuous function $\tilde h\colon W_P\to|\Bbb U|$, it suffices to show that $\osc(h,x)=0$ for every $x\in\End(W_P)\setminus Q$ or when $x$ is a regular point.

\begin{itemize}
    \item In the first case let $(x_i)_{i<\omega}$ be a sequence in $\mathrm{Br}(W_P)$ with $x_{i+1}\in[x_i,x]$ for every $i$, $x_i\to x$ and, for every $i<\omega$, let $C_{x_i}(x)$ be the component of $W_P\setminus\{x_i\}$ containing $x$. Then $\{x\}=\bigcap_{i<\omega}C_{x_i}(x)$ and we have $\bigcap_{i<\omega}h(C_{x_i}(x))=\varnothing$. Indeed if $y\in\mathrm{Br}(W_P)\cup Q$ and $w$ is the unique point on $[y,x_0]\cap[y,x]\cap[x_0,x]$, then there is $i$ big enough so that $w\not\in C_{x_i}(x)$, so that $y\not\in C_{x_i}(x)$ and, since $h$ preserves the betweenness relation, $h(y)\not\in C_{x_i}(x)$. Since $C_{x_{i+1}}(x)\subseteq C_{x_i}(x)$ for every $i$ (because $x_{i+1}\in [x_i,x]$), this shows that $\bigcap_{i<\omega}h(C_{x_i}(x))=\varnothing$. Therefore  by compactness of $W_P$ we have $\diam(h(C_{x_i}(x) ))\to 0$ and hence 
    $\osc(h,x)=0$.
    
    \item If $\ord(x)=2$ let $(y_i)_{i<\omega}$ and $(z_i)_{i<\omega}$ be two sequences in $\mathrm{Br}(W_P)$ with $y_{i+1}\in[y_i,x]$, $y_i\to x$, $z_{i+1}\in[x,z_i]$, $z_i\to x$ and $x\in [y_i,z_i]$ for every $i<\omega$. Letting $C_{y_i,z_i}(x)$ denote the component of $X\setminus\{y_i,z_i\}$ containing $x$, we can argue as in the previous case to obtain that $\{x\}=\bigcap_{i<\omega}C_{y_i,z_i}(x)$ and that $\bigcap_{i<\omega}h(C_{y_i,z_i}(x))=\varnothing$. Similarly to the previous case, since $C_{y_{i+1},z_{i+1}}(x)\subseteq C_{y_i,z_i}(x)$ for every $i$, this shows that $\osc(h,x)=0$.
\end{itemize}  

As discussed above we can now extend $h$ to a continuous function $\tilde h\colon W_P\to |\Bbb U|$. 
Note that $\tilde h$ preserves the betweenness relation. Indeed, the betweenness relation $\{(x,y,z)\in W_P^3 \mid x\in[y,z]\}$ is closed in $W_P^3$. Therefore if $x_n\to x$, $y_n\to y$, $z_n\to z$, $x_n\in [y_n,z_n]$, $x\in[y,z]$,
$x_n, y_n, z_n\in \mathrm{Br}(W_P)$, it holds $\tilde h(x_n)\in [\tilde h(y_n), \tilde h(z_n)]$, and by continuity of $\tilde h$, we get $\tilde h(x)\in [\tilde h(y), \tilde h(z)]$.

Since $\tilde h(W_P)\supseteq \overline{h(\mathrm{Br}(W_P)\cup Q)}=|\Bbb U|$, $\tilde h$ must be surjective. 

To show injectivity of $\tilde h$ take $x\neq y\in W_P$ such that none  of the $x,y$ is  a ramification point. Since ramification points are arcwise dense in $W_P$, pick $t\in \mathrm{Br}(W_P)$ with $t\in [x,y]$. Then $\tilde h(t)\in [\tilde h(x), \tilde h(y)]$ and since none of the $\tilde h(x), \tilde h(y)$ is a ramification point, we obtain $\tilde h(t)\neq \tilde h(x), \tilde h(y)$. Therefore $\tilde h(x)\neq \tilde h(y)$.

Moreover we have $\tilde{h}(Q)=\pi(D)$ by construction, since $\tilde h(Q)=h(Q)$, which concludes the proof.
\end{proof}

\section*{Acknowledgement}
We would like to thank the referee for carefully reading the article and for a number of suggestions on the presentation of the results.

\normalsize




\normalsize
\baselineskip=17pt


\printbibliography

\end{document}